\def\beq{\begin{equation}}
\def\eeq{\end{equation}}
\def\ba{\begin{array}}
\def\ea{\end{array}}
\def\R{\mathbb R}
\def\N{\mathbb N}
\newcommand{\T}{{\mathcal T}}
\newtheorem{thm}{Theorem}[section]
\newtheorem{lm}[thm]{Lemma}
\newtheorem{crl}[thm]{Corollary}
\theoremstyle{definition}
\newtheorem{rem}[thm]{Remark}
\theoremstyle{remark}
\begin{document}
\pagestyle{plain}
\title{The ground state solutions of nonlinear Schr\"{o}dinger equations with Hardy weights on lattice graphs}

\author{Lidan Wang}
\email{wanglidan@fudan.edu.cn}
\address{Lidan Wang: School of Mathematical Sciences, Fudan University, Shanghai 200433, China}


\begin{abstract}
In this paper, we study the nonlinear Schr\"{o}dinger equation
$$
-\Delta u+(V(x)- \frac{\rho}{(|x|^2+1)})u=f(x,u)
$$
on the lattice graph $\mathbb{Z}^N$ with $N\geq 3$, where $V$ is a bounded periodic potential and $0$
lies in a spectral gap of the Schr\"{o}dinger operator $-\Delta+V$. Under some assumptions on the nonlinearity $f$,  we prove the existence and asymptotic  behavior of ground state solutions with small $\rho\geq 0$ by the generalized linking theorem.

\end{abstract}

 \maketitle

{\bf Keywords:} Schr\"{o}dinger equation, Spectral gap, Ground state solutions, Lattice graphs.


\section{Introduction}
The nonlinear Schr\"{o}dinger equation 
\begin{eqnarray*}\label{0}
-\Delta u+(V(x)-\frac{\rho}{|x|^2})u=f(x,u),\quad x\in\mathbb{R}^N
\end{eqnarray*}
has drawn a great deal of interest in recent years. In particular, for $\rho=0$, there is a broad literature treating the Schr\"{o}dinger equation with periodic potential. For example, when the operator $-\Delta+V$ is positive definite, Pankov \cite{P1} proved an existence result by the Nehari variational principle and concentration
compactness methods. (Even more general asymptotically periodic case
was treated in that paper). Later, Rabinowitz \cite{RH0} obtained the
existence of nontrivial solutions under less restrictive assumptions on the
nonlinearity $f$. Moreover, in \cite{LWZ}, the authors established the ground state solutions  under a more natural super-quadratic condition (see (F4) below). When $0$ lies in a
finite spectral gap and the operator $-\Delta+V$ is not positive definite,
the first existence results (under very strong assumptions on the nonlinearity)
were found in \cite{AL,J}. Later, Troestler and Willem \cite{TW} and
Kryszewski and Szulkin \cite{KS} proved the existence of nontrivial
solutions under much more natural conditions. Pankov \cite{P} demonstrated the existence of ground state solutions by the Nehari manifold method to the case of strongly indefinite functionals. Moreover, Szulkin and Weth \cite{SW} obtained the ground state solutions based on a direct and simple reduction of the indefinite variational problem to a definite one. After that, Liu \cite{L} improved the result of Szulkin and Weth \cite{SW} under a weaker monotonicity condition on $f$. Recently, for $\rho>0$, Guo and Mederski \cite{GM} studied the existence and behavior of ground state solutions under some conditions on $f$. Later, the authors in \cite{LHT} also established the existence and asymptotical behavior of ground state solutions under different assumptions on $f$. For more related results, we refer readers to \cite{CY,CT,JT,LLT,RH,RW,WZ} and the references therein.

Nowadays, more and more researchers  turn to study  differential equations on graphs, especially for the nonlinear Schr\"{o}ldinger equations. For example, a class of Schr\"{o}dinger equations with the nonlinearity of power type have been studied on graphs, see \cite{GJ,GLY1,GLY2,HSZ,ZZ}. In addition, the existence or multiplicity of gap solitons (then the associated energy functional is strongly indefinite) of periodic discrete  Schr\"{o}ldinger equation on the lattice graph $\mathbb{Z}$ has been extensively investigated. For example,
 Pankov \cite{P2} obtained the existence of nontrivial solutions
by a generalized linking theorem due to \cite{KS}. Pankov \cite{P3} also obtained
the existence of ground state solutions by a generalized Nehari manifold and
periodic approximation technique. Later, Chen and Ma \cite{CM2} proved the existence of ground state solitons and
the existence of infinitely many pairs of geometrically distinct solitons by the generalized Nehari manifold method
developed by Szulkin and Weth \cite{SW}. Moreover, Chen and Ma \cite{CM1,CM3} established the existence of nontrivial solutions with
asymptotically or super linear terms by a variant generalized weak linking
theorem.  For related works, we refer readers to \cite{LZY,S,SM,SZ,YCD}.
 
As far as we know, there is no existence results for the Schr\"{o}dinger equation with hardy potential on the lattice graph $\mathbb{Z}^N$, which is a natural discrete model for the Euclidean space.  Motivated by the works mentioned above, in this paper, we prove the existence and asymptotical behavior of ground state solutions for a class of strongly indefinite problems with hardy weights on $\mathbb{Z}^N$ with $N\geq3$ by following the arguments in \cite{GM,M}.

Let $\Omega$ be a subset of $\mathbb{Z}^N$, we denote by $C(\Omega)$ the space of real-valued functions on $\Omega$. The support of $u\in C(\Omega)$ is
defined as $\text{supp}(u):=\{x \in \Omega : u(x)\neq 0\}$. Moreover, we denote by the $\ell^p(\Omega)$ the space of $\ell^p-$summable functions on $\Omega$. For convenience, for any $u\in C(\Omega)$, we always write
$
\int_{\Omega}u\,d\mu:=\sum\limits_{x\in \Omega}u(x),$ where $\mu$ is the counting measure in $\Omega$.

In this paper, we study the nonlinear Schr\"{o}dinger equation
\begin{equation}\label{1.1}
-\Delta u+(V(x)- \frac{\rho}{(|x|^2+1)})u=f(x,u)
\end{equation}
on the lattice graph $\mathbb{Z}^N$ with $N\geq 3$.  Here the operator $\Delta$ is the discrete Laplacian defined as $\Delta u(x)=\underset {y\sim x}{\sum}(u(y)-u(x))$.
We always assume that 
\begin{enumerate}
\item[(H):] $V\in L^{\infty}(\mathbb{Z}^N)$, $V$ is $T$-periodic with $T\in \mathbb{Z}^N$ and $$\sigma^-:=\sup~[\sigma(-\Delta+V)\cap(-\infty,0)]<0<\sigma^+:=\inf~[\sigma(-\Delta+V)\cap(0,+\infty)],$$
where $\sigma(-\Delta+V)$ is the spectrum of the operator $-\Delta+V$ in $\ell^2(\mathbb{Z}^N).$
\item[(F1):] $f:\mathbb{Z}^N\times \R\rightarrow \R$ is $T$-periodic in $x$ and continuous in $u\in\R$;

\item[(F2):] There are constants $a>0$ and $p>2$ such that
$$
|f(x,u)|\leq a(1+|u|^{p-1}), \quad (x,u)\in \mathbb{Z}^N\times \R;
$$
\item[(F3):]  $f(x,u)=o(u)$ uniformly in $x$ as $|u|\rightarrow 0$;
\item[(F4):] $\frac{F(x,u)}{u^2}\rightarrow+\infty$ uniformly in $x$ as $|u|\rightarrow +\infty$ with $F(x,u)=\int_{0}^{u}f(x,t)\,dt$;
\item[(F5):] $u\mapsto \frac{f(x,u)}{|u|}$ is non-decreasing  on $(-\infty, 0)$ and $(0, +\infty)$;
\item[(F6):] $f(x,u)$ is of $C^1$ class about $u\in\R$ and satisfies
$$
f(x,u)u-2F(x,u)\geq b|u|^q, \quad (x,u)\in \mathbb{Z}^N\times \R,
$$
where $b>0$ and $2<q\leq p$.
\end{enumerate}
Clearly, by (F1), (F2) and (F3), for any $\varepsilon>0$, there exists $c_\varepsilon>0$ such that
\begin{equation}\label{nn}
|f(x,u)|\leq\varepsilon|u|+c_\varepsilon|u|^{p-1},\quad (x,u)\in \mathbb{Z}^N\times \R.
\end{equation}
Moreover, by (F3) and (F5), we have that
\begin{equation}\label{30}
f(x,u)u\geq 2 F(x, u)\geq 0, \quad (x,u)\in \mathbb{Z}^N\times \R.
\end{equation}

Denote $A:=-\Delta+V$ and $X:=\ell^2(\mathbb{Z}^N).$ Then the energy functional of (\ref{1.1}) is
\begin{eqnarray*}
J_\rho(u)=\frac{1}{2}(Au,u)_2-\frac{1}{2}\int_{\mathbb{Z}^N}\frac{\rho}{(|x|^2+1)}|u|^2\,d\mu-\int_{\mathbb{Z}^N}F(x,u)\,d\mu,
\end{eqnarray*}
where $(\cdot,\cdot)_2$ is the inner product in $X$.
The corresponding norm in $X$ is denoted by $\|\cdot\|_2$. Then $J_\rho(u)\in C^1(X,\R)$ and the Gateaux derivative is given by
\begin{eqnarray*}
\langle J'_\rho(u),\phi\rangle=(Au,\phi)_2-\int_{\mathbb{Z}^N}\frac{\rho}{(|x|^2+1)}u\phi\,d\mu-\int_{\mathbb{Z}^N}f(x,u)\phi\,d\mu,\quad u,\phi\in X.
\end{eqnarray*}

By (H), we have the decomposition $X=X^+\oplus X^-$, where $X^+$ and $ X^-$ are the positive and negative spectral subspaces of $A$ in $X$.  Then we have that
\begin{equation*}
(Au,u)_2\geq \sigma^+\|u\|^2_2,\quad u\in X^+,\qquad \text{and}\qquad
-(Au,u)_2\geq -\sigma^-\|u\|^2_2,\quad u\in X^-.
\end{equation*}
Hence the
form $(Au,u)_2$ is positive definite on $X^+$ and negative definite on $X^-$.

For any $u,v\in X=X^+\oplus X^-$, $u=u^++u^-$ and $v=v^++v^-$, we define an equivalent inner product $(\cdot, \cdot)$ and the corresponding norm $\|\cdot\|$ on $X$ by 
$$
(u, v)=(Au^+,v^+)_2-(Au^-,v^-)_2\qquad \text{and}\qquad
\| u \|=(u, u)^{\frac{1}{2}},
$$
respectively. Clearly, the decomposition $X=X^+\oplus X^-$ is orthogonal with respect to both inner products 
 $(\cdot,\cdot)$ and $(\cdot,\cdot)_2$. Therefore, the energy functional $J_\rho$ and the corresponding Gateaux derivative can be rewritten as
\begin{eqnarray*}
J_\rho(u)=\frac{1}{2}\|u^+\|^2-\frac{1}{2}\|u^-\|^2-\frac{1}{2}\int_{\mathbb{Z}^N}\frac{\rho}{(|x|^2+1)}|u|^2\,d\mu-\int_{\mathbb{Z}^N}F(x,u)\,d\mu.
\end{eqnarray*}
and
\begin{eqnarray*}
\langle J'_\rho(u),\phi\rangle=(u^+,\phi)-(u^-,\phi)-\int_{\mathbb{Z}^N}\frac{\rho}{(|x|^2+1)}u\phi\,d\mu-\int_{\mathbb{Z}^N}f(x,u)\phi\,d\mu,\quad u,\phi\in X,
\end{eqnarray*}
respectively.

We say that $u\in X$ is a solution of (\ref{1.1}), if $u$ is a critical point of the energy functional $J_{\rho}$, i.e. $J'_{\rho}(u)=0$.
A ground state solution of (\ref{1.1}) means that
$u$ is a nontrivial critical point of $J_\rho$ with the least energy, that is,
$$
J_\rho(u)=\inf\limits_{N_\rho} J_\rho>0,
$$
where $$N_{\rho}=\{  u\in X \backslash X^-: \langle J'_\rho(u),u\rangle=0 \text{ and } \langle J'_\rho(u),v\rangle=0  \text{ for}\,  v\in  X^-\}$$
is the Nehari manifold.



Denote
\begin{equation}\label{73}
\rho^+:=\sup\,\{M>0:  (Au,u)_2\geq  M\int_{\mathbb{Z}^N}|\nabla u|^2 \,d\mu,\quad u\in X^+\}.
\end{equation}
Since $|\nabla u(x)|^2=\frac{1}{2}\underset {y\sim x}{\sum}(u(y)-u(x))^{2}$, by an elementary inequality $(a+b)^{2}\leq 2(a^2+b^2)$, one gets easily that$$\int_{\mathbb{Z}^N}|\nabla u|^2\,d\mu\leq C_N\| u \|^2_2.$$ Note that for $u\in X^+$, $(Au,u)_{2}$ is positive definite, then 
$\rho^+>0$. Let $\tilde{\rho}^+=\min \{  \rho^+, 1  \}$ and $\kappa>0$ be the constant in Lemma \ref{04} below. Now we state our first main result of this paper.

\begin{thm}\label{thm-1.1}

Let $0\leq \rho <  \frac{\tilde{\rho}^+}{\kappa}$. Assume that (H) and (F1)-(F5) hold.
Then the equation (\ref{1.1}) has a ground state solution.
\end{thm}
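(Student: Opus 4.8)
The plan is to treat \eqref{1.1} as a strongly indefinite variational problem and use the generalized Nehari manifold method of Szulkin--Weth \cite{SW} (with the weaker monotonicity (F5), in the form due to Liu \cite{L}), together with a comparison between $J_\rho$ and the periodic functional $J_0$ in the spirit of Guo--Mederski \cite{GM}; the Palais--Smale sequence at the ground state level could equivalently be produced by the generalized linking theorem of Kryszewski--Szulkin \cite{KS}. The hypothesis $0\le\rho<\tilde{\rho}^+/\kappa$ enters only through nondegeneracy of the quadratic part of $J_\rho$: by the discrete Hardy inequality (Lemma \ref{04}) and \eqref{73}, for $u\in X^+$ one has $\int_{\mathbb{Z}^N}\frac{\rho}{|x|^2+1}|u|^2\,d\mu\le\rho\kappa\int_{\mathbb{Z}^N}|\nabla u|^2\,d\mu\le\frac{\rho\kappa}{\rho^+}\|u\|^2$, so the form $q_\rho(u):=\|u\|^2-\int_{\mathbb{Z}^N}\frac{\rho}{|x|^2+1}|u|^2\,d\mu$ satisfies $q_\rho\ge\theta\|\cdot\|^2$ on $X^+$ with $\theta:=1-\rho\kappa/\rho^+\in(0,1]$, while on $X^-$ the quadratic part is trivially negative definite. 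Combined with \eqref{nn}, \eqref{30} and the elementary bound $\|u\|_p\le\|u\|_2$ ($p\ge2$) on $\mathbb{Z}^N$, this yields the linking geometry ($J_\rho\ge\delta>0$ on a small sphere of $X^+$, $J_\rho\le0$ on $X^-$, $J_\rho\to-\infty$ on each $\mathbb{R}^+u^+\oplus X^-$ by (F4)). I will also use that multiplication by $\frac{1}{|x|^2+1}$ is compact and self-adjoint on $\ell^2(\mathbb{Z}^N)$ --- being the operator-norm limit of its finite-rank truncations --- so $u\mapsto\int_{\mathbb{Z}^N}\frac{\rho}{|x|^2+1}|u|^2\,d\mu$ is weakly continuous.

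Next I set up the Nehari reduction. Write $J_\rho(u)=\frac12\|u^+\|^2-\frac12\|u^-\|^2-\Psi_\rho(u)$ with $\Psi_\rho(u)=\frac12\int_{\mathbb{Z}^N}\frac{\rho}{|x|^2+1}|u|^2\,d\mu+\int_{\mathbb{Z}^N}F(x,u)\,d\mu\ge0$; since the Hardy part of $\Psi_\rho$ is a nonnegative, hence convex, quadratic functional, $\Psi_\rho$ still satisfies the structural requirements of \cite{SW,L}. One then checks in the standard way that for every $u\in X\setminus X^-$ the restriction of $J_\rho$ to $\widehat E_u:=\mathbb{R}^+u^+\oplus X^-$ has a unique maximizer $\widehat m_\rho(u)\in N_\rho$, that $\widehat m_\rho$ is a homeomorphism of the unit sphere $S^+$ of $X^+$ onto $N_\rho$, and that
\[
c_\rho:=\inf_{N_\rho}J_\rho=\inf_{u\in S^+}J_\rho\big(\widehat m_\rho(u)\big)\ge\delta>0 .
\]
Applying Ekeland's principle to $u\mapsto J_\rho(\widehat m_\rho(u))$ on $S^+$ gives a minimizing sequence $(u_n)\subset N_\rho$ that is also a Palais--Smale sequence at level $c_\rho$; a routine argument based on (F4) shows $(u_n)$ is bounded, so along a subsequence $u_n\rightharpoonup u$ in $X$ and $u_n(x)\to u(x)$ for every $x$.

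The compactness step splits into two cases. If $u\ne0$, the weak continuity of the Hardy term, the pointwise convergence and \eqref{nn} give $J_\rho'(u)=0$; by \eqref{30} this forces $u\notin X^-$, so $u\in N_\rho$ and $J_\rho(u)\ge c_\rho$, while \eqref{30} and Fatou give
\[
J_\rho(u)=\int_{\mathbb{Z}^N}\Big(\tfrac12 f(x,u)u-F(x,u)\Big)\,d\mu\le\liminf_{n\to\infty}\int_{\mathbb{Z}^N}\Big(\tfrac12 f(x,u_n)u_n-F(x,u_n)\Big)\,d\mu=c_\rho ,
\]
so $J_\rho(u)=c_\rho$ and $u$ is a ground state. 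If $u=0$, then $\int_{\mathbb{Z}^N}\frac{\rho}{|x|^2+1}|u_n|^2\,d\mu\to0$ and non-vanishing must hold (otherwise $u_n\to0$ in $\ell^q$, $q>2$, which with \eqref{nn} and $u_n\in N_\rho$ forces $u_n\to0$ in $X$, contradicting $c_\rho>0$); choosing $y_n$ with $\liminf_n|u_n(y_n)|>0$, necessarily $|y_n|\to\infty$, and after passing to a subsequence and translating by periods, $\widetilde u_n:=u_n(\cdot+k_n)\rightharpoonup\widetilde u\ne0$, which by the $T$-periodicity of $V,f$ and the vanishing of the Hardy term is a nontrivial critical point of $J_0$ with $J_0(\widetilde u)\le\liminf_n J_0(u_n)=c_\rho$; hence $c_0:=\inf_{N_0}J_0\le c_\rho$. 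When $\rho=0$ this (with $J_\rho=J_0$) already produces a ground state; when $\rho>0$ it is impossible, for then $J_\rho(w)<J_0(w)$ for every $w\ne0$, and taking a minimizer $w_0\in N_0$ of $J_0$ (which exists by the case $\rho=0$ just treated),
\[
c_\rho\le J_\rho\big(\widehat m_\rho(w_0)\big)<J_0\big(\widehat m_\rho(w_0)\big)\le\max_{\widehat E_{w_0}}J_0=J_0(w_0)=c_0 ,
\]
contradicting $c_0\le c_\rho$. Thus the case $u=0$ does not occur for $\rho>0$, and in every case \eqref{1.1} has a ground state.

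The main obstacle is the loss of $\mathbb{Z}^N$-translation invariance caused by the non-periodic Hardy weight: a minimizing sequence may run off to infinity, which is exactly the alternative $u=0$ above, and the only way I see to exclude it for $\rho>0$ is the comparison with the periodic problem and the strict energy inequality $c_\rho<c_0$. The Hardy term is thus two-faced --- as a compact perturbation it is what makes the case $u\ne0$ close up, yet it is precisely what destroys periodicity and creates the escape alternative. A second, more routine, point to watch is that the extra quadratic Hardy term must not spoil the monotonicity/convexity structure behind the Nehari reduction (it does not, being a nonnegative convex functional) and that the smallness condition $\rho<\tilde{\rho}^+/\kappa$ --- needed exactly to keep $q_\rho$ positive definite on $X^+$ --- is compatible with all of the above.
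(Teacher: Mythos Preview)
Your overall strategy coincides with the paper's: produce a bounded Palais--Smale (or Cerami) sequence at the level $c_\rho$, and rule out escape to infinity via the strict comparison $c_\rho<c_0$ for $\rho>0$, which in turn rests on the existence of a ground state for the periodic problem $\rho=0$. The paper obtains its sequence from the generalized linking theorem (Theorem~\ref{prop-2.2}), identifies $c_\rho=\inf_{N_\rho}J_\rho$ through the inequality of Lemma~\ref{lm-2.8}, and then uses a full profile decomposition (Lemma~\ref{lm-4.3}) together with the same strict inequality~\eqref{3.66} to force $k=0$. You instead aim for the Szulkin--Weth Nehari reduction plus Ekeland, and handle compactness by a direct two--case dichotomy rather than a splitting lemma; your observation that multiplication by $(|x|^2+1)^{-1}$ is compact on $\ell^2(\mathbb{Z}^N)$ (so the Hardy term is weakly continuous) is exactly what makes the comparison with $J_0$ work and is implicit in the paper's Lemma~\ref{lm-3.1}. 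The comparison step $c_\rho\le J_\rho(\widehat m_\rho(w_0))<J_0(\widehat m_\rho(w_0))\le J_0(w_0)=c_0$ is the same as in the paper and is correct.

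One point deserves more care. You assert that under (F5) the fibered maximizer $\widehat m_\rho(u)$ is \emph{unique} and that $\widehat m_\rho:S^+\to N_\rho$ is a homeomorphism; this is the part of the Szulkin--Weth machinery that genuinely uses \emph{strict} monotonicity of $u\mapsto f(x,u)/|u|$. Under the merely non-decreasing condition (F5), uniqueness of the maximizer on $\mathbb{R}^+u^+\oplus X^-$ can fail, and with it the homeomorphism and the $C^1$ structure needed for the clean Ekeland argument. Liu's contribution in \cite{L} is not a Nehari reduction under (F5) but precisely the inequality reproduced here as Lemma~\ref{lm-2.8}, which, combined with the linking theorem, yields $c_\rho\le\inf_{N_\rho}J_\rho$ without ever invoking uniqueness. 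Since you already note that the linking theorem would ``equivalently'' produce the sequence, the fix is easy: either run that route (as the paper does) and use Lemma~\ref{lm-2.8} to identify the level, or---if you insist on the Nehari side---replace ``unique maximizer/homeomorphism'' by the weaker fact that $u\mapsto\max_{\mathbb{R}^+u\oplus X^-}J_\rho$ is a continuous function on $S^+$ (so Ekeland applies to it) and that any maximizer lies in $N_\rho$; this continuity does hold under (F5) but needs a separate argument. Everything else in your proposal (linking geometry from $q_\rho\ge\theta\|\cdot\|^2$ on $X^+$, boundedness via (F4), the $u=0$ alternative, the strict inequality for $\rho>0$) is correct and matches the paper.
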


The second main result is about the behavior of ground state solution in the limit $\rho\rightarrow 0^+$.

\begin{thm}\label{thm-1.2}
Let $0\leq\rho< \frac{\tilde{\rho}^+}{\kappa}$. Assume that (H) and (F1)-(F6) hold. Let $u_{\rho}$ and $u_0$ be the ground state solutions of $J_\rho$ and $J_0$. Then for $\rho_n\rightarrow 0^+$, there exists a sequence $\{ x_n\}\subset \mathbb{Z}^N$
such that $u_{\rho_n}(x+x_n)$ tends to a ground state solution $u_0$ of $J_0$ as $n\rightarrow+\infty$.
 \end{thm}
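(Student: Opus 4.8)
The plan is to combine uniform-in-$\rho$ a priori bounds (for which (F6) is essential), a concentration--compactness argument adapted to the lattice, and the minimax characterization of the ground state level. Write $c_\rho:=\inf_{N_\rho}J_\rho$ and, for $w\in X^+\setminus\{0\}$, $\widehat E(w):=\R^+w\oplus X^-$; from the proof of Theorem~\ref{thm-1.1} one has the characterization $c_\rho=\inf_{w\in X^+\setminus\{0\}}\sup_{\widehat E(w)}J_\rho$. The first step is to show $c_{\rho_n}\to c_0$. Since $\rho_n\ge 0$ the Hardy term is nonnegative, so $J_{\rho_n}\le J_0$ pointwise on $X$, and testing the characterization with $w=u_0^+$ gives $c_{\rho_n}\le\sup_{\widehat E(u_0^+)}J_{\rho_n}\le\sup_{\widehat E(u_0^+)}J_0=J_0(u_0)=c_0$. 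A matching uniform lower bound $c_\rho\ge\alpha>0$ for $\rho$ small follows from the linking geometry: for $u\in X^+$ with $\|u\|=r$ small one has $J_\rho(u)\ge\tfrac14\|u\|^2-C\|u\|^p\ge\alpha$, uniformly in small $\rho$ (using \eqref{nn}, $\|u\|_2\le c\|u\|$ and $\|u\|_p\le\|u\|_2$), and the relevant sphere piece lies in every $\widehat E(w)$.

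Second, I would prove that $\{u_{\rho_n}\}$ is bounded in $X$ for $n$ large, and this is where (F6) does the work. Since $u_{\rho_n}\in N_{\rho_n}$ is a critical point, $c_{\rho_n}=J_{\rho_n}(u_{\rho_n})-\tfrac12\langle J'_{\rho_n}(u_{\rho_n}),u_{\rho_n}\rangle=\int_{\mathbb{Z}^N}\bigl(\tfrac12 f(x,u_{\rho_n})u_{\rho_n}-F(x,u_{\rho_n})\bigr)\,d\mu\ge\tfrac b2\|u_{\rho_n}\|_q^q$, and $c_{\rho_n}\le c_0$, so the $\ell^q(\mathbb{Z}^N)$-norms $\|u_{\rho_n}\|_q$ are bounded. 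Feeding this together with \eqref{nn} into $\langle J'_{\rho_n}(u_{\rho_n}),u_{\rho_n}^\pm\rangle=0$, using $q\le p$ (hence $\|\cdot\|_p\le\|\cdot\|_q$), $\|u^\pm\|_p\le\|u^\pm\|_2$ and the spectral bounds $\pm(Au^\pm,u^\pm)_2\ge m\|u^\pm\|_2^2$, one obtains $m\|u_{\rho_n}^\pm\|_2\le(\rho_n+\varepsilon)\|u_{\rho_n}\|_2+c_\varepsilon C$; summing the two and choosing $\varepsilon$ small and then $n$ large absorbs the leading term, so $\sup_n\|u_{\rho_n}\|<\infty$.

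Third comes the compactness argument. If $\|u_{\rho_n}\|_{\ell^\infty}\to 0$, interpolation $\|u_{\rho_n}\|_p^p\le\|u_{\rho_n}\|_{\ell^\infty}^{p-2}\|u_{\rho_n}\|_2^2\to0$ gives $u_{\rho_n}\to 0$ in $\ell^p$, whence $c_{\rho_n}=\int_{\mathbb{Z}^N}\bigl(\tfrac12 f(x,u_{\rho_n})u_{\rho_n}-F(x,u_{\rho_n})\bigr)\,d\mu\le\tfrac\varepsilon2\|u_{\rho_n}\|_2^2+\tfrac{c_\varepsilon}2\|u_{\rho_n}\|_p^p$ forces $\limsup_n c_{\rho_n}\le C\varepsilon$ for every $\varepsilon>0$, contradicting $c_{\rho_n}\ge\alpha$. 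So along a subsequence $|u_{\rho_n}(\xi_n)|\ge\delta>0$ for some $\xi_n\in\mathbb{Z}^N$. Writing $\xi_n=k_nT+s_n$ with $k_nT$ a period vector of $V$ and $f$ and $s_n$ in a fixed fundamental domain, passing to a further subsequence with $s_n\equiv s_0$, and setting $v_n:=u_{\rho_n}(\cdot+k_nT)$, one uses that $A=-\Delta+V$ commutes with translation by $k_nT$ and that $V,F,f$ are $T$-periodic to see that $v_n^\pm=u_{\rho_n}^\pm(\cdot+k_nT)$, $\|v_n\|=\|u_{\rho_n}\|$, $J_0(v_n)=J_0(u_{\rho_n})=J_{\rho_n}(u_{\rho_n})+O(\rho_n)=c_{\rho_n}+o(1)$, and $\langle J'_0(v_n),\phi\rangle=\int_{\mathbb{Z}^N}\tfrac{\rho_n}{|x+k_nT|^2+1}v_n\phi\,d\mu$, so $\|J'_0(v_n)\|_{X^*}\le\rho_n\|v_n\|_2\to 0$. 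Extract $v_n\rightharpoonup v$ in $X$ (hence pointwise); then $|v(s_0)|\ge\delta$, so $v\ne0$. Testing $J'_0(v_n)\to0$ against finitely supported $\phi$ and passing to the limit (linear terms by weak convergence, $f(x,v_n)\to f(x,v)$ pointwise) gives $J'_0(v)=0$, so $v\in N_0$ and $J_0(v)\ge c_0$. Conversely $\int_{\mathbb{Z}^N}\bigl(\tfrac12 f(x,v_n)v_n-F(x,v_n)\bigr)\,d\mu=J_0(v_n)-\tfrac12\langle J'_0(v_n),v_n\rangle=c_{\rho_n}+o(1)$, and since this integrand is nonnegative by \eqref{30} and converges pointwise, Fatou's lemma yields $J_0(v)\le\liminf c_{\rho_n}\le c_0$. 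Hence $J_0(v)=c_0=\lim c_{\rho_n}$ and $v$ is a ground state solution of $J_0$, which we denote $u_0$.

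It then remains to upgrade weak convergence to convergence in $X$. A Brezis--Lieb type splitting for $u\mapsto\int_{\mathbb{Z}^N}(\tfrac12 f(x,u)u-F(x,u))\,d\mu$ (valid since $v_n\to v$ pointwise and $\{v_n\}$ is $\ell^2$-bounded), combined with the identity of the previous step, gives $\int_{\mathbb{Z}^N}(\tfrac12 f(x,v_n-v)(v_n-v)-F(x,v_n-v))\,d\mu\to 0$, and by (F6) this forces $\|v_n-v\|_q\to 0$, hence $\|v_n-v\|_p\to 0$. Decomposing $f(x,v_n)$ into an $\ell^2$-small part and a part convergent in $\ell^{p'}$, and using $\ell^{p'}\hookrightarrow\ell^2$ (since $p>2$), one checks $\int_{\mathbb{Z}^N}f(x,v_n)(v_n^\pm-v^\pm)\,d\mu\to 0$; then $\langle J'_0(v_n),v_n^\pm-v^\pm\rangle\to 0$ forces $\|v_n^\pm\|\to\|v^\pm\|$, which with weak convergence gives $v_n^\pm\to v^\pm$ in $X$. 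Thus $u_{\rho_n}(\cdot+x_n)\to u_0$ in $X$ with $x_n:=k_nT$. I expect the main obstacle to be the interaction between recentring and the Hardy weight: the translation must be by a period vector, so that the splitting $X=X^+\oplus X^-$ and the periodic data $V,F,f$ are left invariant, while the \emph{non}--translation-invariant Hardy potential is tamed only because $\rho_n\to 0$ makes the shifted weight $\rho_n/(|x+k_nT|^2+1)\le\rho_n$ uniformly small; this, together with the uniform-in-$n$ bound of the second step (where (F6) is indispensable), is precisely what lets the limit solve the \emph{unperturbed} equation with energy exactly $c_0$.
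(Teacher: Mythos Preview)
Your argument is correct and follows the same global scheme as the paper: non-vanishing plus translation by period vectors, identification of the weak limit as a critical point of $J_0$, the Fatou estimate on $G(x,u)=\tfrac12 f(x,u)u-F(x,u)$ to pin the energy at $c_0$, then the Br\'ezis--Lieb splitting for $\int G$ together with (F6) to force $\ell^q$-convergence and finally strong convergence in $X$ via $\langle J'_0(v_n),v_n^\pm-v^\pm\rangle\to 0$. Two organizational differences are worth noting. First, for boundedness the paper invokes its Lemma~\ref{lm-3.2} (the Cerami-type contradiction argument based on (F4)), whereas you use (F6) directly: $c_{\rho_n}=\int G(x,u_{\rho_n})\,d\mu\ge\tfrac b2\|u_{\rho_n}\|_q^q$ bounds the $\ell^q$-norm, and this feeds into $\langle J'_{\rho_n}(u_{\rho_n}),u_{\rho_n}^\pm\rangle=0$ to bound $\|u_{\rho_n}\|_2$; this is more elementary but of course needs (F6), which is assumed here anyway. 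Second, the paper proves $c_\rho\to c_0$ as a separate preliminary lemma (Lemma~\ref{vc}) by constructing Nehari-manifold test elements via Corollary~\ref{lm-5.0}, while you obtain it as a byproduct: the easy inequality $c_{\rho_n}\le c_0$ (since $J_{\rho_n}\le J_0$) plus the Fatou step $c_0\le J_0(v)\le\liminf c_{\rho_n}$ closes the loop. In the last step, when you pass from $v_n\to v$ in $\ell^p$ to $\int f(x,v_n)(v_n^\pm-v^\pm)\to 0$, note that you are implicitly using the $\ell^p$-continuity of the spectral projectors (Lemma~\ref{jj}) to get $v_n^\pm\to v^\pm$ in $\ell^p$; this is also used, without being spelled out, in the paper's final computation.
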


\begin{rem}\label{1}
On Euclidean spaces, the exponent $p$ appeared in the growth condition $|f(x,u)|\leq a(1+|u|^{p-1})$ is always $2<p<2^*:=\frac{2N}{N-2}$ with $N\geq3$, see for examples \cite{GM,KS,M,P,SW}. While we just assume that $2<p<+\infty$ in (F2) since we have the embedding $\ell^{s}$ into $\ell^{t}$ for $s<t$ in the discrete setting. 
\end{rem}



This paper is organized as follows. In Section 2, we present some preliminaries
including settings for graphs and some auxiliary lemmas. In Section 3, we state         a generalized linking theorem and demonstrate the functional $J_\rho\in C^1(X,\R)$ satisfies the conditions of the linking theorem. In Section 4, we study the behavior of Cerami sequences. In Section 5, we are devoted to prove Theorem \ref{thm-1.1} and Theorem \ref{thm-1.2}.

\section{Preliminaries}
In this section, we introduce some settings for graphs and give some useful lemmas.

Let $G=(\mathbb{V},\mathbb{E})$ be a connected, locally finite graph, where $\mathbb{V}$ denotes the vertex set and $\mathbb{E}$ denotes the edge set. We call vertices $x$ and $y$ neighbors, denoted by $x\sim y$, if there is an edge connecting them, i.e. $(x,y)\in \mathbb{E}$.
For any $x,y\in \mathbb{V}$, the distance $d(x,y)$ is defined as the minimum number of edges connecting $x$ and $y$, i.e.
$$d(x,y)=\inf\{k:x=x_0\sim\cdots\sim x_k=y\}.$$
Let $B_{r}(a)=\{x\in\mathbb{V}: d(x,a)\leq r\}$ be the closed ball of radius $r$ centered at $a\in \mathbb{V}$ and denote $|B_{r}(a)|=\sharp B^{S}_{r}(a)$ as the volume (i.e.\,cardinality) of the set $B_{r}(a)$. For brevity, we write $B_{r}:=B_{r}(0)$.

In this paper, we consider the natural discrete model of the Euclidean space, the integer lattice graph. The $N$-dimensional integer lattice graph, denoted by $\mathbb{Z}^N$, consists of the set of vertices $\mathbb{V}=\mathbb{Z}^N$ and the set of edges $\mathbb{E}=\{(x,y): x,\,y\in\mathbb{Z}^{N},\,\underset {{i=1}}{\overset{N}{\sum}}d(x_{i},y_{i})=1\}.$
In the sequel, we write the distance $d(x,y)$, as defined in the Euclidean space, as $|x-y|$ on $\mathbb{Z}^{N}$.
    
 We denote the space of real-valued functions on $\mathbb{V}$ by $C(\mathbb{V})$, and denote the subspace of functions with finite support by $C_c(\mathbb{V})$. For any $\Omega\subset \mathbb{V}$, via continuation by zero, the spaces $C(\Omega)$ and $C_c(\Omega)$
are considered to be subspaces of $C(\mathbb{V})$ and $C_c(\mathbb{V})$. 
For any $u\in C(\Omega)$, the $\ell^p(\Omega)$ space is given by
$$\ell^p(\Omega)=\{u\in C(\Omega):\|u\|_{\ell^p(\Omega)}<+\infty\},\qquad p\in[1,+\infty],$$
 where 
 $$\|u\|_{\ell^\infty(\Omega)}=\underset {x\in \Omega}{\sup}|u(x)|\quad \text{and}\quad\|u\|_{\ell^p(\Omega)}=(\sum\limits_{x\in \Omega}|u(x)|^p)^{\frac{1}{p}},\quad p\in[1,+\infty).$$
We shall write $\|u\|_p$ instead of $\|u\|_{\ell^p(\mathbb{V})}$ if $\Omega=\mathbb{V}$.

For $u,v\in C(\mathbb{V})$, the gradient form $\Gamma,$ called the ``carr\'e du champ" operator, is defined as
\begin{eqnarray*}
\Gamma(u,v)(x)=\frac{1}{2}\underset {y\sim x}{\sum}(u(y)-u(x))(v(y)-v(x))=: \nabla u \nabla v.
\end{eqnarray*}
In particular, we write $\Gamma(u)=\Gamma(u,u)$ and denote the length of $\Gamma(u)$ by
\begin{eqnarray*}
|\nabla u|(x)=\sqrt{\Gamma(u)(x)}=(\frac{1}{2}\underset {y\sim x}{\sum}(u(y)-u(x))^{2})^{\frac{1}{2}}.
\end{eqnarray*}
The Laplacian of $u$ at $x\in \mathbb{V}$ is defined as $\Delta u(x)=\underset {y\sim x}{\sum}(u(y)-u(x)).$
For convenience, for any $u\in C(\Omega)$, we always write
$
\int_{\Omega}u\,d\mu:=\sum\limits_{x\in \Omega}u(x),$ where $\mu$ is the counting measure in $\Omega\subset\mathbb{V}$.

Next, we give some useful lemmas. First, we recall a variant of Hardy type inequality, see \cite{RS1}.
\begin{lm}\label{04}
Let $N\geq3$. We have the discrete Hardy inequality
\begin{equation}\label{ff}
\int_{\mathbb{Z}^N}\frac{|u|^2}{(|x|^2+1)}\,d\mu\leq \kappa\int_{\mathbb{Z}^N}|\nabla u|^2\,d\mu,\quad u\in C_c(\mathbb{Z}^N),
\end{equation}
where $\kappa$ depends only on $N$.
\end{lm}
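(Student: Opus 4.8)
I would prove (\ref{ff}) by the \emph{ground state substitution} (the discrete analogue of the classical ``completing the square'' proof of Hardy's inequality). The plan is to produce $\phi\in C(\mathbb{Z}^N)$ with $\phi>0$ everywhere and a constant $c_N>0$ depending only on $N$ such that the pointwise \emph{super-solution inequality}
$$-\Delta\phi(x)\ \geq\ \frac{c_N}{|x|^2+1}\,\phi(x),\qquad x\in\mathbb{Z}^N, $$
holds. Granting this, for $u\in C_c(\mathbb{Z}^N)$ write $u=\phi v$ with $v=u/\phi\in C_c(\mathbb{Z}^N)$; expanding $(\phi(x)v(x)-\phi(y)v(y))^2$ edge by edge and summing (all sums finite) gives the identity
$$\int_{\mathbb{Z}^N}|\nabla u|^2\,d\mu=\frac12\sum_{x}\sum_{y\sim x}\phi(x)\phi(y)\bigl(v(x)-v(y)\bigr)^2+\int_{\mathbb{Z}^N}\frac{-\Delta\phi}{\phi}\,|u|^2\,d\mu, $$
using $-\Delta\phi(x)\,\phi(x)\,v(x)^2=\tfrac{-\Delta\phi(x)}{\phi(x)}\,u(x)^2$. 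Since $\phi(x)\phi(y)>0$ the first sum is nonnegative, so $\int|\nabla u|^2\,d\mu\geq\int\tfrac{-\Delta\phi}{\phi}|u|^2\,d\mu$, and the super-solution inequality then gives (\ref{ff}) with $\kappa=1/c_N$.

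It remains to construct $\phi$, and the right choice is dictated by the Euclidean picture: the ground state of the classical Hardy operator on $\R^N$ is $|x|^{-(N-2)/2}$, so I would take its regularized lattice version
$$\phi(x)=\bigl(|x|^2+1\bigr)^{-\beta},\qquad \beta:=\frac{N-2}{4}\in\Bigl(0,\tfrac{N-2}{2}\Bigr) $$
(equivalently $\phi=\sqrt{G(0,\cdot)}$, where $G$ is the Green function of $-\Delta$ on $\mathbb{Z}^N$, which is positive and finite because $N\geq3$ and satisfies $G(0,x)\asymp(1+|x|)^{2-N}$). To verify the super-solution inequality, set $t=|x|^2+1$; writing $\phi(x\pm e_j)/\phi(x)=(1+\tfrac{1\pm2x_j}{t})^{-\beta}$, separating the convexity excess of $s\mapsto(1+s)^{-\beta}$ along each axis, Taylor-expanding to second order, and using $\sum_j x_j^2=|x|^2=t-1$, one recovers the continuum computation
$$\frac{-\Delta\phi(x)}{\phi(x)}=\frac{2\beta(N-2-2\beta)}{|x|^2+1}+O\bigl(|x|^{-4}\bigr)=\frac{(N-2)^2/4}{|x|^2+1}+O\bigl(|x|^{-4}\bigr)\qquad(|x|\to\infty), $$
the leading coefficient being positive precisely because $0<\beta<(N-2)/2$. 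Hence there is $R_0=R_0(N)$ with $-\Delta\phi(x)\geq\frac{(N-2)^2}{8(|x|^2+1)}\phi(x)$ for $|x|\geq R_0$; for the finitely many $x$ with $|x|<R_0$ one checks directly that $-\Delta\phi(x)>0$ (the mean of $\phi$ over the $2N$ neighbours of $x$ being strictly smaller than $\phi(x)$, which is immediate at $x=0$ since $\phi(0)>\phi(\pm e_j)$, and elementary elsewhere). Since $\tfrac{1}{|x|^2+1}\leq1$ and the ball $|x|<R_0$ is finite, enlarging the constant to $\kappa:=\max\bigl\{\tfrac{8}{(N-2)^2},\,\max_{|x|<R_0}\tfrac{\phi(x)}{-\Delta\phi(x)}\bigr\}$, which depends only on $N$, makes the super-solution inequality hold on all of $\mathbb{Z}^N$, completing the proof.

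The real obstacle is this last pointwise estimate. Its large-$|x|$ behaviour is just the classical Hardy computation, but the lattice second differences generate error terms that naive uniform bounds fail to control when $|x|$ or $N$ is small, so one must either push the second-order expansion carefully down to all $|x|\geq1$ (treating a small explicit range of $|x|$ by hand) or invoke two-sided estimates for the discrete Green function on $\mathbb{Z}^N$; this is precisely what is carried out in \cite{RS1}. Once the super-solution inequality is in hand, the substitution above finishes the argument at once.
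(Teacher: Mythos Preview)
The paper does not prove this lemma at all: it is stated as a quotation from \cite{RS1} (``we recall a variant of Hardy type inequality, see \cite{RS1}''), with no argument given. So there is no ``paper's own proof'' to compare against; your proposal is strictly more than what the paper offers.

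Your outline via the ground state substitution is the standard route to discrete Hardy inequalities and is essentially what lies behind the cited result. The identity
\[
\int_{\mathbb{Z}^N}|\nabla u|^2\,d\mu=\frac12\sum_{x}\sum_{y\sim x}\phi(x)\phi(y)\bigl(v(x)-v(y)\bigr)^2+\int_{\mathbb{Z}^N}\frac{-\Delta\phi}{\phi}\,|u|^2\,d\mu
\]
is correct for $u=\phi v$ with $v\in C_c(\mathbb{Z}^N)$ and $\phi>0$ (one checks it by writing $\int|\nabla u|^2=\sum_x u(x)(-\Delta u)(x)$, expanding $-\Delta(\phi v)$, and symmetrizing), so the reduction to a pointwise super-solution inequality $-\Delta\phi\ge c_N(|x|^2+1)^{-1}\phi$ is sound. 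The large-$|x|$ asymptotics for $\phi(x)=(|x|^2+1)^{-(N-2)/4}$ are exactly the continuum Hardy computation and give the positive leading term $(N-2)^2/4$ as you say.

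The only soft spot is the one you yourself flag: for the finitely many $|x|<R_0$ you assert $-\Delta\phi(x)>0$ as ``elementary'', but this is not automatic for a lattice radial function and, depending on $N$ and the precise regularization, can fail at isolated points. Your fallback of taking $\kappa$ to be the maximum of $\phi/(-\Delta\phi)$ over that finite ball presupposes that $-\Delta\phi$ is strictly positive there. In practice one either (i) verifies this by a sharper second-order expansion valid for all $|x|\ge1$ (the error terms are of size $O(|x|^{-4})$ but with constants depending on $\beta$ and $N$ that must be tracked), or (ii) replaces the explicit weight by a positive super-solution built from the discrete Green function, whose two-sided bounds on $\mathbb{Z}^N$ supply the needed control. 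Either way the details are exactly the content of \cite{RS1}, to which both you and the paper ultimately appeal.
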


\begin{lm}\label{lm-2.4}
\rm{
For any $\varepsilon>0$, there exists $C_\varepsilon>0$ such that for any $u\in X$,
$$
\int_{\mathbb{V}}F(x,u)\,d\mu\leq \varepsilon \|u\|^2_2+C_\varepsilon   \|u\|^p_p.
$$
}
\end{lm}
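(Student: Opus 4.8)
The plan is to reduce the claimed summed bound to a pointwise inequality for the primitive $F(x,u)$ and then sum against the counting measure. First I would invoke the consequence (\ref{nn}) of (F1)--(F3): for the given $\varepsilon>0$ there is $c_\varepsilon>0$ with $|f(x,t)|\leq \varepsilon|t|+c_\varepsilon|t|^{p-1}$ for all $(x,t)\in\mathbb{Z}^N\times\R$. Integrating this in $t$ from $0$ to $u$ (in absolute value), using $F(x,u)=\int_0^u f(x,t)\,dt$, and recalling $F(x,u)\geq 0$ from (\ref{30}), I obtain the pointwise estimate
\[
0\leq F(x,u)\leq \frac{\varepsilon}{2}|u|^2+\frac{c_\varepsilon}{p}|u|^p,\qquad (x,u)\in\mathbb{Z}^N\times\R.
\]
The only care needed here is getting the constants $\tfrac12$ and $\tfrac1p$ right from $\int_0^{|u|}t^{k}\,dt$, and noting that the absolute values cause no loss since $F$ is nonnegative by (\ref{30}).

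Next I would sum this inequality over $x\in\mathbb{V}=\mathbb{Z}^N$. By the definition $\int_{\mathbb{V}}\cdot\,d\mu=\sum_{x\in\mathbb{V}}$, this yields
\[
\int_{\mathbb{V}}F(x,u)\,d\mu\leq \frac{\varepsilon}{2}\|u\|_2^2+\frac{c_\varepsilon}{p}\|u\|_p^p .
\]
Since $u\in X=\ell^2(\mathbb{Z}^N)$ and, in the discrete setting, $\ell^2(\mathbb{Z}^N)\hookrightarrow\ell^p(\mathbb{Z}^N)$ for every $p\geq 2$ (Remark \ref{1}), both norms on the right are finite, so the series on the left converges. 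Finally, to match the stated form with coefficient $\varepsilon$ (rather than $\varepsilon/2$) in front of $\|u\|_2^2$, I would apply the previous step with $\varepsilon$ replaced by $2\varepsilon$ and set $C_\varepsilon:=c_{2\varepsilon}/p$.

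There is essentially no genuine obstacle in this lemma; it is a one-line pointwise integration of the growth bound (\ref{nn}) followed by summation, with the finiteness of the right-hand side (hence of $\int_{\mathbb{V}}F(x,u)\,d\mu$) guaranteed by the embedding $\ell^2\hookrightarrow\ell^p$ already recorded in Remark \ref{1}.
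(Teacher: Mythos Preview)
Your proof is correct and essentially the same as the paper's: both obtain a pointwise bound on $F(x,u)$ from (\ref{nn}) and then sum. The only cosmetic difference is that the paper first uses $F(x,u)\leq \tfrac12 f(x,u)u$ from (\ref{30}) and then applies (\ref{nn}) to $f(x,u)u$, whereas you integrate (\ref{nn}) directly in $t$; the resulting constants and argument are equivalent.
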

\begin{proof}
It follows from (\ref{nn}) and (\ref{30}) that
\begin{eqnarray*}
\int_{\mathbb{V}}F(x,u)\,d\mu &\leq &
\frac{1}{2}\int_{\mathbb{V}}f(x,u)u \,d\mu\\
&\leq& \frac{1}{2}(\int_{\mathbb{V}}\varepsilon|u|^{2}+c_{\varepsilon}|u|^p\,d\mu)\\
& \leq & \varepsilon \|u\|^2_2+C_\varepsilon   \|u\|^p_p.
\end{eqnarray*}
\end{proof}

\begin{lm}\label{lm-3.4}
Let $0\leq \rho < \frac{\tilde{\rho}^+}{\kappa}$. For any $u\in X^+$, $\| u \|^{2}_{\rho}:=(\|u\|^{2}-\int_{\mathbb{V}}\frac{\rho}{(|x|^2+1)}|u|^2\,d\mu)$
satisfies
\begin{eqnarray*}\label{2.32}
\| u \|^2\geq \| u \|_{\rho}^2\geq \frac{1}{2}  (\tilde{\rho}^+-  \kappa\rho  ) \| u \|^2.
\end{eqnarray*}
Hence $\|\cdot\|_{\rho}$ is a norm defined on $X^+$ and it is equivalent with the norm $\|\cdot\|.$
\end{lm}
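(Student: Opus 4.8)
The plan is to prove the two displayed inequalities; the assertion that $\|\cdot\|_\rho$ is an equivalent norm on $X^+$ then follows immediately. The upper bound $\|u\|^2\ge\|u\|_\rho^2$ is trivial: since $\rho\ge 0$ and $(|x|^2+1)^{-1}>0$, the subtracted term $\int_{\mathbb{V}}\frac{\rho}{(|x|^2+1)}|u|^2\,d\mu$ is nonnegative. For the lower bound the two ingredients are the discrete Hardy inequality (Lemma~\ref{04}) and the definition \eqref{73} of $\rho^+$.

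First I would extend \eqref{ff} from $C_c(\mathbb{Z}^N)$ to all of $X=\ell^2(\mathbb{Z}^N)$. Both sides are continuous quadratic forms on $\ell^2(\mathbb{Z}^N)$, being bounded respectively by $\|u\|_2^2$ and $C_N\|u\|_2^2$; since $C_c(\mathbb{Z}^N)$ is dense in $\ell^2(\mathbb{Z}^N)$, inequality \eqref{ff} passes to the limit and holds for every $u\in X$. (Alternatively, restrict $u$ to $B_n$ and use monotone convergence on the left and dominated convergence on the right, the edgewise integrand being dominated by $2(u(x)^2+u(y)^2)$.) Next I would note that the supremum defining $\rho^+$ is attained: taking admissible constants $M_k\uparrow\rho^+$ and letting $k\to\infty$ gives $(Au,u)_2\ge\rho^+\int_{\mathbb{Z}^N}|\nabla u|^2\,d\mu$ for all $u\in X^+$, hence also $(Au,u)_2\ge\tilde{\rho}^+\int_{\mathbb{Z}^N}|\nabla u|^2\,d\mu$ since $\tilde{\rho}^+=\min\{\rho^+,1\}\le\rho^+$.

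For $u\in X^+$ we have $u=u^+$, so $\|u\|^2=(Au,u)_2$ and therefore $\int_{\mathbb{Z}^N}|\nabla u|^2\,d\mu\le\frac{1}{\tilde{\rho}^+}\|u\|^2$. Combining this with the extended Hardy inequality,
\[
\int_{\mathbb{Z}^N}\frac{|u|^2}{(|x|^2+1)}\,d\mu\ \le\ \kappa\int_{\mathbb{Z}^N}|\nabla u|^2\,d\mu\ \le\ \frac{\kappa}{\tilde{\rho}^+}\,\|u\|^2,
\]
so that $\|u\|_\rho^2=\|u\|^2-\rho\int_{\mathbb{Z}^N}\frac{|u|^2}{(|x|^2+1)}\,d\mu\ge\big(1-\tfrac{\kappa\rho}{\tilde{\rho}^+}\big)\|u\|^2=\tfrac{\tilde{\rho}^+-\kappa\rho}{\tilde{\rho}^+}\|u\|^2$. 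Since $0<\tilde{\rho}^+\le 1$, the last quantity is at least $(\tilde{\rho}^+-\kappa\rho)\|u\|^2\ge\tfrac12(\tilde{\rho}^+-\kappa\rho)\|u\|^2$, and $\rho<\tilde{\rho}^+/\kappa$ guarantees $\tilde{\rho}^+-\kappa\rho>0$. Together with the upper bound this gives $\tfrac12(\tilde{\rho}^+-\kappa\rho)\|u\|^2\le\|u\|_\rho^2\le\|u\|^2$, i.e. $\|\cdot\|_\rho$ is a norm on $X^+$ equivalent to $\|\cdot\|$. The only step that is not pure bookkeeping is the passage of the Hardy inequality from compactly supported functions to all of $X$, and this is standard; I do not foresee a genuine obstacle.
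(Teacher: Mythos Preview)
Your argument is correct, and in fact slightly cleaner than the paper's. You chain the Hardy inequality with the defining bound $(Au,u)_2\ge\tilde\rho^+\int|\nabla u|^2$ to control the perturbation term directly, obtaining $\|u\|_\rho^2\ge\bigl(1-\kappa\rho/\tilde\rho^+\bigr)\|u\|^2$, and then use $\tilde\rho^+\le 1$ to reduce to the stated constant $\tfrac12(\tilde\rho^+-\kappa\rho)$. The paper instead expands $\|u\|^2=(Au,u)_2=\int|\nabla u|^2+\int V|u|^2$, bounds $\|u\|_\rho^2$ once from below by the potential part and once by a multiple of the gradient part, and combines the two estimates. Your route avoids having to handle $\int V|u|^2$ separately and even yields the sharper lower bound $\|u\|_\rho^2\ge\frac{\tilde\rho^+-\kappa\rho}{\tilde\rho^+}\,\|u\|^2$ before you weaken it. The density step extending \eqref{ff} from $C_c(\mathbb{Z}^N)$ to $X$ is routine, as you say.
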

\begin{proof}
  We only need to prove that $\| u \|_{\rho}^2\geq \frac{1}{2}  (\tilde{\rho}^+-  \kappa\rho  ) \| u \|^2.$
In fact, for any $u\in X^+$, we have that $\|u\|^2=(Au,u)_{2}$. Then by the Hardy inequality (\ref{ff}), we get that
\begin{eqnarray*}
\int_{\mathbb{V}}|\nabla u|^2- \frac{\rho}{(|x|^2+1)}|u|^2 \,d\mu\geq(1-\kappa\rho)\int_{\mathbb{V}}|\nabla u|^2\,d\mu.
\end{eqnarray*}
This imlies that
\begin{eqnarray*}\label{f1}
\|u\|_{\rho}^2\geq \int_{\mathbb{V}}V(x) |u|^2 \,d\mu\geq \frac{1}{2}  (\tilde{\rho}^+-\kappa\rho)\int_{\mathbb{V}}V(x) |u|^2 \,d\mu.
\end{eqnarray*}
Moreover, by (\ref{73}) and the Hardy inequality (\ref{ff}), one has that
\begin{eqnarray*}
\|u\|_{\rho}^2\geq
(\tilde{\rho}^+-  \kappa\rho  )\int_{\mathbb{V}}|\nabla u|^2\,d\mu.
\end{eqnarray*}
Hence the result $\| u \|_{\rho}^2\geq \frac{1}{2}  (\tilde{\rho}^+-  \kappa\rho  ) \| u \|^2$ follows from the last two inequalities. 
\end{proof}

\begin{lm}\label{lm-3.1}

If $\underset{n\rightarrow+\infty}{\lim}|x_n|=+\infty$, then for any $u\in X$, as $n\rightarrow+\infty,$
\begin{equation*}
\int_{\mathbb{V}}    \frac{1}{(|x|^2+1)}|u(x-x_n)|^2 \,d\mu\rightarrow0.
\end{equation*}

\end{lm}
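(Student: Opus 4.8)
The plan is to reduce the claim to a standard dominated-convergence argument after a translation of the summation variable. Concretely, since $\mu$ is the counting measure on $\mathbb{Z}^N$, the quantity in question is the series
\[
\int_{\mathbb{V}}\frac{1}{|x|^2+1}|u(x-x_n)|^2\,d\mu=\sum_{x\in\mathbb{Z}^N}\frac{|u(x-x_n)|^2}{|x|^2+1},
\]
and substituting $y=x-x_n$ (a bijection of $\mathbb{Z}^N$) rewrites it as $\sum_{y\in\mathbb{Z}^N}\frac{|u(y)|^2}{|y+x_n|^2+1}$. So it suffices to show this last series tends to $0$.

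First I would record the pointwise limit: for each fixed $y\in\mathbb{Z}^N$, the triangle inequality gives $|y+x_n|\geq |x_n|-|y|$, so $|y+x_n|\to+\infty$ as $n\to+\infty$ because $|x_n|\to+\infty$; hence $\frac{|u(y)|^2}{|y+x_n|^2+1}\to 0$. Next I would record the domination: for every $n$ and every $y$ we have $\frac{|u(y)|^2}{|y+x_n|^2+1}\leq |u(y)|^2$, and since $u\in X=\ell^2(\mathbb{Z}^N)$ the majorant $y\mapsto |u(y)|^2$ is summable, with $\sum_{y}|u(y)|^2=\|u\|_2^2<+\infty$. Applying the dominated convergence theorem with respect to the counting measure then yields $\sum_{y}\frac{|u(y)|^2}{|y+x_n|^2+1}\to 0$, which is the assertion.

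There is essentially no serious obstacle here; the only point requiring a word of care is that the translation is a measure-preserving bijection of $\mathbb{Z}^N$ (so no mass is lost or gained), and that the dominating function $|u|^2$ is independent of $n$. If one prefers to avoid invoking dominated convergence for series, an equivalent and equally short route is an $\varepsilon$-splitting: given $\varepsilon>0$ choose $R$ so that $\sum_{|y|>R}|u(y)|^2<\varepsilon$ (possible since $u\in\ell^2$), bound the tail part by $\sum_{|y|>R}|u(y)|^2<\varepsilon$ using $\frac{1}{|y+x_n|^2+1}\leq 1$, and bound the finite head part $\sum_{|y|\leq R}\frac{|u(y)|^2}{|y+x_n|^2+1}\leq \frac{\|u\|_2^2}{(|x_n|-R)^2+1}$, which is $<\varepsilon$ once $n$ is large enough that $|x_n|>R$ and $(|x_n|-R)^2+1>\|u\|_2^2/\varepsilon$. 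Either way the lemma follows.
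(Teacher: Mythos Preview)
Your proof is correct and takes a genuinely different route from the paper's. The paper argues by density: it approximates $u$ by compactly supported functions $\phi_m$, shows directly that the translated integral with $\phi_m$ vanishes (using that the support of $\phi_m$ lies in a fixed ball while $|x_n|\to\infty$), and then passes from $\phi_m$ back to $u$ by invoking the discrete Hardy inequality (Lemma~\ref{04}) to control the weighted $L^2$ norm of the difference. Your argument bypasses both the approximation step and the Hardy inequality entirely: after the change of variable you obtain a sequence of summands dominated by the fixed summable function $|u|^2$, and dominated convergence (or the equivalent $\varepsilon$-splitting you sketch) finishes the job. Your approach is shorter and uses strictly less machinery; the paper's route, while more roundabout here, ties the lemma to the Hardy inequality that is already central elsewhere in the paper.
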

\begin{proof}
Let $\phi_m\in C_{c}(\mathbb{V})$ and $\phi_m\rightarrow u$ in $X$ as $m\rightarrow +\infty$. Assume that $\text{supp}(\phi_m)\subset B_{r_m}$ with $r_m\geq1$.
Since $\underset{n\rightarrow+\infty}{\lim}|x_n|=+\infty$, for any $m$, there exists $n=n(m)$ such that $|x_n|-r_m\geq m$ and $\{n(m)\}$ is an increasing sequence. Then 
\begin{eqnarray*}
\int_{\mathbb{V}}    \frac{1}{(|x|^2+1)}|\phi_m(x-x_n)|^2 \,d\mu&= &
 \int_{\mathbb{V}} \frac{1}{(|x+x_n|^2+1)}|\phi_m|^2\,d\mu\\
& = & \int_{B_{r_m}} \frac{1}{(|x+x_n|^2+1)}|\phi_m|^2\,d\mu\\
& \leq & \frac{1}{(|x_n|-r_m)^2}\int_{B_{r_m}} |\phi_m|^2\,d\mu\\
& \leq & \frac{1}{m^2}\|\phi_m\|_2^2
\rightarrow 0,\qquad \text{as}~m\rightarrow+\infty.
\end{eqnarray*}
Then by the Hardy inequality (\ref{ff}), we get the result.
\end{proof}

Let $(\Omega, \Sigma, \tau)$ be a measure space, which consists of a set $\Omega$ equipped with a $\sigma-$algebra $\Sigma$ and a Borel measure $\tau:\Sigma\rightarrow[0,+\infty]$. We introduce the classical Br\'{e}zis-Lieb lemma \cite{BL}.

\begin{lm}\label{i}
(Br\'{e}zis-Lieb lemma)
Let $(\Omega, \Sigma, \tau)$ be a measure space and $\{u_n\}\subset L^{p}(\Omega, \Sigma, \tau)$ with $0<p<+\infty$. If
\begin{itemize}
\item[(a)]
$\{u_n\}$ is uniformly bounded in $L^{p}(\Omega)$,\\

\item[(b)] $u_n\rightarrow u, \tau-$almost everywhere in $\Omega$,
\end{itemize}
then we have that
\begin{eqnarray*}
\underset{n\rightarrow+\infty}{\lim}(\|u_n\|^{p}_{L^{p}(\Omega)}-\|u_n-u\|^{p}_{L^{p}(\Omega)})=\|u\|^{p}_{L^{p}(\Omega)}.
\end{eqnarray*}

\end{lm}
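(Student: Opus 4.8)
The plan is to reproduce the classical truncation argument behind the Br\'ezis--Lieb lemma. First I would record the elementary inequality: for every $\varepsilon>0$ there is $C_\varepsilon>0$ such that
$$\big||a+b|^p-|a|^p\big|\le\varepsilon|a|^p+C_\varepsilon|b|^p,\qquad a,b\in\R .$$
To prove it one may assume $a\neq0$, divide by $|a|^p$, and study $h(s)=\big||1+s|^p-1\big|$ with $s=b/a$: since $h(s)\to0$ as $s\to0$ there is $\delta>0$ with $h(s)<\varepsilon$ for $|s|<\delta$, while for $|s|\ge\delta$ one has $h(s)\le(1+|s|)^p+1\le\big((1+\delta^{-1})^p+\delta^{-p}\big)|s|^p$, which yields $C_\varepsilon$. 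Next, from hypothesis (a) and Fatou's lemma applied to $|u_n|^p\to|u|^p$ $\tau$-a.e., we get $u\in L^p(\Omega)$, hence $w_n:=u_n-u\in L^p(\Omega)$; using $|a+b|^p\le C_p(|a|^p+|b|^p)$ we also obtain $M:=\sup_n\|w_n\|^p_{L^p(\Omega)}<+\infty$.

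Now fix $\varepsilon>0$ and set
$$g_n:=\Big(\big||u_n|^p-|w_n|^p-|u|^p\big|-\varepsilon|w_n|^p\Big)^+ .$$
Writing $u_n=w_n+u$ and applying the elementary inequality with $a=w_n$, $b=u$, together with $\big||w_n+u|^p-|w_n|^p-|u|^p\big|\le\big||w_n+u|^p-|w_n|^p\big|+|u|^p$, gives $\big||u_n|^p-|w_n|^p-|u|^p\big|\le\varepsilon|w_n|^p+(C_\varepsilon+1)|u|^p$, so $0\le g_n\le(C_\varepsilon+1)|u|^p\in L^1(\Omega)$. Since $u_n\to u$ $\tau$-a.e. we have $w_n\to0$ $\tau$-a.e., hence $g_n\to0$ $\tau$-a.e., and the dominated convergence theorem gives $\int_\Omega g_n\,d\tau\to0$.

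Finally, from $\big||u_n|^p-|w_n|^p-|u|^p\big|\le g_n+\varepsilon|w_n|^p$ I would integrate to obtain
$$\int_\Omega\big||u_n|^p-|u_n-u|^p-|u|^p\big|\,d\tau\le\int_\Omega g_n\,d\tau+\varepsilon M ,$$
whence $\limsup_{n\to\infty}\int_\Omega\big||u_n|^p-|u_n-u|^p-|u|^p\big|\,d\tau\le\varepsilon M$. Letting $\varepsilon\to0$ forces the left side to vanish, which in particular gives $\lim_{n\to\infty}\big(\|u_n\|^p_{L^p(\Omega)}-\|u_n-u\|^p_{L^p(\Omega)}\big)=\|u\|^p_{L^p(\Omega)}$, as claimed. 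The only genuinely delicate step is the $\varepsilon$--$C_\varepsilon$ splitting in the elementary inequality and making sure the dominating function $(C_\varepsilon+1)|u|^p$ does not depend on $n$; everything else is a routine application of dominated convergence.
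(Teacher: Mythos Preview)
Your argument is correct: it is exactly the classical truncation proof of Br\'ezis and Lieb, with the $\varepsilon$--$C_\varepsilon$ inequality providing an $n$-independent dominating function so that dominated convergence applies to $g_n$. Note, however, that the paper does not actually prove this lemma; it merely states it and cites the original reference \cite{BL}, so there is no paper proof to compare against---your write-up simply supplies the standard proof that the paper omits.
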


\begin{rem}\label{lm4}

 If $\Omega$ is countable and $\tau$ is the counting measure $\mu$ in $\Omega$, then we get a discrete version of the Br\'{e}zis-Lieb lemma.

\end{rem}

We give a discrete Lions lemma corresponding to Lions \cite{L1} on $\mathbb{R}^{N}$, which denies a sequence $\{u_n\}$ to distribute itself over $\mathbb{V}$.
\begin{lm}\label{88}

(Lions lemma)
Let $1\leq p<+\infty$. Assume that $\{u_n\}$ is bounded in $\ell^{p}(\mathbb{V})$ and $\|u_{n}\|_{\infty}\rightarrow0,$ as $n\rightarrow\infty.$
Then for any $p<q<+\infty$, as $n\rightarrow\infty$,
\begin{eqnarray*}
u_n\rightarrow0,\qquad \text{in}~\ell^{q}(\mathbb{V}).
\end{eqnarray*}

\end{lm}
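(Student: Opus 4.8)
The plan is to use the elementary interpolation inequality between $\ell^p$, $\ell^q$ and $\ell^\infty$ norms that is available on a discrete measure space (where $\|v\|_\infty\le\|v\|_r$ for every $r$, simply because the measure is counting measure), and which has no direct analogue in the continuous setting. First I would fix $q$ with $p<q<+\infty$ and record the pointwise bound: for every $x\in\mathbb{V}$,
$$
|u_n(x)|^q=|u_n(x)|^{\,q-p}\,|u_n(x)|^p\le\|u_n\|_\infty^{\,q-p}\,|u_n(x)|^p,
$$
which is valid because $|u_n(x)|\le\|u_n\|_\infty$ and the exponent $q-p$ is positive.

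Next I would sum this inequality over all $x\in\mathbb{V}$ and use $\int_{\mathbb{V}}\,d\mu=\sum_{x\in\mathbb{V}}$ to obtain
$$
\|u_n\|_q^q=\sum_{x\in\mathbb{V}}|u_n(x)|^q\le\|u_n\|_\infty^{\,q-p}\sum_{x\in\mathbb{V}}|u_n(x)|^p=\|u_n\|_\infty^{\,q-p}\,\|u_n\|_p^p.
$$
By hypothesis $\{u_n\}$ is bounded in $\ell^p(\mathbb{V})$, so there is a constant $C>0$ with $\|u_n\|_p^p\le C$ for all $n$; and since $\|u_n\|_\infty\to 0$ with $q-p>0$, we also have $\|u_n\|_\infty^{\,q-p}\to 0$. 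Combining these two facts gives $\|u_n\|_q^q\le C\,\|u_n\|_\infty^{\,q-p}\to 0$, i.e. $u_n\to 0$ in $\ell^q(\mathbb{V})$, as claimed.

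I do not expect any genuine obstacle here: the discreteness collapses the classical Lions concentration-compactness argument on $\mathbb{R}^N$ (which needs a covering of space by unit balls together with the Sobolev inequality) into the one-line chain of inequalities above. The only point worth being careful about is that the argument genuinely requires $q>p$ strictly, so that the power $q-p$ on $\|u_n\|_\infty$ is positive; for $q=p$ the conclusion is false in general, and this case is correctly excluded from the statement. This lemma will later be the mechanism by which vanishing of a Cerami sequence in $\ell^\infty$ is upgraded to vanishing in the $\ell^p$ spaces controlling the nonlinearity $f$.
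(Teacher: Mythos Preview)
Your argument is correct and is exactly the paper's own proof: the paper simply records the interpolation inequality $\|u_n\|_q^q\le\|u_n\|_p^p\,\|u_n\|_\infty^{q-p}$ and concludes, which is precisely the inequality you derive by the pointwise bound and summation. Your extra comments about the necessity of $q>p$ and the contrast with the continuous Lions lemma are accurate but not needed for the proof itself.
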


\begin{proof}
For $p<q<+\infty$, this result follows from the interpolation inequality
\begin{eqnarray*}
\|u_n\|^{q}_{q}\leq\|u_n\|_{p}^{p}\|u_n\|_{\infty}^{q-p}.
\end{eqnarray*}

\end{proof}

Finally, we prove that the direct sum $X^+\oplus X^-$ in $X$ associated to a decomposition of the spectrum of the operator $A$ remains "topologically direct" in the $\ell^p(\mathbb{V})$ space.
\begin{lm}\label{jj}
Let $X^+\oplus X^-$ be the decomposition of $X=\ell^2(\mathbb{V})$ according to the positive and negative part of the spectrum $\sigma(A)$. Assume that $P,\,Q: X\rightarrow X$ are the projectors onto $X^-$ along $X^+$ and onto $X^+$ along $X^-$, respectively. Then for any $p\in[1,+\infty]$, the restrictions of $P$ and $Q$ to $X\cap \ell^p(\mathbb{V})$ are $\ell^p-$continuous.

\end{lm}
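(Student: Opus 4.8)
The plan is to exploit the fact that on a periodic lattice $\mathbb{Z}^N$ the operator $A=-\Delta+V$ is a bounded operator on every $\ell^p$, and that the spectral projector $Q$ onto $X^+$ (likewise $P=\mathrm{Id}-Q$) can be written as a contour integral of the resolvent, which in turn is given by convolution with an $\ell^1$ kernel. First I would note that $-\Delta$ is a bounded self-adjoint operator on $\ell^2(\mathbb{Z}^N)$ with $\sigma(-\Delta)\subset[0,4N]$, and since $V\in L^\infty$, $A$ is bounded and self-adjoint on $\ell^2$ with $\sigma(A)\subset[-\|V\|_\infty,4N+\|V\|_\infty]$. By hypothesis (H), $0\notin\sigma(A)$, so $\sigma(A)$ splits into the two closed pieces $\sigma^-\le\sigma(A)\cap(-\infty,0)$ and $\sigma(A)\cap(0,+\infty)\ge\sigma^+$, separated by a gap around $0$. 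Therefore I may choose a rectifiable closed contour $\gamma$ in $\mathbb{C}$ that encircles the positive part of the spectrum and stays at positive distance $\delta>0$ from all of $\sigma(A)$, and write the Riesz projector
$$
Q=\frac{1}{2\pi i}\int_{\gamma}(zI-A)^{-1}\,dz .
$$

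The key step is to show that each resolvent $(zI-A)^{-1}$, for $z\in\gamma$, restricts to a bounded operator on $\ell^p(\mathbb{Z}^N)$ for every $p\in[1,\infty]$, with a norm bound uniform in $z\in\gamma$. Here I would use that $A$ is a band operator: $(Au)(x)$ depends only on the values $u(y)$ with $|x-y|\le 1$, so $A$ acts on $\ell^p$ with norm at most $2N+\|V\|_\infty$ for every $p$. More to the point, I would use the Combes--Thomas estimate, which gives exponential off-diagonal decay of the resolvent kernel: there are constants $C=C(\delta,N)>0$ and $\alpha=\alpha(\delta,N)>0$ such that the matrix elements $G_z(x,y):=\langle\delta_x,(zI-A)^{-1}\delta_y\rangle$ satisfy $|G_z(x,y)|\le C e^{-\alpha|x-y|}$ for all $z$ with $\mathrm{dist}(z,\sigma(A))\ge\delta$. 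Consequently $\sup_x\sum_y|G_z(x,y)|$ and $\sup_y\sum_x|G_z(x,y)|$ are both finite and bounded uniformly in $z\in\gamma$, so by the Schur test $(zI-A)^{-1}$ is bounded on $\ell^1$ and on $\ell^\infty$ uniformly in $z\in\gamma$, and hence on every $\ell^p$, $1\le p\le\infty$, by Riesz--Thorin interpolation (note the kernel is symmetric since $A$ is self-adjoint, $z$ real or the contour taken symmetric). Integrating over the compact contour $\gamma$ and using that $z\mapsto(zI-A)^{-1}$ is continuous in operator norm on each $\ell^p$, I conclude that $Q$, and thus $P$, has a kernel $\int_\gamma G_z(\cdot,\cdot)\,dz/(2\pi i)$ that is again exponentially decaying, whence $Q,P$ map $\ell^p(\mathbb{Z}^N)$ boundedly into itself for every $p\in[1,\infty]$.

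Finally I would check compatibility: the operators $P,Q$ defined by these kernels on $\ell^p$ agree with the $\ell^2$-spectral projectors on the dense subspace $C_c(\mathbb{V})\subset X\cap\ell^p(\mathbb{V})$, hence the restriction of the $\ell^2$-projector to $X\cap\ell^p(\mathbb{V})$ coincides with the $\ell^p$-bounded operator just constructed; this gives exactly the asserted $\ell^p$-continuity of $P|_{X\cap\ell^p}$ and $Q|_{X\cap\ell^p}$. I expect the main obstacle to be the Combes--Thomas / Schur-test estimate for the resolvent kernel: one must carry out the standard conjugation argument $e^{\lambda\,\phi(x)}(zI-A)e^{-\lambda\,\phi(x)}=zI-A-R_\lambda$ with $\phi(x)=\langle a,x\rangle$ a linear weight (or $\phi(x)=|x|$ truncated), show the commutator perturbation $R_\lambda$ has small norm for $\lambda$ small (using the band structure of $A$, so $R_\lambda=O(\lambda)$ uniformly), and deduce invertibility of the conjugated operator together with the bound $\|e^{\lambda\phi}(zI-A)^{-1}e^{-\lambda\phi}\|\le 2/\delta$; everything else — the contour integral, the Schur test, the interpolation, and the density argument — is routine. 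Alternatively, if one prefers to avoid Combes--Thomas, one can argue more softly: $A$ is bounded and invertible on $\ell^p$ for all $p$ (band operator with a bounded inverse on $\ell^2$, and invertibility on $\ell^p$ follows since the inverse, being given by an exponentially decaying kernel, is again $\ell^p$-bounded), and $Q$ is a polynomial-free analytic function of $A$ supported away from $0$, so $Q=\psi(A)$ for a function $\psi$ holomorphic on a neighbourhood of $\sigma(A)$; approximating $\psi$ uniformly on $\sigma(A)$ by polynomials in $z$ and $(z-c)^{-1}$ for a fixed $c$ in the gap, and using $\ell^p$-boundedness of $A$ and of $(A-c)^{-1}$, one gets $Q$ as an $\ell^p$-operator-norm limit of $\ell^p$-bounded operators, hence $\ell^p$-bounded — again reducing everything to the single fact that $(A-c)^{-1}$ has an exponentially (or at least summably) decaying kernel.
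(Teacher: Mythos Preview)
Your proof is correct and shares the same backbone as the paper's argument---both represent the projector by a Dunford--Riesz contour integral
\[
P=\frac{1}{2\pi i}\int_{\Gamma}(A-\lambda)^{-1}\,d\lambda
\]
and reduce the question to the $\ell^p$-boundedness of the resolvent along $\Gamma$. The genuine difference lies in how that resolvent bound is obtained. The paper simply invokes the result of Bauer--Hua--Keller \cite{BHK}, which says that for a bounded potential the $\ell^p$-spectrum of $-\Delta+V$ is $p$-independent and the resolvents agree on $\ell^p\cap\ell^2$; this immediately gives $0\notin\sigma(A_p)$ and hence a well-defined contour and a bounded $P_p$ on each $\ell^p$, with $P_p=P_2$ on the intersection. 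You instead prove the needed resolvent bound from scratch via a Combes--Thomas exponential-decay estimate for the Green kernel, then pass through the Schur test and Riesz--Thorin interpolation. Your route is longer but fully self-contained and in fact yields more: an explicit exponentially decaying kernel for $P$ and $Q$, which the paper's black-box citation does not display. The paper's route is shorter and avoids carrying out the conjugation argument, at the cost of relying on an external reference. One minor imprecision in your write-up: for complex $z$ the resolvent kernel is not literally symmetric, but this is harmless since the Combes--Thomas bound $|G_z(x,y)|\le Ce^{-\alpha|x-y|}$ is symmetric in $x,y$, which is all the Schur test needs.
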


\begin{proof}
Assume that $\ell^p(\mathbb{V};\mathbb{C})=\ell^p(\mathbb{V})+i\,\ell^p(\mathbb{V})$ is the complexification of $\ell^p(\mathbb{V})$. Let $A_p$ be the operator
\begin{eqnarray*}
A_p: \ell^p(\mathbb{V};\mathbb{C})\rightarrow \ell^p(\mathbb{V};\mathbb{C}): u\mapsto -\Delta u+V(x)u
\end{eqnarray*}
with domain $D(A_p):=\{u\in \ell^p(\mathbb{V};\mathbb{C})|\, A_p u\in \ell^p(\mathbb{V};\mathbb{C})\}$.
Since the potential $V$ is bounded, it follows from \cite{BHK} that the spectrum $\sigma(A_p)\subset\mathbb{R}$ is independent of $p\in[1,+\infty]$, and moreover, for any $\lambda\not\in\sigma(A_p)=\sigma(A_2)=\sigma(A)$,
\begin{eqnarray*}
(A_p-\lambda)^{-1}=(A_2-\lambda)^{-1},\quad\text{on~}\ell^p(\mathbb{V};\mathbb{C})\cap \ell^2(\mathbb{V};\mathbb{C}).
\end{eqnarray*}
Then $0\not\in\sigma(A_p)$ and we assume that $P_p,Q_p$ are the projectors on the negative and positive eigenspaces of $A_p$. Since $\sigma(A_p)$ is bounded below, by Theorem 6.17 of \cite{K}, we can define the projector $P_p$ as follows:
\begin{eqnarray*}
P_p=\frac{1}{2\pi i}\int_{\Gamma}(A_p-\lambda)^{-1}\,d\lambda,
\end{eqnarray*}
where $\Gamma$ is a right-oriented curve around the negative part of $\sigma(A_p)$ but not crossing the spectrum. This yields that
\begin{eqnarray*}
P_p=P_2,\quad\text{on~}\ell^p(\mathbb{V};\mathbb{C})\cap \ell^2(\mathbb{V};\mathbb{C}).
\end{eqnarray*}
Then we get the desired result since $P=P_2|_{X}$ and $Q=I-P$. 
\end{proof}

\section{Generalized Linking Theorem}
 In this section, we first introduce a new topology $\mathcal{T}$ on the space $X$ so as to provide a generalized linking theorem involving the Nehari-Pankov
manifold, then we demonstrate that the functional $J_\rho\in C^1(X,\R)$ satisfies the conditions of the linking theorem.

Let $X=X^+\oplus X^-$ with $X^+\perp X^-$. For any $u\in X$, we write $u=u^++u^-$, where $u^+\in X^+$ and $u^-\in X^-$, as the direct sum decomposition.

Clearly, we have the norm topology $\|\cdot\|$ on $X$. Now we introduce a new topology $\mathcal{T}$ on $X$ which is
introduced by the norm
$$
\|u\|_{\T}=\max \{ \| u^+ \|, \sum_{k=1}^\infty \frac{1}{2^{k+1}}     |\langle u^-, e_k\rangle| \},
$$
where $\{e_k\}_{k=1}^{+\infty}$ is a complete orthonormal system in $X^-$ \cite{KS,W}. Observe that for any $u\in X$,
$$
\| u^+ \|\leq \|u\|_{\T}\leq \|u\|.
$$
The convergence of a sequence $\{u_n\}\subset X$ in $\T$ will be denoted by
$u_n\stackrel{\T}{\longrightarrow}u$.
Obviously, the new topology $\mathcal{T}$ is closely related to the topology on $X$ which is strong on $X^+$ and weak on $X^-$. More precisely, if  $\{u_n\}\subset X$ is bounded, then
\begin{equation}\label{v1}
u_n\stackrel{\T}{\longrightarrow}u\quad\Leftrightarrow\quad u^{+}_n\rightarrow u^+\quad\text{and}\quad u^{-}_n\rightharpoonup u^-.
\end{equation}

We will show that the functional $J_\rho$ satisfies the following conditions:
\begin{itemize}

\item[(A1)] For $\rho\geq 0$, $J_\rho$ is $\mathcal{T}-$upper semicontinuous, i.e. $J_\rho^{-1}([t, +\infty))$ is $\T$-closed for any $t\in \R$;\\

\item[(A2)] For $\rho\geq 0$, $J'_\rho$ is $\mathcal{T}-$to-weak$^*$ continuous, i.e. $J'_\rho(u_n)\rightharpoonup J'_\rho(u)$ as $u_n\stackrel{\T}{\longrightarrow}u_0$;\\

\item[(A3)] For $0\leq \rho< \tilde{\rho}^+$, there exists $r>0$ such that $m:=\inf\limits_{u\in X^+: \|u\|=r} J_\rho(u)>0$.\\

\item[(A4)] For $0\leq \rho< \tilde{\rho}^+$, if $u\in X\backslash X^-$, then there exists $R(u)>r$ such that
$$
\sup\limits_{\partial M(u)} J_\rho\leq J_\rho(0)=0,
$$
where $M(u)=\{  tu+v\in X| v\in   X^-,\, \| tu+v\| \leq R(u),\, t\geq 0  \}\subset \R^+ u\,\oplus X^-=\R^+ u^+\oplus X^-$ with $\R^+=[0,+\infty);$

\item[(A5)] For $\rho \geq0$, if $u\in N_\rho$, then $J_\rho(u)\geq J_\rho(tu+v)$ for $t\geq 0$ and $v\in X^-$.

\end{itemize}

Note that the conditions (A3) and (A4) imply that the functional $J_\rho$ satisfies the linking geometry. Hence, we introduce a generalized linking theorem.
For any $A\subset X,\,I\subset[0,+\infty)$ such that $0\in I$, and $h:A\times I\rightarrow X$, we collect the following assumptions:
\begin{enumerate}
\item[(h1):] $h$ is $\T$-continuous (with respect to norm $\|\cdot\|_{\mathcal{T}}$);\\

\item[(h2):] $h(u,0)=u$ for all $u\in A$;\\

\item[(h3):] $J_\rho(u)\geq J_\rho(h(u,t))$ for all $(u,t)\in A\times I$;\\

\item[(h4):] each $(u,t)\in A\times I$ has an open neighborhood $W$ in the product topology of
$(X, \T)$ and $I$ such that the set $\{   v-h(v,s):(v,s)\in W\cap( A\times I) \}$
is contained in a finite-dimensional subspace of $X$.
\end{enumerate}

Now we state the linking theorem, which can be seen in \cite{M,W}.
\begin{thm}\label{prop-2.2}

If $J_\rho\in C^1(X, \R)$ satisfies (A1)-(A4), then there exists a Cerami sequence $\{u_n\}$ at level $c_\rho$,
that is, $J_\rho(u_n)\rightarrow c$ and $(1+\| u_n \|)J_\rho'(u_n)\rightarrow 0$, where
$$
c_\rho:=\inf\limits_{u\in X\backslash X^-} \inf\limits_{h\in \Gamma(u)} \sup\limits_{u'\in M(u)} J_\rho(h(u',1))\geq m>0,
$$

$$\Gamma(u):=\{h: M(u)\times[0,1]\rightarrow X ~\text{satisfies~} (\text{h}1)-(\text{h}4)\}.$$
Suppose that (A5) holds, then $c_\rho\leq \inf\limits_{N_\rho} J_\rho$. If $c_\rho\geq J_\rho(u)$ for some critical point
$u\in X\backslash X^-$, then $c_\rho=\inf\limits_{N_\rho}J_\rho.$

\end{thm}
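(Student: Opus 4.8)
\emph{Overview.} This is the abstract linking theorem underlying the Kryszewski--Szulkin / Willem / Mederski method for strongly indefinite functionals, and I would prove it exactly along those lines (cf.\ \cite{KS,M,W}): a \emph{linking} (intersection) estimate gives the lower bound $c_\rho\ge m>0$, a \emph{quantitative deformation lemma compatible with the topology $\T$} produces the Cerami sequence by contradiction, and the comparison with $N_\rho$ is then elementary. Throughout write $P,Q$ for the projections of $X$ onto $X^-$ and $X^+$ (Lemma \ref{jj}), and $S_r^+:=\{w\in X^+:\|w\|=r\}$ with $r$ the radius from (A3); note $z\in S_r^+$ iff $Pz=0$ and $\|Qz\|=r$.

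\emph{Step 1: intersection estimate and $c_\rho\ge m$.} Fix $u\in X\backslash X^-$ and $h\in\Gamma(u)$. The geometric core is that $h(M(u),1)\cap S_r^+\ne\emptyset$; granting it, picking $u'\in M(u)$ with $h(u',1)\in S_r^+$ yields $\sup_{M(u)}J_\rho(h(\cdot,1))\ge J_\rho(h(u',1))\ge\inf_{S_r^+}J_\rho=m$ by (A3), and the double infimum gives $c_\rho\ge m>0$. For the intersection claim I would run the classical finite-dimensional degree argument: by (h4) the increments $\{w-h(w,1):w\in M(u)\}$ span a finite-dimensional subspace $F$, which (replacing $F$ by $F+P(F)$ and adjoining $u^+$) may be taken $P,Q$-invariant and containing $u^+$; then ``$h(w,1)\in S_r^+$'', i.e.\ $Ph(w,1)=0$ and $\|Qh(w,1)\|=r$, becomes a system between spaces of equal finite dimension on the compact set $K:=M(u)\cap(\R^+u^+\oplus(F\cap X^-))$, and every solution in $M(u)$ in fact lies in $K$. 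By (h3) and (A4), $J_\rho(h(w,t))\le J_\rho(w)\le 0<m$ for $w\in\partial M(u)$, so $h(w,t)\notin S_r^+$ there and the homotopy of maps $w\mapsto(Ph(w,t),\|Qh(w,t)\|-r)$, $t\in[0,1]$, never vanishes on the boundary; since at $t=0$ this system has the single interior solution $(r/\|u^+\|)\,u^+$ (using $R(u)>r$), its Brouwer degree is nonzero, hence there is a solution at $t=1$, giving a point of $h(M(u),1)\cap S_r^+$ (cf.\ \cite{KS,W}).

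\emph{Step 2: existence of a Cerami sequence --- the main obstacle.} Suppose no Cerami sequence exists at level $c_\rho$. Then there are $\varepsilon,\delta>0$ (we may take $2\varepsilon<c_\rho$) with $(1+\|u\|)\|J_\rho'(u)\|\ge\delta$ whenever $|J_\rho(u)-c_\rho|\le 2\varepsilon$. Using (A1) (so the strip $\{|J_\rho-c_\rho|\le2\varepsilon\}$ is $\T$-closed) and (A2) (so that for fixed $w\in X$ and $\gamma\in\R$ the inequality $\langle J_\rho'(\cdot),w\rangle>\gamma$ defines a $\T$-open set), I would build, via a $\T$-locally finite partition of unity, a $\T$-locally Lipschitz pseudo-gradient field $\mathcal W$ for $J_\rho$ that vanishes off the strip, satisfies $\langle J_\rho'(u),\mathcal W(u)\rangle\ge\tfrac{\delta}{4}(1+\|u\|)^{-1}$ and $\|\mathcal W(u)\|\le(1+\|u\|)^{-1}$ on the strip, and is, locally in $u$, a finite linear combination of fixed vectors --- this last feature being what forces (h4) for the associated flow. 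Let $\eta:X\times[0,1]\to X$ be the flow of $-\mathcal W$, cut off to stop on $\{J_\rho\le c_\rho-\varepsilon\}$. One checks in the standard way that $\eta$ is $\T$-continuous, $t\mapsto J_\rho(\eta(u,t))$ is non-increasing, $\eta(\cdot,t)=\operatorname{id}$ on $\{|J_\rho-c_\rho|\ge2\varepsilon\}$, $\eta(\{J_\rho\le c_\rho+\varepsilon\},1)\subset\{J_\rho\le c_\rho-\varepsilon\}$, and every $(u,t)$ has a $\T$-neighborhood on which the increments $v-\eta(v,s)$ lie in a finite-dimensional subspace. Choosing $h\in\Gamma(u)$ with $\sup_{M(u)}J_\rho(h(\cdot,1))<c_\rho+\varepsilon$ (possible, since $c_\rho$ is an infimum) and setting $\widehat h(u',t):=\eta(h(u',t),t)$, one verifies $\widehat h\in\Gamma(u)$: (h1)--(h3) are immediate, and for (h4) write $u'-\widehat h(u',t)=\big(u'-h(u',t)\big)+\big(h(u',t)-\eta(h(u',t),t)\big)$ and combine the finite-dimensionality for $h$ with that for $\eta$ (transported along the $\T$-continuous map $h$). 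But $\widehat h(u',1)=\eta(h(u',1),1)\in\{J_\rho\le c_\rho-\varepsilon\}$ for every $u'\in M(u)$, so $\sup_{M(u)}J_\rho(\widehat h(\cdot,1))\le c_\rho-\varepsilon<c_\rho$, contradicting the definition of $c_\rho$. I expect this step to be the main obstacle: the whole difficulty is to make the pseudo-gradient field and its flow simultaneously $\T$-continuous and of finite-dimensional increment type, which is exactly the setting in which the weak topology on the infinite-dimensional summand $X^-$ and the hypotheses (A1)--(A2) are indispensable, and one must be careful that the rescaling by $(1+\|u\|)$ keeps the flow globally defined.

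\emph{Step 3: comparison with the Nehari--Pankov level and the final assertion.} The identity map $(u',t)\mapsto u'$ belongs to $\Gamma(u)$ for every $u\in X\backslash X^-$ (conditions (h1)--(h4) hold trivially, (h4) with the zero subspace), hence $c_\rho\le\sup_{u'\in M(u)}J_\rho(u')$. If (A5) holds and $u\in N_\rho$, each point of $M(u)$ has the form $tu+v$ with $t\ge0$, $v\in X^-$, so (A5) gives $J_\rho(tu+v)\le J_\rho(u)$, whence $\sup_{M(u)}J_\rho\le J_\rho(u)$ and $c_\rho\le J_\rho(u)$; taking the infimum over $N_\rho$, $c_\rho\le\inf_{N_\rho}J_\rho$. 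Finally, if $c_\rho\ge J_\rho(u)$ for some critical point $u\in X\backslash X^-$, then $J_\rho'(u)=0$ makes all the pairings defining $N_\rho$ vanish, so $u\in N_\rho$ and $\inf_{N_\rho}J_\rho\le J_\rho(u)\le c_\rho$; combined with the previous bound, $c_\rho=\inf_{N_\rho}J_\rho$.
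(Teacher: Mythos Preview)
The paper does not prove Theorem \ref{prop-2.2}; it is stated as a quoted result with the remark ``which can be seen in \cite{M,W}'' and is used as a black box thereafter. Your proposal correctly reconstructs the standard Kryszewski--Szulkin / Willem / Mederski argument from those very references --- the finite-dimensional degree argument for the intersection $h(M(u),1)\cap S_r^+\ne\emptyset$, the $\T$-compatible deformation lemma producing a Cerami sequence by contradiction, and the elementary comparison with $N_\rho$ via the identity deformation and (A5) --- so your approach is precisely the one the paper is implicitly invoking, and your sketch is sound.
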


Now we are devoted to verify the conditions (A1)-(A5) so as to apply the linking theorem \ref{prop-2.2}. First, we show that $J_\rho$ satisfies (A1)-(A2).

\begin{lm}\label{lmaa}

Let $\rho\geq 0$.  Then
 $J_\rho$ is $\mathcal{T}-$upper semicontinuous and $J'_\rho$ is $\mathcal{T}-$to-weak$^*$ continuous.

\end{lm}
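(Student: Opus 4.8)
The plan is to verify the two topological properties separately, in each case reducing to the behavior of bounded $\T$-convergent sequences via the characterization \eqref{v1}. Throughout I will use the decomposition $J_\rho(u) = \frac12\|u^+\|^2 - \frac12\|u^-\|^2 - \frac12\int_{\mathbb{Z}^N}\frac{\rho}{|x|^2+1}|u|^2\,d\mu - \int_{\mathbb{Z}^N}F(x,u)\,d\mu$ and treat the quadratic part and the nonlinear part independently.

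For $\T$-upper semicontinuity I must show $J_\rho^{-1}([t,+\infty))$ is $\T$-closed. Since $X$ is a Hilbert space and the $\T$-topology is metrizable on bounded sets, it suffices to take a sequence $\{u_n\}$ in the sublevel set with $u_n \xrightarrow{\T} u$ and show $J_\rho(u) \geq t$; one first checks that such a sequence is automatically bounded in $\|\cdot\|$ (if $\|u_n\|\to\infty$ then, since $u_n^+\to u^+$ is bounded, $\|u_n^-\|\to\infty$, forcing $J_\rho(u_n)\to-\infty$ because the nonlinear term and Hardy term are nonnegative by \eqref{30}, contradicting $J_\rho(u_n)\geq t$). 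Then by \eqref{v1} we have $u_n^+\to u^+$ strongly and $u_n^-\rightharpoonup u^-$ weakly. The term $\frac12\|u_n^+\|^2$ converges; the term $-\frac12\|u_n^-\|^2$ is upper semicontinuous under weak convergence since $\|u^-\|\leq\liminf\|u_n^-\|$; the Hardy term $-\frac12\int\frac{\rho}{|x|^2+1}|u_n|^2$ and the term $-\int F(x,u_n)$ need a lower-semicontinuity (Fatou-type) argument — here one uses that on the lattice, bounded $\ell^2$-convergence plus weak convergence gives pointwise convergence $u_n(x)\to u(x)$ for every fixed $x$ (a bounded sequence in $\ell^2$ has $u_n(x)$ bounded, and weak convergence against $\delta_x$ gives the limit), so Fatou's lemma applied to the counting measure yields $\liminf\int\frac{\rho}{|x|^2+1}|u_n|^2 \geq \int\frac{\rho}{|x|^2+1}|u|^2$ and $\liminf\int F(x,u_n)\geq\int F(x,u)$ using $F\geq 0$. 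Combining gives $\limsup J_\rho(u_n)\leq J_\rho(u)$, hence $J_\rho(u)\geq t$.

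For the $\T$-to-weak$^*$ continuity of $J'_\rho$, take $u_n\xrightarrow{\T}u$ with $\{u_n\}$ bounded (the relevant sequences in applications are bounded; alternatively one argues on bounded pieces), so $u_n^+\to u^+$ strongly and $u_n^-\rightharpoonup u^-$. I must show $\langle J'_\rho(u_n),\phi\rangle\to\langle J'_\rho(u),\phi\rangle$ for each fixed $\phi\in X$. Splitting $\langle J'_\rho(u_n),\phi\rangle = (u_n^+,\phi^+) - (u_n^-,\phi^-) - \int\frac{\rho}{|x|^2+1}u_n\phi - \int f(x,u_n)\phi$: the first term converges by strong convergence, the second by weak convergence. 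For the Hardy term, approximate $\phi$ by finitely supported $\psi$ (using that $\int\frac{\rho}{|x|^2+1}u_n\psi$ is then a finite sum and $u_n\to u$ pointwise, while the tail is controlled uniformly by Cauchy–Schwarz, Lemma \ref{04}, and $\|u_n\|_2$ bounded). For the nonlinear term, use the growth bound \eqref{nn}: again truncate $\phi$ to finite support, where pointwise convergence $u_n(x)\to u(x)$ and continuity of $f$ (F1)–(F3) give convergence of the finite sum, and the tail $\sum_{|x|>R}|f(x,u_n)||\phi(x)|\leq \varepsilon\|u_n\|_2\|\phi\|_{\ell^2(|x|>R)} + c_\varepsilon\|u_n\|_p^{p-1}\|\phi\|_{\ell^p(|x|>R)}$ is uniformly small (using $\ell^2\hookrightarrow\ell^p$ in the discrete setting and boundedness of $\{u_n\}$). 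This gives $J'_\rho(u_n)\rightharpoonup J'_\rho(u)$.

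The main obstacle is the handling of the nonlinear and Hardy-weight terms under merely weak convergence of the $X^-$-component: these terms are not weakly continuous a priori, so the argument genuinely relies on upgrading bounded weak $\ell^2$-convergence to pointwise convergence on the lattice (a discreteness feature) and then combining Fatou (for semicontinuity in (A1)) or a truncation-plus-uniform-tail-estimate (for the derivative in (A2)). Establishing the automatic boundedness of $\T$-convergent sublevel sequences and making the tail estimates genuinely uniform in $n$ are the two points requiring care.
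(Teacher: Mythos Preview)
Your proposal is correct and follows essentially the same route as the paper. For part (i) both arguments coincide: establish boundedness of any $\T$-convergent sublevel sequence from $\|u_n^-\|^2 \le \|u_n^+\|^2 - 2t$, upgrade to weak convergence in $X$ and pointwise convergence on $\mathbb{V}$, then invoke weak lower semicontinuity of $\|\cdot\|$ and Fatou for the Hardy and $F$ terms.

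The only noteworthy difference is in part (ii). The paper tests $J'_\rho(u_n)$ directly against $\phi\in C_c(\mathbb{V})$: since $\operatorname{supp}(\phi)\subset B_r$ is finite, every term in $\langle J'_\rho(u_n)-J'_\rho(u),\phi\rangle$ becomes a finite sum, and pointwise convergence $u_n\to u$ together with (F2) finishes immediately. Density of $C_c(\mathbb{V})$ in $X$ (plus boundedness of $\{J'_\rho(u_n)\}$ in $X^*$) then gives weak$^*$ convergence. Your version instead fixes a general $\phi\in X$, truncates it to finite support, and controls the tails uniformly via \eqref{nn} and the Hardy inequality. This is equally valid but slightly more laborious; the paper's shortcut of working only with compactly supported test functions is the cleaner choice on a lattice. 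Your observation that the boundedness hypothesis in (ii) is not automatic from $\T$-convergence alone is accurate---the paper, like your proposal, effectively proves (A2) on bounded sequences, which is all that the linking theorem requires.
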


\begin{proof}
Assume that $u_n\stackrel{\T}{\longrightarrow}u$. Let $t\in \R$ such that
$$J_\rho(u_n)=\frac{1}{2}(\|u_n^+\|^2-\|u_n^-\|^2)-\frac{1}{2}\int_{\mathbb{V}}\frac{\rho}{(|x|^2+1)}|u_n|^2\,d\mu-\int_{\mathbb{V}}F(x,u_n)\,d\mu\geq t.$$
It is clear that $\|u^{+}_n\|$ is bounded; Since $\| u_n^- \|^2\leq\| u_n^+ \|^2-2t$, $\| u^-_n \|$ is bounded and hence $\| u_n \|$ is bounded. Passing to a subsequence if necessary,
\begin{eqnarray*}\label{aa}
u_n\rightharpoonup u,\quad \text{in~} X,\qquad\text{and}\qquad u_n\rightarrow u,\quad\text{ pointwise~in}\,\mathbb{V}.
\end{eqnarray*}

(i)
By (\ref{v1}), the weak lower semicontinuity of $\|\cdot\|$ and the Fatou lemma, we obtain that
$$J_\rho(u)=\frac{1}{2}(\|u^+\|^2-\|u^-\|^2)-\frac{1}{2}\int_{\mathbb{V}}\frac{\rho}{(|x|^2+1)}|u|^2\,d\mu-\int_{\mathbb{V}}F(x,u)\,d\mu\geq t.
$$

(ii) It is sufficient to show that for any $\phi\in C_{c}(\mathbb{V})$, $\underset{n\rightarrow+\infty}{\lim}\langle J_\rho'(u_n),\phi\rangle=\langle J_{\rho}'(u),\phi\rangle.$

Assume that $\text{supp}(\phi)\subset B_{r}$ with $r\geq1$. Since $B_{r+1}$ is a finite set in $\mathbb{V}$, $u_n\rightarrow u $ pointwise in $\mathbb{V}$ as $n\rightarrow+\infty$ and the assumption (F2), we get that
\begin{eqnarray*}
\langle J_\rho'(u_n),\phi\rangle-\langle J_\rho'(u),\phi\rangle&=&
\frac{1}{2}\sum\limits_{x\in B_{r+1}}\sum\limits_{y\sim x}[(u_n-u)(y)-(u_n-u)(x)](\phi(y)-\phi(x))\\&&+\sum\limits_{x\in B_{r}}(V(x)-\frac{\rho}{(|x|^2+1)})(u_{n}-u)(x)\phi(x)\\&&-\sum\limits_{x\in B_{r}}(f(x,u_n)-f(x,u))\phi(x)
\\&\rightarrow& 0, \qquad  n\rightarrow+\infty.
\end{eqnarray*}

\end{proof}

Then, for $0\leq \rho < \frac{\tilde{\rho}^+}{\kappa}$, we prove that $J_\rho$ satisfies (A3)-(A4).
\begin{lm}\label{lm-2.7}

Let $0\leq \rho < \frac{\tilde{\rho}^+}{\kappa}$. Then for any $u_0\in X\backslash X^-$, there exist $R(u_0)>r>0$ such that
\begin{eqnarray*}
m:=\inf\limits_{u\in X^+: \|u\|=r} J_\rho (u)> J_\rho (0)=0\geq \sup\limits_{\partial M(u_0)}  J_\rho(u),
\end{eqnarray*}
where $M(u_0)=\{u=tu_0+v\in X:\, v\in   X^-,\, \|u\| \leq R(u_0),\, t\geq 0  \}\subset \R^+ u_0\,\oplus X^-.$

\end{lm}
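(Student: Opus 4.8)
The content of the lemma is exactly the linking geometry conditions (A3) and (A4) for $J_\rho$, so I would verify the two halves separately. For (A4) I would first observe, as noted in the text, that $M(u_0)\subset\mathbb{R}^+u_0\oplus X^-=\mathbb{R}^+u_0^+\oplus X^-$ and that rescaling $u_0^+$ to unit norm does not change this cone; hence, writing $e:=u_0^+/\|u_0^+\|\in X^+$ (which is nonzero since $u_0\notin X^-$) and $s:=t\|u_0^+\|$, we may assume $u_0=e$ with $\|e\|=1$, and $M(e)=\{se+v:\ s\ge0,\ v\in X^-,\ \|se+v\|\le R\}$. The boundary of $M(e)$ in $\mathbb{R}^+e\oplus X^-$ splits into the face $\{v\in X^-:\ \|v\|\le R\}$ (where $s=0$) and the spherical part $S_R:=\{se+v:\ s\ge0,\ v\in X^-,\ s^2+\|v\|^2=R^2\}$ (here $s^2+\|v\|^2=\|se+v\|^2$ by orthogonality of $e\in X^+$ and $v\in X^-$).

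\emph{Condition (A3).} For $u\in X^+$ one has $u^-=0$, so $J_\rho(u)=\tfrac12\|u\|_\rho^2-\int_{\mathbb V}F(x,u)\,d\mu$ with $\|\cdot\|_\rho$ as in Lemma \ref{lm-3.4}. I would then chain three estimates: (i) Lemma \ref{lm-3.4} gives $\|u\|_\rho^2\ge\tfrac12(\tilde\rho^+-\kappa\rho)\|u\|^2$, positive since $\rho<\tilde\rho^+/\kappa$; (ii) Lemma \ref{lm-2.4} gives $\int_{\mathbb V}F(x,u)\,d\mu\le\varepsilon\|u\|_2^2+C_\varepsilon\|u\|_p^p$; (iii) on $X^+$, $\|u\|^2=(Au,u)_2\ge\sigma^+\|u\|_2^2$, so $\|u\|_2^2\le(\sigma^+)^{-1}\|u\|^2$, and in the discrete setting $\|u\|_p\le\|u\|_2$ for $p\ge2$. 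Combining these, $J_\rho(u)\ge\big(\tfrac14(\tilde\rho^+-\kappa\rho)-\varepsilon(\sigma^+)^{-1}\big)\|u\|^2-C_\varepsilon(\sigma^+)^{-p/2}\|u\|^p$; choosing $\varepsilon$ small makes the quadratic coefficient a positive constant $c_1$, and since $p>2$, for $\|u\|=r$ with $r$ small enough one gets $J_\rho(u)\ge c_1 r^2-c_2 r^p\ge \tfrac12 c_1 r^2=:m>0$. This also gives $J_\rho(u)>0=J_\rho(0)$ on that sphere.

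\emph{Condition (A4).} On the face $\{v\in X^-:\ \|v\|\le R\}$ we have $J_\rho(v)=-\tfrac12\|v\|^2-\tfrac12\int_{\mathbb V}\tfrac{\rho}{|x|^2+1}|v|^2\,d\mu-\int_{\mathbb V}F(x,v)\,d\mu\le0$, each term being nonpositive by $\rho\ge0$ and $F\ge0$ (from \eqref{30}); in particular its supremum is $J_\rho(0)=0$. It remains to choose $R>r$ with $\sup_{S_R}J_\rho\le0$. Suppose not: then for $R_n=n\to\infty$ there is $u_n=s_ne+v_n\in S_{R_n}$ with $J_\rho(u_n)>0$. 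Dropping the nonpositive $\rho$- and $F$-terms except one, $0<J_\rho(u_n)\le\tfrac12(s_n^2-\|v_n\|^2)-\int_{\mathbb V}F(x,u_n)\,d\mu$, which with $F\ge0$ forces $s_n^2>\|v_n\|^2$ (hence $(s_n/R_n)^2>\tfrac12$) and $\int_{\mathbb V}F(x,u_n)\,d\mu<\tfrac12 R_n^2$. Normalizing $w_n:=u_n/R_n$, one has $w_n^+=(s_n/R_n)e$, so along a subsequence $s_n/R_n\to\alpha\ge1/\sqrt2$ and $w_n\rightharpoonup w$ in $X$ with $w^+=\alpha e\ne0$; since weak convergence in $\ell^2$ on a lattice is pointwise, $w_n\to w$ pointwise and $w\ne0$, so there is $x_0$ with $w(x_0)\ne0$. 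Then $|u_n(x_0)|=R_n|w_n(x_0)|\to\infty$, so (F4) gives $F(x_0,u_n(x_0))/u_n(x_0)^2\to+\infty$, whence
$$\frac{1}{R_n^2}\int_{\mathbb V}F(x,u_n)\,d\mu\ \ge\ \frac{F(x_0,u_n(x_0))}{R_n^2}\ =\ \frac{F(x_0,u_n(x_0))}{u_n(x_0)^2}\,w_n(x_0)^2\ \longrightarrow\ +\infty,$$
contradicting $\int_{\mathbb V}F(x,u_n)\,d\mu<\tfrac12R_n^2$. Hence some $R(u_0)>r$ works, and $\sup_{\partial M(u_0)}J_\rho\le0=J_\rho(0)$.

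\emph{Main obstacle.} The delicate point is the estimate on $S_R$: one must prevent the normalized sequence $w_n$ from either concentrating its mass in $X^-$ or spreading out, so that (F4) can be exploited at a single vertex. This is exactly what the a priori bound $s_n^2>\|v_n\|^2$ buys us — it keeps the $X^+$-component of the weak limit away from $0$ — combined with the fact that bounded $\ell^2$-sequences on $\mathbb Z^N$ converge pointwise along a subsequence. The remaining pieces, namely (A3) near the origin and the sign of $J_\rho$ on the $X^-$-face, are routine once Lemmas \ref{lm-3.4} and \ref{lm-2.4} and the positivity $F\ge0$ are in hand.
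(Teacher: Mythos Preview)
Your proof is correct and follows essentially the same linking-geometry argument as the paper: Lemmas~\ref{lm-3.4} and~\ref{lm-2.4} for (A3), and a contradiction via a normalized sequence $u_n/\|u_n\|$ for (A4). Your version is slightly more streamlined --- you avoid the paper's case split on $s=0$ versus $s\neq0$ by directly deducing $(s_n/R_n)^2>\tfrac12$ from $J_\rho(u_n)>0$ and $F\ge0$, and you exploit (F4) at a single vertex rather than via Fatou over all of $\mathbb V$ --- but these are minor presentational simplifications within the same overall strategy.
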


\begin{proof} For $u\in X^+$, by  Lemma \ref{lm-2.4} and Lemma \ref{lm-3.4}, we get that
\begin{eqnarray*}
J_\rho (u) &\geq &
\frac{1}{4}(\tilde{\rho}^+- \kappa\rho) \| u \|^2-\int_{\mathbb{V}}F(x, u)\,d\mu\\
& \geq & \frac{1}{4}(\tilde{\rho}^+- \kappa\rho)\| u \|^2-\varepsilon \|u\|_2^2-C_\varepsilon \|u\|_p^p.
\end{eqnarray*}
Note that $\|\cdot\|$ is equivalent to $\|\cdot\|_2$ on $X^+$ and $\|u\|_{p}\leq\|u\|_{2}$ for $p>2$. Hence, for $\varepsilon>0$ small enough, there exists $r>0$ small enough such that
$$
m:=\inf\limits_{u\in X^+: \|u\|=r} J_\rho (u)> J_\rho (0)=0.
$$

Now we prove that $\sup\limits_{\partial M(u_{0})}  J_\rho(u)\leq 0$. For $u_0\in X\backslash X^-$, since $\R^+ u_0\,\oplus X^-=\R^+ u_0^+\oplus X^-$, we may assume that $u_0\in X^+$. Arguing indirectly, assume that for some sequence $\{u_n\}\subset \R^+ u_0\,\oplus X^-$ with $\|u_n\|\rightarrow+\infty$ such that $J_\rho(u_n)>0$. Let $z_n=\frac{u_n}{\|u_n\|}=s_n u_0+z_{n}^-$, then $\|s_n u_0+z_n^-\|=1$. Passing to a subsequence, we assume that $s_n\rightarrow s,\,z_{n}^-\rightharpoonup z^-$ and $
z_{n}^-\rightarrow z^-$ pointwise in $\mathbb{V}$. Hence,
\begin{eqnarray}\label{05}
0&<&\frac{J_\rho (u_n)}{\|u_n\|^2}\nonumber\\&=&\frac{1}{2}(s_n^2\|u_0\|^2-\|z_n^-\|^2-\int_\mathbb{V}\frac{\rho}{(|x|^2+1)}|z_n|^2 \,d\mu)-
\int_{\mathbb{V}}\frac{F(x,u_n)}{|u_n|^{2}}z_n^{2}\,d\mu\nonumber\\&\leq&
\frac{1}{2}(s_n^2\|u_0\|^2-\|z_n^-\|^2)-\int_{\mathbb{V}}\frac{F(x,u_n)}{|u_n|^{2}}z_n^{2}\,d\mu.
\end{eqnarray}

If $s=0$, then it follows from (\ref{05}) that
\begin{eqnarray*}
0\leq\frac{1}{2}\|z_n^-\|^2+\int_{\mathbb{V}}\frac{F(x,u_n)}{|u_n|^{2}}z_n^{2}\,d\mu\leq\frac{1}{2}s_n^2\|u_0\|^2\rightarrow 0,
\end{eqnarray*}
which yields that $\|z_n^-\|\rightarrow 0$, and hence $1=\|s_n u_0+z_n^-\|^2\rightarrow 0$.
This is a contradiction.

If $s\neq 0$, since $\|u_n\|\rightarrow+\infty$, by (\ref{05}) and (F4), we get that
\begin{eqnarray*}
0&\leq&\underset{n\rightarrow+\infty}{\lim\sup}~[\frac{1}{2}s_n^2\|u_0\|^2-\int_{\mathbb{V}}\frac{F(x,u_n)}{|u_n|^{2}}z_n^{2}\,d\mu]\\&\leq&	\frac{1}{2}s^2\|u_0\|^2-\underset{n\rightarrow+\infty}{\lim\inf}~\int_{\mathbb{V}}\frac{F(x,u_n)}{|u_n|^{2}}z_n^{2}\,d\mu\\&\leq&\frac{1}{2}s^2\|u_0\|^2-\int_{\mathbb{V}}\underset{n\rightarrow+\infty}{\lim\inf}~\frac{F(x,u_n)}{|u_n|^{2}}z_n^{2}\,d\mu\\&\rightarrow& -\infty.
\end{eqnarray*}
This is impossible. Hence we complete the proof.

\end{proof}

Lemma \ref{lm-2.7} implies that the Nehari Manifold $N_\rho\neq\emptyset$.
\begin{crl}\label{lm-5.0}

If $0\leq\rho < \frac{\tilde{\rho}^+}{\kappa}$, then for any $u_0\in X\backslash X^-$, there exist $t>0$ and $v\in X^-$ such that $tu_0+v\in N_\rho$.

\end{crl}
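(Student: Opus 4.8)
The plan is to show that for a fixed $u_0 \in X \setminus X^-$ — which, since $\R^+ u_0 \oplus X^- = \R^+ u_0^+ \oplus X^-$, we may assume lies in $X^+$ — the functional $J_\rho$ restricted to the finite-slice $M(u_0) = \{tu_0 + v : v \in X^-,\ \|tu_0+v\| \le R(u_0),\ t \ge 0\}$ attains its maximum at an interior point, and that this maximum point is exactly the desired element of $N_\rho$. First I would invoke Lemma \ref{lm-2.7}: it gives $R(u_0) > r > 0$ with $J_\rho \le 0$ on $\partial M(u_0)$, while $J_\rho(su_0) \ge m > 0$ for the sphere radius $r$, so in particular $\sup_{M(u_0)} J_\rho \ge m > 0 = J_\rho(0) \ge \sup_{\partial M(u_0)} J_\rho$. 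Hence the supremum of $J_\rho$ over $M(u_0)$ is strictly positive and is therefore not attained on the boundary nor at the origin.

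Next I would argue that the supremum over $M(u_0)$ is actually attained. The set $M(u_0)$ lives in the finite-dimensional space $\R u_0 \oplus X^-$ only after noting $X^-$ may be infinite-dimensional, so this needs the $\T$-topology: $M(u_0)$ is $\T$-compact (it is $\T$-closed and bounded, and on bounded sets $\T$-convergence is equivalent to strong convergence on $X^+$ together with weak convergence on $X^-$ by \eqref{v1}), and $J_\rho$ is $\T$-upper semicontinuous by Lemma \ref{lmaa} (condition (A1)). An upper semicontinuous function on a compact set attains its maximum; call the maximizer $w = tu_0 + v$. By the paragraph above, $w$ is an interior point of $M(u_0)$ with $t > 0$ (if $t = 0$ then $w \in X^-$ and $J_\rho(w) \le 0$, contradicting $J_\rho(w) = \sup > 0$).

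Since $w$ is an interior maximizer of $J_\rho$ over $M(u_0)$, which contains an open neighborhood of $w$ inside $\R^+ u_0 \oplus X^- = \R^+ u_0^+ \oplus X^-$, all directional derivatives of $J_\rho$ at $w$ vanish along that subspace: $\langle J'_\rho(w), u_0^+\rangle = 0$ and $\langle J'_\rho(w), \phi\rangle = 0$ for all $\phi \in X^-$. Because $w = tu_0^+ + v$ with $t > 0$ and $v \in X^-$, the first relation combined with the second (applied to $\phi = v$) gives $\langle J'_\rho(w), w\rangle = t\langle J'_\rho(w), u_0^+\rangle + \langle J'_\rho(w), v\rangle = 0$, and $\langle J'_\rho(w), \phi\rangle = 0$ for all $\phi \in X^-$ is exactly the second defining condition of $N_\rho$. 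Finally $w \in X \setminus X^-$ since its $X^+$-component is $tu_0^+ \ne 0$. Thus $w = tu_0 + v \in N_\rho$ with $t > 0$ and $v \in X^-$, as claimed.

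The main obstacle I anticipate is the compactness/attainment step: one must be careful that the maximizer is found using $\T$-compactness of $M(u_0)$ together with $\T$-upper semicontinuity of $J_\rho$ rather than naively treating $\R u_0 \oplus X^-$ as finite-dimensional, and then to check that the maximizer is genuinely interior (not on $\partial M(u_0)$, where $J_\rho \le 0$) so that the first-order conditions may be read off in all directions of the slice. Everything else is a direct consequence of Lemma \ref{lm-2.7}, Lemma \ref{lmaa}, and the characterization \eqref{v1} of $\T$-convergence on bounded sets.
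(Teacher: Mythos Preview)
Your proposal is correct and follows essentially the same approach as the paper: maximize $J_\rho$ over the half-space $\R^+ u_0^+\oplus X^-$ (or a bounded piece of it) using weak-type compactness, then invoke Lemma~\ref{lm-2.7} to force $t>0$ so that the first-order conditions give membership in $N_\rho$. The only cosmetic difference is that the paper phrases the attainment step by noting that $\xi(t,v)=-J_\rho(tu_0+v)$ is bounded below, coercive, and weakly lower semicontinuous on $\R^+\times X^-$, rather than working with $\T$-compactness of $M(u_0)$ and $\T$-upper semicontinuity of $J_\rho$ as you do; the two formulations are equivalent here.
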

\begin{proof}
For $u_0\in X\backslash X^-$, since $\R^+ u_0\,\oplus X^-=\R^+ u_0^+\oplus X^-$, we may assume that $u_0\in X^+$, then $tu_0\in X^+$.
Consider a map $\xi: \R^+\times X^-\rightarrow\R$ in the form
$$\xi(t,v)=-J_\rho(t u_0+v),$$
where $$J_\rho(u)=\frac{1}{2}\|u^+\|^2-\frac{1}{2}\|u^-\|^2-\frac{1}{2}\int_{\mathbb{V}}\frac{\rho}{(|x|^2+1)}|u|^2\,d\mu-\int_{\mathbb{V}}F(x,u)\,d\mu.$$
Observe that $\xi$ is bounded from below, coercive and weakly lower semicontinuous for $\rho\geq 0$.
Hence there exist $t\geq0$ and $v\in X^-$ such that $J_{\rho}(tu_0+v)=\sup\limits_{\R^+ u_0 \oplus X^-} J_{\rho}(u).$
By Lemma \ref{lm-2.7}, one gets that $t>0$, and hence $tu_0+v\in N_\rho$.

\end{proof}

The following lemma implies the condition (A5).
\begin{lm}\label{lm-2.8}

Let $\rho\geq0$. For any $u\in X\backslash X^-$,
\begin{eqnarray*}
J_\rho (u)\geq J_\rho(tu+v)- \langle J'_\rho(u),(\frac{t^2-1}{2}u+tv)\rangle,\qquad t\geq0,\quad v\in X^-.
\end{eqnarray*}

\end{lm}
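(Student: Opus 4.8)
The plan is to verify the inequality by a direct computation: expand both sides of the claimed estimate using the explicit quadratic-plus-remainder form of $J_\rho$ and the formula for $J_\rho'$, and reduce everything to a pointwise inequality on the nonlinear term that follows from (F5). First I would write $u = u^+ + u^-$ and, for fixed $t\geq 0$ and $v\in X^-$, expand
$$
J_\rho(tu+v) = \tfrac{t^2}{2}\|u^+\|^2 - \tfrac12\|tu^- + v\|^2 - \tfrac12\int_{\mathbb V}\tfrac{\rho}{|x|^2+1}|tu+v|^2\,d\mu - \int_{\mathbb V}F(x,tu+v)\,d\mu,
$$
noting that $(tu+v)^+ = tu^+$ and $(tu+v)^- = tu^- + v$ since $v\in X^-$. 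Similarly I would expand the correction term $\langle J'_\rho(u), \tfrac{t^2-1}{2}u + tv\rangle$ using the rewritten formula for $J'_\rho$: the $X^+$-part contributes $\tfrac{t^2-1}{2}\|u^+\|^2$, the $X^-$-part contributes $-(u^-, \tfrac{t^2-1}{2}u^- + tv) = -\tfrac{t^2-1}{2}\|u^-\|^2 - t(u^-,v)$, the Hardy term contributes $-\int \tfrac{\rho}{|x|^2+1}u(\tfrac{t^2-1}{2}u+tv)\,d\mu$, and the nonlinear term contributes $-\int f(x,u)(\tfrac{t^2-1}{2}u+tv)\,d\mu$.

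Next I would collect terms. The quadratic $X^+\oplus X^-$ part of $J_\rho(u) - J_\rho(tu+v) + \langle J'_\rho(u), \tfrac{t^2-1}{2}u+tv\rangle$ should simplify, after using $\|tu^-+v\|^2 = t^2\|u^-\|^2 + 2t(u^-,v) + \|v\|^2$, to a nonnegative expression — indeed the $X^-$ contributions combine to something like $\tfrac12\|tu^- + v\|^2 - \tfrac{t^2}{2}\|u^-\|^2 - t(u^-,v) = \tfrac12\|v\|^2 \geq 0$ (and the $X^+$ parts cancel exactly). The Hardy terms: one checks that $-\tfrac12|u|^2 + \tfrac12|tu+v|^2 - u(\tfrac{t^2-1}{2}u+tv)$ at each point equals $\tfrac12|v|^2 \geq 0$, so that term drops out with the right sign since $\rho\geq 0$ (here one uses $u^+u^- $ cross-terms carefully, or just treats $u$ and $v$ as real numbers pointwise and expands). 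What remains is the nonlinear part, where I must show pointwise
$$
-F(x,u) + F(x,tu+v) - f(x,u)\Big(\tfrac{t^2-1}{2}u + tv\Big) \leq 0,
$$
equivalently $F(x,tu+v) - F(x,u) \leq f(x,u)\big(\tfrac{t^2-1}{2}u+tv\big)$ for all real $u,v$ and $t\geq 0$.

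The main obstacle is this last pointwise inequality, which is the graph analogue of the classical convexity lemma of Szulkin--Weth. I expect it to follow from (F5): the monotonicity of $s\mapsto f(x,s)/|s|$ on each half-line implies that for any real numbers $a$ (here $a=u$) and $c$ (here $c=tu+v$), one has $F(x,c) - F(x,a) \leq f(x,a)\cdot\tfrac{c^2 - a^2}{2a} $ when $a\neq 0$ (and the degenerate case $a=0$ handled separately using $F\geq 0$ from (\ref{30})), and then $\tfrac{c^2-a^2}{2a} = \tfrac{(tu+v)^2 - u^2}{2u} = \tfrac{t^2-1}{2}u + tv + \tfrac{v^2}{2u}$; the extra term $f(x,a)\cdot\tfrac{v^2}{2u} = f(x,u)u\cdot\tfrac{v^2}{2u^2} \geq 0$ by (\ref{30}), so it can be discarded on the correct side. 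I would prove the auxiliary estimate $F(x,c)-F(x,a) \le \frac{f(x,a)}{2a}(c^2-a^2)$ by writing $F(x,c)-F(x,a) = \int_a^c f(x,s)\,ds$ and using that $f(x,s)\,\mathrm{sgn}(s)/|s|$ is nondecreasing to compare the integrand with $\frac{f(x,a)}{|a|}|s|$, splitting into the cases according to the signs of $a$ and $c$. Once this pointwise fact is in hand, summing over $x\in\mathbb V$ and combining with the nonnegativity of the $\tfrac12\|v\|^2$ terms and the Hardy term yields the claim.
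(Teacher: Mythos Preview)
Your approach is essentially the same as the paper's: expand $J_\rho(u)-J_\rho(tu+v)+\langle J'_\rho(u),\tfrac{t^2-1}{2}u+tv\rangle$, observe that the quadratic and Hardy pieces combine to $\tfrac12\|v\|^2+\tfrac{\rho}{2}\int_{\mathbb V}\tfrac{|v|^2}{|x|^2+1}\,d\mu\ge 0$, and reduce everything to a pointwise inequality on the nonlinear term that follows from (F5). The paper does exactly this computation.

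However, you have the direction of the pointwise inequality reversed. Since the quadratic and Hardy contributions are nonnegative, the nonlinear part must also be shown to be \emph{nonnegative}:
\[
-F(x,u)+F(x,tu+v)-f(x,u)\Big(\tfrac{t^2-1}{2}u+tv\Big)\ \ge\ 0,
\]
that is, $F(x,tu+v)-F(x,u)\ge f(x,u)\big(\tfrac{t^2-1}{2}u+tv\big)$, not $\le$. Correspondingly, the auxiliary estimate produced by (F5) is $F(x,c)-F(x,a)\ge\tfrac{f(x,a)}{2a}(c^2-a^2)$ (with the sign cases handled as you indicate), not $\le$: writing $F(x,c)-F(x,a)=\int_a^c f(x,s)\,ds$ and using $f(x,s)\ge \tfrac{f(x,a)}{|a|}|s|$ for $s\ge a$ gives the lower bound, precisely as in the paper. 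With the correct direction the extra term $f(x,u)\tfrac{v^2}{2u}=\tfrac{f(x,u)u}{2u^2}v^2\ge 0$ sits on the larger side and can indeed be dropped to reach the desired inequality. As you wrote it, the discarding step goes the wrong way, and in fact the inequality you stated is false already for $v=0$, $t>1$, $u>0$. Once this sign is corrected your argument coincides with the paper's.
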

\begin{proof} For $u\in X\backslash X^-,\,v\in X^-$ and $t\in[0,+\infty)$, we have that
$tu+v=tu^++(tu^-+v),$ where $tu^+\in X^+$ and $(tu^-+v)\in X^-$.
Direct calculation yields that
\begin{eqnarray*}
& &J_\rho(tu+v)-J_\rho (u)-\langle J'_\rho(u),(\frac{t^2-1}{2}u+tv)\rangle \\
&=&-\frac{1}{2}\|v\|^2-\frac{1}{2}
\int_{\mathbb{V}}\frac{\rho}{(|x|^2+1)}|v|^2\,d\mu+\int_{\mathbb{V}}\varphi(t,x)\,d\mu\\
\\&\leq&\int_{\mathbb{V}}\varphi(t,x)\,d\mu,
\end{eqnarray*}
where $\varphi(t,x):=(\frac{t^2-1}{2}u+tv)f(x,u)+F(x, u)-F(x, tu+v).$ We only need to prove, for any $x\in\mathbb{V}$, that
\begin{equation}\label{oa}
F(x, u)-F(x, tu+v)\leq -(\frac{t^2-1}{2}u+tv)f(x,u),\quad t\geq0, \quad v\in\R,
\end{equation}
since this implies that $\varphi(t,x)\leq 0$ for $t\geq0$ and $x\in \mathbb{V}$.

Now we prove (\ref{oa}). In fact, for any $x\in\mathbb{V}$ and $u\neq0$, the condition (F5) implies that
\begin{equation}\label{ob}
f(x,s)\geq\frac{f(x,u)}{|u|}|s|,\quad s\geq u.
\end{equation}
To show (\ref{oa}), without loss of generality, we assume that $u\leq tu+v$. Note that $$F(x, tu+v)-F(x, u)=\int_{u}^{tu+v}f(x,s)\,ds,$$ if $0< u\leq tu+v$ or $u\leq tu+v\leq0$, by (\ref{30}) and (\ref{ob}),
\begin{eqnarray*}
\int_{u}^{tu+v}f(x,s)\,ds\geq\frac{f(x,u)}{|u|}\int_{u}^{tu+v}|s|\,ds\geq(\frac{t^2-1}{2}u+tv)f(x,u);
\end{eqnarray*}
if $u<0\leq tu+v$, by (\ref{30}) and (\ref{ob}),
\begin{eqnarray*}
\int_{u}^{tu+v}f(x,s)\,ds\geq\int_{u}^{0}f(x,s)\,ds\geq\frac{f(x,u)}{|u|}\int_{u}^{0}|s|\,ds\geq(\frac{t^2-1}{2}u+tv)f(x,u).
\end{eqnarray*}
Hence (\ref{oa}) holds.   The proof is completed.
\end{proof}

\section{The behavior of Cerami sequences}
In this section, we study the behavior of Cerami sequences, which are useful in the proof of Theorem \ref{thm-1.1} and \ref{thm-1.2}.

\begin{lm}\label{lm-3.2}

Let $\{\rho_n\}\subset [0, +\infty)$, $\rho_n\leq \rho <\frac{\tilde{\rho}^+}{\kappa}$. If $\{u_n\}\subset X\setminus X^-$ satisfies 
$(1+\|u_n\|)J'_{\rho_n}(u_n)\rightarrow0$ and $J_{\rho_n}(u_n)$ is bounded from above, then $\{u_n\}$ is bounded.
In particular, any Cerami sequence of $J_{\rho}$ at level $c\geq 0$ is bounded for $0\leq \rho  <\frac{\tilde{\rho}^+}{\kappa}$.

\end{lm}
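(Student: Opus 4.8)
The plan is to argue by contradiction. Suppose $\|u_n\|\to+\infty$ and set $w_n=u_n/\|u_n\|$, so $\|w_n\|=1$ and, along a subsequence, $w_n\rightharpoonup w$ in $X$, $w_n(x)\to w(x)$ for every $x\in\mathbb{Z}^N$, and $w_n^{\pm}\rightharpoonup w^{\pm}$. From $(1+\|u_n\|)J'_{\rho_n}(u_n)\to0$ I will use two consequences throughout: $\|J'_{\rho_n}(u_n)\|\,\|u_n\|\to0$ (so remainders $\langle J'_{\rho_n}(u_n),\frac{t^2-1}{2}u_n+tv\rangle$ vanish whenever $t$ and $\|v\|$ stay bounded), and $\langle J'_{\rho_n}(u_n),u_n\rangle\to0$. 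Testing $J'_{\rho_n}(u_n)$ with $u_n$ and invoking (\ref{30}) gives $\int_{\mathbb{Z}^N}F(x,u_n)\,d\mu\le\frac12\int_{\mathbb{Z}^N}f(x,u_n)u_n\,d\mu\le\frac12\|u_n^+\|^2+o(1)\le\frac12\|u_n\|^2+o(1)$, hence $\int_{\mathbb{Z}^N}\frac{F(x,u_n)}{u_n^2}w_n^2\,d\mu\le\frac12+o(1)$. If $w\neq0$, say $w(x_0)\neq0$, then $|u_n(x_0)|=\|u_n\|\,|w_n(x_0)|\to+\infty$, so by (F4) the $x_0$-term $\frac{F(x_0,u_n(x_0))}{u_n(x_0)^2}w_n(x_0)^2\to+\infty$; since all terms are nonnegative, Fatou's lemma for series contradicts the bound $\le\frac12+o(1)$. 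Therefore $w=0$, and in particular $w_n\rightharpoonup0$, which by the vanishing-weight argument of Lemma \ref{lm-3.1} yields $\int_{\mathbb{Z}^N}\frac{\rho_n}{|x|^2+1}|w_n|^2\,d\mu\to0$.

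Next suppose $w=0$ but $\|w_n\|_{\infty}\not\to0$. Along a subsequence there are $x_n$ with $|w_n(x_n)|\ge\delta>0$, and since $w_n(x)\to0$ for each fixed $x$ we must have $|x_n|\to+\infty$. Writing $x_n$ modulo the period lattice of $V$ (finitely many residues, pass to a subsequence) and translating by a period vector $k_n$, which commutes with $A$ and hence with the spectral projections, the sequence $\widetilde w_n:=w_n(\cdot+k_n)$ satisfies $\|\widetilde w_n\|=1$, $\widetilde w_n^{\pm}=(w_n^{\pm})(\cdot+k_n)$ and $\widetilde w_n\rightharpoonup\widetilde w\neq0$, while $\widetilde u_n:=u_n(\cdot+k_n)=\|u_n\|\,\widetilde w_n$. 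Exploiting the invariance of the periodic part $\Phi(u)=\frac12\|u^+\|^2-\frac12\|u^-\|^2-\int F(x,u)\,d\mu$ of $J_{\rho_n}$ under lattice translations, testing $\Phi'(\widetilde u_n)$ against a fixed $\phi\in C_c(\mathbb{Z}^N)$, dividing by $\|u_n\|$, and using that the translated Hardy weight is $O((|k_n|-R)^{-2})$ on $\mathrm{supp}\,\phi\subset B_R$, one obtains $(\widetilde w^{+}-\widetilde w^{-},\phi)=\lim_{n\to\infty}\int_{\mathbb{Z}^N}\frac{f(x,\widetilde u_n)}{\|u_n\|}\phi\,d\mu$. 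But (F4) together with (F5) forces $f(x,u)/u\to+\infty$ as $|u|\to+\infty$ (otherwise $F(x,u)/u^2$ would stay bounded for that $x$), so taking $\phi=\mathbf 1_{\{x_0\}}$ with $\widetilde w(x_0)\neq0$ makes $\bigl|\int\frac{f(x,\widetilde u_n)}{\|u_n\|}\phi\,d\mu\bigr|=\frac{f(x_0,\widetilde u_n(x_0))}{\widetilde u_n(x_0)}\,|\widetilde w_n(x_0)|\to+\infty$, contradicting the finiteness of the left-hand side.

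It remains to treat $w=0$ with $\|w_n\|_{\infty}\to0$. By the discrete Lions lemma (Lemma \ref{88}), $w_n\to0$ in $\ell^q$ for every $q>2$, and by Lemma \ref{jj} also $w_n^{\pm}\to0$ in $\ell^q$. For fixed $s\ge0$, apply Lemma \ref{lm-2.8} with $u=u_n$, $t=s/\|u_n\|$ and $v=-sw_n^-\in X^-$; then $tu_n+v=sw_n^{+}$ and the remainder tends to $0$, so $J_{\rho_n}(sw_n^{+})\le J_{\rho_n}(u_n)+o(1)\le C+o(1)$. Since $sw_n^{+}\in X^{+}$, Lemma \ref{lm-3.4} and Lemma \ref{lm-2.4} give $\frac{s^2}{4}(\tilde\rho^{+}-\kappa\rho)\|w_n^{+}\|^2\le C+\varepsilon s^2\|w_n^{+}\|_2^2+o_n(1)$; dividing by $s^2$, letting $n\to\infty$, then $s\to+\infty$ and $\varepsilon\to0$, we get $\|w_n^{+}\|\to0$, hence $\|w_n^{-}\|\to1$. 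Finally, dividing $\langle J'_{\rho_n}(u_n),u_n\rangle=o(1)$ by $\|u_n\|^2$,
$$
\frac{1}{\|u_n\|^2}\int_{\mathbb{Z}^N}f(x,u_n)u_n\,d\mu=\|w_n^{+}\|^2-\|w_n^{-}\|^2-\int_{\mathbb{Z}^N}\frac{\rho_n}{|x|^2+1}|w_n|^2\,d\mu+o(1)\longrightarrow-1,
$$
which contradicts $f(x,u_n)u_n\ge0$ from (\ref{30}). All cases being ruled out, $\{u_n\}$ is bounded; the statement for Cerami sequences at level $c\ge0$ is the special case $\rho_n\equiv\rho$.

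The main obstacle is exactly the case where the weak limit of the rescaled sequence vanishes, since there the pointwise use of (F4) no longer applies: one must either recover concentration by translating with the periodicity of $V$ and $f$ — which is delicate because $J_{\rho_n}$ is \emph{not} translation invariant, so the argument only works after dividing by $\|u_n\|$, which kills the translated Hardy term — or, in the genuinely vanishing case, combine the monotonicity inequality of Lemma \ref{lm-2.8} (to annihilate the $X^{+}$-component) with the sign condition (\ref{30}) to force $\|u_n\|^{-2}\int f(x,u_n)u_n\,d\mu\to-1$, which is absurd.
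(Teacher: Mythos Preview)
Your argument is correct, and it reaches the same conclusion as the paper's, but the organization and one of the contradiction mechanisms differ.

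The paper proceeds in two steps. First it shows directly that $v_n^{+}=u_n^{+}/\|u_n\|$ cannot vanish in $\ell^q$ for $q>2$: from $\langle J'_{\rho_n}(u_n),u_n\rangle\to0$ and (\ref{30}) one gets $\liminf_n\|v_n^{+}\|^2\geq\tfrac12$, and then Lemma~\ref{lm-2.8} yields $M\geq\limsup_n J_{\rho_n}(sv_n^{+})\geq \tfrac{s^2}{4}(\tilde\rho^{+}-\kappa\rho)\cdot\tfrac12$ for every $s>0$, a contradiction. Second, by the Lions lemma one finds concentration points $y_n$ for $v_n^{+}$, translates, obtains a nonzero weak limit $w$, and then (F4) together with Fatou on $J_{\rho_n}(u_n)/\|u_n\|^2$ gives the final contradiction.

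Your proof covers the same ground in three cases. Your case~3 is exactly the paper's first step, except that instead of stopping at the contradiction $M\geq cs^2$ you push further to $\|w_n^{+}\|\to0$ and then extract a second contradiction via $\|u_n\|^{-2}\int f(x,u_n)u_n\to-1$; this works but is longer than needed. Your cases~1 and~2 together correspond to the paper's second step, with two differences worth noting: (i) you split off the situation where no translation is required, which is harmless; (ii) in case~2 you obtain the contradiction by testing $J'_{\rho_n}$ against a single-vertex function and invoking $f(x,u)/u\to+\infty$ (a consequence of (F4)+(F5)), whereas the paper works at the functional level with Fatou on $\int F(x,\tilde u_n)/\|u_n\|^2$. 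Your route makes the role of the $T$-periodicity (translation by lattice vectors commuting with the spectral projections, the shifted Hardy weight being $O((|k_n|-R)^{-2})$) more explicit than the paper does; on the other hand, the paper's single Fatou argument is shorter and avoids the separate case analysis. One small remark: the ``vanishing-weight argument'' you cite is not literally Lemma~\ref{lm-3.1} (which is about translates of a fixed function) but the elementary fact that $\int\frac{|w_n|^2}{|x|^2+1}\to0$ whenever $w_n\rightharpoonup0$ in $\ell^2$, since the weight tends to zero at infinity and $B_R$ is finite; your conclusion there is correct.
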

\begin{proof} Let $J_{\rho_n}(u_n)\leq M$. Suppose that $\| u_n \|\rightarrow+ \infty$ as $n\rightarrow+ \infty$.
Let $v_n:=\frac{u_n}{\|u_n\|}$, then up to a subsequence, we have that
\begin{eqnarray*}\label{3.5}
v_n\rightharpoonup v, \qquad \text{in}~X,\qquad \text{and~}\qquad
v_n\rightarrow v, \qquad \text{pointwise~in}~\mathbb{V}.
\end{eqnarray*}

We first claim that $\{v^{+}_{n}\}$ does not converge to 0 in $\ell^{q}(\mathbb{V})$ with $q>2$. In fact, by contradiction, we assume that $v_n^+\rightarrow0$ in $\ell^q(\mathbb{V})$.
Then it follows from Lemma \ref{lm-2.4} that, for any $s>0$, $$\int_{\mathbb{V}}F(x, sv_n^+)\,d\mu\rightarrow 0.$$ Moreover, by (\ref{30}) and the fact that $\langle J'_{\rho_n}(u_n),u_n\rangle\rightarrow 0$ as $n\rightarrow+\infty$, we have that
$$
\| u_n^+ \|^2-\| u_n^- \|^2\geq \langle J'_{\rho_n}(u_n),u_n\rangle,
$$
and hence
$$
2\| u_n^+ \|^2\geq \| u_n^+ \|^2+\| u_n^- \|^2+\langle J'_{\rho_n}(u_n),u_n\rangle=\| u_n \|^2+\langle J'_{\rho_n}(u_n),u_n\rangle.
$$
Since $v_n^+=\frac{u_n^+}{\|u_n\|}$,  passing to a subsequence if necessary, one gets that $\liminf\limits_{n\rightarrow\infty} \| v_n^+ \|^2=C>0$.
As a consequence, by Lemma \ref{lm-3.4} and Lemma \ref{lm-2.8},
\begin{eqnarray}\label{3.8}
M&\geq  &
 \limsup\limits_{n\rightarrow\infty} J_{\rho_n}(u_n)
 \geq  \limsup\limits_{n\rightarrow\infty} J_{\rho_n}(sv_n^+)\nonumber\\
& = & \frac{s^2}{2}\limsup\limits_{n\rightarrow\infty} \| v_n^+ \|^2_{\rho_n}\nonumber\\
& \geq & \frac{s^2}{4}  (\tilde{\rho}^+-\rho\kappa)\limsup\limits_{n\rightarrow\infty} \| v_n^+ \|^2\nonumber\\ &\geq&\frac{s^2}{4}C( \tilde{\rho}^+-\rho\kappa).
\end{eqnarray}
We obtain a contradiction since $s$ is arbitrary. We complete the claim.

Then by Lions Lemma \ref{88}, there exist a sequence $\{y_n\}\subset\mathbb{V}$ and a positive constant $c$ such that
$|v_n^{+}(y_n)|\geq c>0.$ Let $w_{n}(x)=v_n(x+y_n).$
Then for some $w\in X$,
\begin{eqnarray*}\label{3.10}
w_n\rightharpoonup w, \qquad \text{in}~X,\qquad\text{and~}\qquad
w_n\rightarrow w, \qquad \text{pointwise~in}~\mathbb{V},
\end{eqnarray*}
where $|w^{+}(0)|\geq c>0$. This implies that $w\neq0$.

Denote $\tilde{u}_{n}(x)=u_n(x+y_n),$ then $|\tilde{u}_{n}(x)|=|w_{n}(x)|\| u_n \|\rightarrow+\infty$ since $w(x)\neq 0$. It follows from (F4) that
$$
\frac{F(x, \tilde{u}_{n}(x))}{\| u_n \|^2}=\frac{F(x, \tilde{u}_{n}(x))}{|\tilde{u}_{n}(x)|^2}|w_{n}(x)|^2\rightarrow+\infty.
$$
Since $\langle J'_{\rho_n}(u_n),u_n\rangle\rightarrow 0$ as $n\rightarrow+\infty$, for $n$ large enough,
$$
\| u_n^+ \|^2-\| u_n^- \|^2-\int_{\mathbb{V}} \frac{\rho_n}{(|x|^2+1)}|u_n|^2\,d\mu\geq 0.
$$
This implies that $0\leq\frac{1}{\| u_n \|^2}\int_{\mathbb{V}} \frac{\rho_n}{(|x|^2+1)}|u_n|^2\,d\mu\leq1$ for $n$ large enough. Therefore by the periodicity of $F$ in $x\in \mathbb{V}$
and the Fatou lemma,
\begin{eqnarray*}
 0=\limsup\limits_{n\rightarrow\infty}\frac{J_{\rho_n}(u_n)}{\| u_n \|^2}&= &\limsup\limits_{n\rightarrow\infty}~[\frac{1}{2} (\| v_n^+ \|^2-\| v_n^- \|^2- \frac{1}{\| u_n \|^2}\int_{\mathbb{V}} \frac{\rho_n}{(|x|^2+1)}|u_n|^2\,d\mu )
\\
& & -\int_{\mathbb{V}}\frac{F(x,\tilde{u}_{n}(x) )}{\| u_n \|^2}\,d\mu]\\
&=&-\infty.
\end{eqnarray*}
We get a contradiction.
\end{proof}
\begin{lm}\label{lm-v}

Let $0\leq \rho <\frac{\tilde{\rho}^+}{\kappa}$. Assume that $\{u_n\}\subset X$ is a bounded Palais-Smale sequence of the functional $J_\rho$ at level $c_\rho\geq 0$, that is, $J'_{\rho}(u_n)\rightarrow0$ and $J_{\rho}(u_n)\rightarrow c_\rho$. Passing to a subsequence if necessary, there exists some $u\in X$ such that
\begin{itemize}
\item[(i)] $\underset{n\rightarrow+\infty}{\lim}J_\rho(u_n-u)=c_\rho-J_\rho(u)$;\\
\item[(ii)] $\underset{n\rightarrow+\infty}{\lim}J'_\rho(u_n-u)=0,\qquad \text{in~}X$.
\end{itemize}

\end{lm}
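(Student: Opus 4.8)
The plan is to prove this Brezis--Lieb type splitting for the strongly indefinite functional $J_\rho$ by combining weak convergence, pointwise convergence, and the discrete Brezis--Lieb lemma. First I would extract, from the boundedness of $\{u_n\}$, a subsequence with $u_n\rightharpoonup u$ in $X=\ell^2(\mathbb{V})$ and $u_n\to u$ pointwise on $\mathbb{V}$; note that weak convergence in $\ell^2$ together with the splitting lemmas (Lemma \ref{jj}) gives $u_n^\pm\rightharpoonup u^\pm$ and pointwise convergence of $u_n^\pm$ as well, and the quadratic form $\|u_n^+\|^2-\|u_n^-\|^2-\|u^+\|^2+\|u^-\|^2 - (\|u_n^+-u^+\|^2-\|u_n^--u^-\|^2)\to 0$ by the standard Hilbert-space identity $\|a\|^2 = \|a-b\|^2 + 2(a-b,b) + \|b\|^2$ applied on $X^+$ and $X^-$ separately, using $(u_n^\pm - u^\pm, u^\pm)\to 0$.

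Next I would treat the Hardy term and the nonlinear term. For the Hardy term, since $u_n\to u$ pointwise and $\{u_n\}$ is $\ell^2$-bounded, the discrete Brezis--Lieb lemma (Lemma \ref{i} and Remark \ref{lm4}) applied with the finite measure $d\nu := \frac{\rho}{|x|^2+1}\,d\mu$ on $\mathbb{V}$ gives
\begin{equation*}
\int_{\mathbb{V}}\frac{\rho}{(|x|^2+1)}|u_n|^2\,d\mu - \int_{\mathbb{V}}\frac{\rho}{(|x|^2+1)}|u_n-u|^2\,d\mu \to \int_{\mathbb{V}}\frac{\rho}{(|x|^2+1)}|u|^2\,d\mu.
\end{equation*}
For $\int_{\mathbb{V}}F(x,u_n)\,d\mu$, the growth bounds (\ref{nn}) force $\{F(\cdot,u_n)\}$ and $\{f(\cdot,u_n)u_n\}$ to be bounded in $\ell^1$ (combining the $\ell^2$ and $\ell^p$ bounds coming from the embedding $\ell^2\hookrightarrow\ell^p$ in the discrete setting), and a Brezis--Lieb argument for the nonlinearity — the version where one writes $F(x,u_n) - F(x,u_n-u) - F(x,u)$ and dominates the difference using the mean value theorem and (\ref{nn}) — yields $\int_{\mathbb{V}} F(x,u_n)\,d\mu - \int_{\mathbb{V}} F(x,u_n-u)\,d\mu \to \int_{\mathbb{V}} F(x,u)\,d\mu$. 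Adding these three splittings and the quadratic-form splitting gives (i).

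For (ii) I would show $J'_\rho(u_n-u)\to 0$ in $X^*=X$ by testing against $\phi\in X$ with $\|\phi\|\le 1$: the quadratic part contributes $(u_n^+-u^+,\phi) - (u_n^--u^-,\phi)$, which goes to $0$ using that $J'_\rho(u_n)\to 0$ together with $(u^+-u_n^+,\phi)-(u^--u_n^-,\phi)$... more carefully, one uses $J'_\rho(u_n)\to 0$ and $J'_\rho(u)=0$ (which follows because $J'_\rho$ is $\mathcal{T}$-to-weak$^*$ continuous by Lemma \ref{lmaa}, so $J'_\rho(u_n)\rightharpoonup J'_\rho(u)$, forcing $J'_\rho(u)=0$), and then the task reduces to showing $\int_{\mathbb{V}}\big(f(x,u_n)-f(x,u_n-u)-f(x,u)\big)\phi\,d\mu\to 0$ and the analogous statement for the Hardy term, both uniformly in $\|\phi\|\le 1$. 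The Hardy part follows from Lemma \ref{jj}-type reasoning plus the $\ell^2$-Brezis--Lieb splitting for the measure $d\nu$; the nonlinear part is the standard lemma (as in \cite{GM,M}) that $f(x,u_n)-f(x,u_n-u)-f(x,u)\to 0$ in $\ell^{p'}(\mathbb{V})$ given the growth (\ref{nn}), the $\ell^2\cap\ell^p$ boundedness, and pointwise convergence. The main obstacle I anticipate is making the nonlinear Brezis--Lieb splitting (for both $F$ and $f$) rigorous with the general exponent $p$ allowed in (F2): one must combine the $\ell^2$ bound and the $\ell^p$ bound, invoke a vector-valued Brezis--Lieb / Vitali-type argument on the sequence space, and carefully exploit the discrete embedding $\ell^s\hookrightarrow\ell^t$ for $s<t$ (Remark \ref{1}) to control tails; this is where the discrete setting actually simplifies matters compared to $\mathbb{R}^N$, but the bookkeeping with two simultaneous integrability scales is the delicate point.
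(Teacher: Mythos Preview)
For part (i) your approach coincides with the paper's: both apply the discrete Br\'ezis--Lieb lemma to $\|u_n^\pm\|^2$ and to the Hardy-weighted $\ell^2$ norm, and both treat $\int_{\mathbb V} F(x,u_n)\,d\mu$ through the representation $F(x,u_n)-F(x,u_n-u)=\int_0^1 f(x,u_n-\theta u)\,u\,d\theta$ combined with the uniform integrability coming from (\ref{nn}); the paper invokes Vitali convergence explicitly, while your ``mean value plus domination'' is the same mechanism.

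For part (ii) the two routes differ. The paper tests $\langle J_\rho'(u_n-u),\phi\rangle$ directly against $\phi\in C_c(\mathbb V)$ and uses that on the finite support every term converges pointwise; this is short, but as written the bound $C\xi_n\|\phi\|$ carries constants depending on the support radius of $\phi$, so strictly it yields only weak$^*$ (not norm) convergence of $J_\rho'(u_n-u)$. Your approach --- first establish $J_\rho'(u)=0$, then write $J_\rho'(u_n-u)=J_\rho'(u_n)-J_\rho'(u)+(\text{nonlinear defect})$ and reduce everything to the Br\'ezis--Lieb statement $f(\cdot,u_n)-f(\cdot,u_n-u)-f(\cdot,u)\to 0$ in the dual norm --- is the standard $\mathbb R^N$ argument (cf.\ \cite{GM,M}) and delivers the genuinely uniform-in-$\phi$ estimate. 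Two minor corrections to your sketch: (a) you cannot literally invoke the $\mathcal T$-to-weak$^*$ continuity of Lemma~\ref{lmaa} to get $J_\rho'(u)=0$, because $u_n\rightharpoonup u$ does not imply $u_n\stackrel{\mathcal T}{\to}u$ (the latter needs $u_n^+\to u^+$ strongly); however, the \emph{proof} of Lemma~\ref{lmaa}(ii) uses only boundedness plus pointwise convergence, both of which you have, so the conclusion stands. (b) The Hardy term in $J_\rho'$ is linear in $u$, so its Br\'ezis--Lieb defect in (ii) is identically zero --- no extra argument is needed there.
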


\begin{proof}
Since $\{u_n\}$ is bounded in $X$, we assume that for some $u\in X$,
\begin{eqnarray*}
u_n\rightharpoonup u, \qquad \text{in}~X,\qquad\text{and}\qquad
u_n\rightarrow u, \qquad \text{pointwise~in}~\mathbb{V}.
\end{eqnarray*}

(i) By the Br\'{e}zis-Lieb lemma \ref{i}, we obtain that
\begin{equation}\label{3.20}
\|u^{+}_n\|^{2}-\|u^{+}_n-u^+\|^{2}=\|\bar{u}^{+}\|^{2}+o(1),\qquad \|u^{-}_n\|^{2}-\|u^{-}_n-u^-\|^{2}=\|u^{{-}}\|^{2}+o(1).
\end{equation}
\begin{equation}\label{3.24}
\int_{\mathbb{V}}\frac{|u_{n}|^{2}}{(|x|^{2}+1)}\,d\mu-\int_{\mathbb{V}}\frac{|u_{n}-u|^{2}}{(|x|^{2}+1)}\,d\mu
=\int_{\mathbb{V}}\frac{|u|^{2}}{(|x|^{2}+1)}\,d\mu+o(1).
\end{equation}
We claim that
\begin{equation}\label{3.26}
\int_{\mathbb{V}}F(x,u_n)\,d\mu=\int_{\mathbb{V}}F(x,u_{n}-u)\,d\mu+\int_{\mathbb{V}}F(x,u)\,d\mu+o(1).
\end{equation}
In fact, direct calculation yields that
\begin{eqnarray*}
\int_{\mathbb{V}}F(x,u_n)\,d\mu-\int_{\mathbb{V}}F(x,u_n-u)\,d\mu&=&-\int_{\mathbb{V}}\int^{1}_{0}\frac{d}{d\theta}F(x,u_n-\theta u)\,d\theta \,d\mu\\&=&
\int^{1}_{0}\int_{\mathbb{V}}f(x,u_n-\theta u)u\, d\mu \,d\theta.
\end{eqnarray*}
Since $\{u_n-\theta u\}$ is bounded in $X$, by (\ref{nn}), we obtain that the sequence $\{f(x,u_n-\theta u)u\}$ is uniformly summable and tight over $\mathbb{V}$, that is, for any $\varepsilon>0$, there is a
$\delta>0$ such that, for any $\Omega\subset\mathbb{V}$ with the measure $\mu(\Omega)<\delta$,
$$
\int_{\Omega}|f(x,u_n-\theta u)u|\,d\mu<\varepsilon
$$
with any $n\in \N$; and
there exists $\Omega_0$ with $\mu(\Omega_0)<+\infty$ such that,
for any $n\in \N$,
$$
\int_{\mathbb{V}\backslash \Omega_0}|f(x,u_n-\theta u)u|\,d\mu<\varepsilon.
$$
 Note that
\begin{eqnarray*}
f(x,u_n-\theta u)u\rightarrow f(x,u-\theta u)u, \qquad \text{pointwise~in}~\mathbb{V},
\end{eqnarray*}
by the Vitali convergence theorem, we get that
$f(x,u-\theta u)u$ is summable and
$$
\int_{\mathbb{V}}f(x,u_n-\theta u)u\,d\mu\rightarrow \int_{\mathbb{V}}f(x,u-\theta u)u\,d\mu, \quad n\rightarrow+\infty.
$$
Then as $n\rightarrow+\infty$, we get that
$$
\int_{\mathbb{V}}F(x,u_n)\,d\mu-\int_{\mathbb{V}}F(x,u_n-u)\,d\mu\rightarrow\int_0^1 \int_{\mathbb{V}} f(x,u-\theta u)u\,d\mu \,d\theta=\int_{\mathbb{V}}F(x,u)\,d\mu.
$$
Hence (\ref{3.26}) holds. Therefore, by (\ref{3.20}), (\ref{3.24}) and (\ref{3.26}), one has that
\begin{eqnarray*}
J_{\rho}(u_n)=J_{\rho}(u_n-u)+J_{\rho}(u)+o(1).
\end{eqnarray*}
Note that $\underset{n\rightarrow+\infty}{\lim}J_\rho(u_n)=c_\rho$, hence 
\begin{eqnarray*}
J_\rho(u_n-u)=c_\rho-J_\rho(u)+o(1).
\end{eqnarray*}

(ii) For any $\phi\in C_c(\mathbb{V})$, assume that $\text{supp}(\phi)\subset B_{r}$, where $r$ is a positive constant. Since $B_{r+1}$ is a finite set in $\mathbb{V}$ and $u_n\rightarrow u $ pointwise in $\mathbb{V}$ as $n\rightarrow+\infty$, we get that
\begin{eqnarray*}
|\langle J_\rho'(u_n-u),\phi\rangle|&\leq&
\sum\limits_{x\in B_{r+1}}|\nabla(u_n-u)||\nabla\phi|+\sum\limits_{x\in B_{r}}|V(x)||u_{n}-u|\phi|\\&&+\sum\limits_{x\in B_{r}}\frac{\rho}{(|x|^2+1)}|u_{n}-u||\phi|\\&&+\sum\limits_{x\in B_{r}}|f(x,u_n-u)||\phi|\\&\leq& C\xi_{n}\|\phi\|,
\end{eqnarray*}
where $C$ is a constant not depending on $n$ and $\xi_{n}\rightarrow 0$ as $n\rightarrow+\infty$.
Hence
$$\underset{n\rightarrow+\infty}{\lim}\|J_\rho'(u_n-u)\|_{X}=\underset{n\rightarrow+\infty}{\lim}
\underset{\|\phi\|=1}{\sup}|\langle J_\rho'(u_n-u),\phi\rangle|=0.$$
\end{proof}

We give a decomposition of bounded Palais-Smale sequence of $J_{\rho}$ in discrete version.
\begin{lm}\label{lm-4.3}

Let $0\leq \rho <\frac{\tilde{\rho}^+}{\kappa}$. Assume that $\{u_n\}\subset X$ is a bounded Palais-Smale sequence of $J_{\rho}$ at level $c_\rho\geq 0$. Then there exist sequences $\{\bar{u}_i\}_{i=0}^{k}\subset X$ and $\{x_n^i\}_{0\leq i\leq k}\subset\mathbb{V}$ with $x_n^0=0,$
$|x_n^i|\rightarrow+\infty$, $|x_n^i-x_n^j|\rightarrow+\infty$, $i\neq j$, $i,j=1,2,\cdot\cdot\cdot,k$, such that, up to a subsequence,
\begin{enumerate}
\item[(i)] $J'_{\rho}(\bar{u}_0)=0;$\\

\item[(ii)] $J'_{0}(\bar{u}_i)=0 \text{ with } \bar{u}_i\neq 0 \text{ for } i=1, 2, \cdot\cdot\cdot, k;$\\

\item[(iii)] $u_n-\underset {{i=0}}{\overset{k}{\Sigma}}\bar{u}_i(x-x_n^i) \rightarrow 0 , \qquad\| u_n \|^2\rightarrow \underset {{i=0}}{\overset{k}{\Sigma}}\| \bar{u}_i \|^2,\qquad n\rightarrow\infty;$\\

\item[(iv)] $c_\rho=J_{\rho}(\bar{u}_0)+\underset {{i=1}}{\overset{k}{\Sigma}} J_{0}(\bar{u}_i).$
\end{enumerate}

\end{lm}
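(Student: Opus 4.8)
The plan is to prove this by the now-standard iterative profile-decomposition (``bubbling'') argument of Benci--Cerami / Struwe type, adapted to the periodic strongly indefinite setting as in \cite{GM,M}, with the Hardy weight as the only genuinely new ingredient. Put $v_n^0:=u_n$, $x_n^0:=0$, and let $\bar u_0$ be the weak limit of $u_n$ in $X$ along a subsequence (which then also converges pointwise on $\mathbb{V}$). Testing $J'_\rho(u_n)\to 0$ against functions in $C_c(\mathbb{V})$ and using pointwise convergence on the finite balls $B_r$ gives $J'_\rho(\bar u_0)=0$, i.e.\ (i); by Lemma \ref{lm-v} the remainder $v_n^1:=u_n-\bar u_0$ is again a bounded Palais-Smale sequence of $J_\rho$, with $J_\rho(v_n^1)\to c_\rho-J_\rho(\bar u_0)$ and $v_n^1\rightharpoonup 0$. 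One then iterates: given a bounded Palais-Smale sequence $v_n^i$ of $J_\rho$ with $v_n^i\rightharpoonup 0$, either $v_n^i\to 0$ in $X$, in which case the process stops with $k=i-1$, or a new bubble is extracted as follows.

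Suppose $v_n^i\not\to 0$ in $X$. First, $\limsup_n\|v_n^i\|_q>0$ for some $q>2$: otherwise, by \eqref{nn}, the $\ell^p$-boundedness of the spectral projectors (Lemma \ref{jj}), and the boundedness of $\{v_n^i\}$, one gets $\int_{\mathbb{V}}f(x,v_n^i)\big((v_n^i)^+-(v_n^i)^-\big)\,d\mu\to 0$; moreover $\int_{\mathbb{V}}\frac{\rho}{|x|^2+1}|v_n^i|^2\,d\mu\to 0$ (split into $B_R$, where $v_n^i\to 0$ on a finite set, and its complement, where the weight is $\le\rho R^{-2}$), whence the corresponding cross term tends to $0$ by the Hardy inequality \eqref{ff}; inserting $\phi=(v_n^i)^+-(v_n^i)^-$ into $\langle J'_\rho(v_n^i),\phi\rangle\to 0$ then forces $\|v_n^i\|^2\to 0$, a contradiction. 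By the discrete Lions lemma \ref{88} there are $x_n^i\in\mathbb{V}$ and $c>0$ with $|v_n^i(x_n^i)|\ge c$, and since $v_n^i\rightharpoonup 0$ forces pointwise convergence to $0$, necessarily $|x_n^i|\to+\infty$. After replacing $x_n^i$ by a point at bounded distance lying in a common period lattice of $V$ and $f$ (which changes $\bar u_i$ only by a fixed translate), translation by $x_n^i$ commutes with $-\Delta$, $V$, $f$, and with the spectral projectors onto $X^\pm$ (because $A=-\Delta+V$ does); passing to a subsequence, $v_n^i(\cdot+x_n^i)\rightharpoonup\bar u_i\neq 0$ in $X$. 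Testing against $\phi\in C_c(\mathbb{V})$: the Laplacian and nonlinear terms pass to the limit by periodicity and pointwise convergence, while the Hardy term vanishes by Lemma \ref{lm-3.1} and \eqref{ff}; hence $J'_0(\bar u_i)=0$, giving (ii). This is exactly why the bubbles solve $J_0$ and not $J_\rho$: the Hardy weight is not periodic and ``escapes to infinity'' under these translations.

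Next I would record the splitting identities for $v_n^{i+1}:=v_n^i-\bar u_i(\cdot-x_n^i)$. Weak convergence of $v_n^i(\cdot+x_n^i)$ together with the $X^\pm$-invariance of period-lattice translations gives $\|v_n^i\|^2=\|v_n^{i+1}\|^2+\|\bar u_i\|^2+o(1)$; Lemma \ref{lm-3.1} (and \eqref{ff} to bound the cross term) gives $\int_{\mathbb{V}}\frac{\rho}{|x|^2+1}|v_n^i|^2\,d\mu=\int_{\mathbb{V}}\frac{\rho}{|x|^2+1}|v_n^{i+1}|^2\,d\mu+o(1)$; and the Vitali/Br\'{e}zis-Lieb argument from the proof of Lemma \ref{lm-v}(i), carried out in the translated frame, gives $\int_{\mathbb{V}}F(x,v_n^i)\,d\mu=\int_{\mathbb{V}}F(x,v_n^{i+1})\,d\mu+\int_{\mathbb{V}}F(x,\bar u_i)\,d\mu+o(1)$. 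These combine to $J_\rho(v_n^i)=J_\rho(v_n^{i+1})+J_0(\bar u_i)+o(1)$, and the same translation argument shows $v_n^{i+1}$ is again a bounded Palais-Smale sequence of $J_\rho$ with $v_n^{i+1}\rightharpoonup 0$, so the iteration continues. It terminates because any nonzero critical point $w$ of $J_0$ satisfies $\|w\|^2=\int_{\mathbb{V}}f(x,w)(w^+-w^-)\,d\mu$, which by \eqref{nn}, Lemma \ref{jj} and the equivalence of $\|\cdot\|$ and $\|\cdot\|_2$ forces $\|w\|\ge\delta_0$ for a constant $\delta_0>0$ independent of $w$; hence, summing the norm identities, $k\,\delta_0^2\le\sum_{i=1}^k\|\bar u_i\|^2\le\lim_n\|u_n\|^2<+\infty$, so only finitely many bubbles arise. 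At the last step $v_n^{k+1}=u_n-\sum_{i=0}^k\bar u_i(\cdot-x_n^i)\to 0$ in $X$, which is the first part of (iii); telescoping the norm identities yields $\|u_n\|^2\to\sum_{i=0}^k\|\bar u_i\|^2$, and telescoping the energy identities together with $J_\rho(v_n^{k+1})\to J_\rho(0)=0$ yields (iv). Finally, $|x_n^i-x_n^j|\to+\infty$ for $i\neq j$: along a subsequence on which $x_n^i-x_n^j$ stayed bounded and hence were eventually constant, the weak limit defining $\bar u_i$ would be cancelled by the previously extracted bubble $\bar u_j$, contradicting $\bar u_i\neq 0$.

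The main obstacle is the bookkeeping forced by the non-local, non-periodic Hardy term: in each translated frame one must check that it contributes nothing to the limiting equation for $\bar u_i$ nor to the Br\'{e}zis-Lieb splittings of the energy and the norm, which is precisely what Lemma \ref{lm-3.1} is designed to provide. Two smaller technical points are upgrading $\langle J'_\rho(v_n^{i+1}),\phi\rangle\to 0$ for $\phi\in C_c(\mathbb{V})$ to $\|J'_\rho(v_n^{i+1})\|_X\to 0$ via the uniform-in-$\phi$ estimate from the proof of Lemma \ref{lm-v}(ii), and passing to a common subsequence along which all finitely many differences $x_n^i-x_n^j$ either converge in $\mathbb{V}$ or diverge to infinity.
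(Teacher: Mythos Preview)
Your proposal is correct and follows essentially the same iterative profile-decomposition scheme as the paper: extract the weak limit $\bar u_0$ as a critical point of $J_\rho$, peel it off via Lemma~\ref{lm-v}, then alternate between the vanishing dichotomy (which forces strong convergence) and the non-vanishing case (which produces a translated bubble solving $J_0$ because the Hardy weight disappears under Lemma~\ref{lm-3.1}), terminating via the uniform lower bound on $\|\bar u_i\|$. The only cosmetic differences are that in the vanishing case you test $J'_\rho(v_n^i)$ against $(v_n^i)^+-(v_n^i)^-$ and kill the Hardy term by the $B_R$/$B_R^c$ splitting, whereas the paper tests $J'_\rho(u_n)$ against $v_n^\pm$ separately and absorbs the Hardy term into $\|\cdot\|_\rho$ via Lemma~\ref{lm-3.4}; and you make explicit the period-lattice adjustment of $x_n^i$, which the paper leaves implicit.
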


\begin{proof}
We assume that for some $\bar{u}_0\in X$,
\begin{eqnarray*}
u_n\rightharpoonup \bar{u}_0, \qquad \text{in}~X,\qquad\text{and}\qquad
u_n\rightarrow \bar{u}_0, \qquad \text{pointwise~in}~\mathbb{V}.
\end{eqnarray*}
Similar to the proof of (ii) in Lemma \ref{lmaa}, we obtain that $J'_{\rho}(\bar{u}_0)=0$ since $J'_{\rho}(u_n)\rightarrow0$ as $n\rightarrow+\infty$.

Let $v_n(x)=u_n(x)-\bar{u}_0(x).$
Then, we have that
\begin{eqnarray*}\label{3.18}
v_n\rightharpoonup 0,\qquad \text{in}~X,\qquad\text{and}\qquad
v_n\rightarrow 0,\qquad \text{poinwise~in} \mathbb{V}.
\end{eqnarray*}
By Lemma \ref{lm-v}, one has that
\begin{equation}\label{3.30}
\begin{array}{ll}
J_{\rho}(v_n)=c_\rho-J_{\rho}(\bar{u}_0)+o(1),\\
J'_{\rho}(v_n)=o(1),\quad\text{in~}X.
\end{array}
\end{equation}

For $\{v_{n}\}$, we discuss two cases:

{\bf Case 1.} $\underset{n\rightarrow+\infty}{\limsup}~\|v_{n}\|_{\infty}=0$. By the boundedness of $\{v_{n}\}$ in $X$ and Lemma \ref{88}, we have that $\|v_n\|_{t}\rightarrow 0$ as $n\rightarrow+\infty$ for $t>2$.
By Lemma \ref{jj}, for $t>2$, we have that
\begin{equation}\label{3.34}
v^{{+}}_n\rightarrow 0,\qquad v^{-}_n\rightarrow 0,\qquad \text{in}~\ell^{t}(\mathbb{V}).
\end{equation}
Since $J'_{\rho}(u_n)=o(1), J'_{\rho}(\bar{u}_0)=0$, $u_n=v_n+\bar{u}_0$ and $u_n^+=v_n^++\bar{u}_0^+$, we obtain that
\begin{eqnarray*}
o(1)&=&\langle J'_{\rho}(u_n),v^{{+}}_n\rangle\\&=&( u_n^+,v_n^+)-\rho\int_{\mathbb{V}}\frac{u_n v^{{+}}_n}{(|x|^{2}+1)}\,d\mu
-\int_{\mathbb{V}}f(x,u_n)v^{{+}}_n \,d\mu\\&=&
\|v^{+}_n\|^{2}-\rho\int_{\mathbb{V}}\frac{v_n v^{{+}}_n}{(|x|^{2}+1)}\,d\mu+\langle J'_{\rho}(\bar{u}_0),v^{{+}}_n\rangle+\int_{\mathbb{V}}f(x,\bar{u}_0)v^{{+}}_n \,d\mu-\int_{\mathbb{V}}f(x,u_n)v^{{+}}_n \,d\mu\\&=&
\|v^{+}_n\|^{2}-\rho\int_{\mathbb{V}}\frac{|v^{{+}}_n|^{2}}{(|x|^{2}+1)}\,d\mu-\rho\int_{\mathbb{V}}\frac{v^{-}_n v^{{+}}_n}{(|x|^{2}+1)}\,d\mu+\int_{\mathbb{V}}f(x,u_0)v^{{+}}_n \,d\mu-\int_{\mathbb{V}}f(x,u_n)v^{{+}}_n \,d\mu\\&\geq&\frac{1}{2}(\tilde{\rho}^+-\rho\kappa)\|v^{+}_n\|^{2}-\rho\int_{\mathbb{V}}\frac{v^{-}_n v^{{+}}_n}{(|x|^{2}+1)}\,d\mu+\int_{\mathbb{V}}f(x,\bar{u}_0)v^{{+}}_n \,d\mu-\int_{\mathbb{V}}f(x,u_n)v^{{+}}_n \,d\mu,
\end{eqnarray*}
which means that
\begin{equation}\label{3.38}
\frac{1}{2}(\tilde{\rho}^+-\rho\kappa)\|v^{+}_n\|^{2}\leq\rho\int_{\mathbb{V}}\frac{v^{-}_n v^{{+}}_n}{(|x|^{2}+1)}\,d\mu+\int_{\mathbb{V}}f(x,u_n)v^{{+}}_n \,d\mu-\int_{\mathbb{V}}f(x,\bar{u}_0)v^{{+}}_n \,d\mu+o(1).
\end{equation}
Similarly, we have that
\begin{eqnarray*}
o(1)&=&\langle J'_{\rho}(u_n),v^{-}_n\rangle\\&=&
-\|v^{-}_n\|^{2}-\rho\int_{\mathbb{V}}\frac{|v^{-}_n|^{2}}{(|x|^{2}+1)}\,d\mu-\rho\int_{\mathbb{V}}\frac{v^{-}_n v^{{+}}_n}{(|x|^{2}+1)}\,d\mu+\int_{\mathbb{V}}f(x,\bar{u}_0)v^{-}_n \,d\mu-\int_{\mathbb{V}}f(x,u_n)v^{-}_n \,d\mu\\&\leq&-\|v^{-}_n\|^{2}-\rho\int_{\mathbb{V}}\frac{v^{-}_n v^{{+}}_n}{(|x|^{2}+1)}\,d\mu+\int_{\mathbb{V}}f(x,\bar{u}_0)v^{-}_n \,d\mu-\int_{\mathbb{V}}f(x,u_n)v^{-}_n \,d\mu,
\end{eqnarray*}
which yields that
\begin{equation}\label{3.40}
\|v^{-}_n\|^{2}\leq-\rho\int_{\mathbb{V}}\frac{v^{-}_n v^{{+}}_n}{(|x|^{2}+1)}\,d\mu+\int_{\mathbb{V}}f(x,\bar{u}_0)v^{-}_n \,d\mu-\int_{\mathbb{V}}f(x,u_n)v^{-}_n \,d\mu+o(1).
\end{equation}
Then it follows from (\ref{3.34}), (\ref{3.38}) and (\ref{3.40}) that
\begin{eqnarray*}
\frac{1}{2}(\tilde{\rho}^+-\rho\kappa)\|v_n\|^{2}&\leq&\int_{\mathbb{V}}f(x,\bar{u}_0)v^{-}_n \,d\mu-\int_{\mathbb{V}}f(x,u_n)v^{-}_n \,d\mu
+\int_{\mathbb{V}}f(x,u_n)v^{{+}}_n \,d\mu\\&&-\int_{\mathbb{V}}f(x,\bar{u}_0)v^{{+}}_n \,d\mu+o(1)\\&\rightarrow&0,
\end{eqnarray*}
that is $v_n\rightarrow 0$ in $X$. Then the proof ends with $k=0$.

{\bf Case 2.} $\underset{n\rightarrow+\infty}{\liminf}~\|v_{n}\|_{\infty}=\delta>0$. Then there exists a sequence $\{x^1_{n}\}\subset\mathbb{V}$ such that
$|v_{n}(x^1_{n})|\geq\frac{\delta}{2}>0.$ Denote $u_{n,1}(x)=v_{n}(x+x^1_{n})$
and assume that, for some $\bar{u}_{1}\in X$,
\begin{eqnarray*}\label{3.44}
u_{n,1}\rightharpoonup \bar{u}_{1}, \qquad \text{in}~X,\qquad\text{and}\qquad
u_{n,1}\rightarrow \bar{u}_{1}, \qquad \text{pointwise~in}~\mathbb{V},
\end{eqnarray*}
where $|\bar{u}_{1}(0)|\geq\frac{\delta}{2}>0$. Since $v_n\rightarrow 0$ pointwise in $\mathbb{V}$, one gets easily that $d(x^1_{n},0)\rightarrow+\infty$ as $n\rightarrow+\infty$.

For any $\phi\in X$, by the H\"{o}lder inequality, Lemma \ref{lm-3.1} and (\ref{ff}), we get that
\begin{eqnarray*}
\int_{\mathbb{V}}\frac{1}{(|x|^{2}+1)}v_n(x)\phi(x-x^1_{n})\,d\mu\rightarrow0,\qquad\text{as}~n\rightarrow+\infty.
\end{eqnarray*}
Therefore, by the periodicity of $f$ in $x\in \mathbb{V}$, we obtain that
\begin{eqnarray}\label{bb}
o(1)&=&\langle J'_{\rho}(v_n),\phi(x-x^1_{n})\rangle\nonumber\\&=&( v_n^+,\phi(x-x^1_{n}))-( v_n^-,\phi(x-x^1_{n}))-\int_{\mathbb{V}}\frac{\rho}{(|x|^{2}+1)}v_n \phi(x-x^1_{n})\,d\mu
\nonumber\\&&-\int_{\mathbb{V}}f(x,v_n)\phi(x-x^1_{n}) \,d\mu\nonumber\\&=&( u_{n,1}^+,\phi)-(u_{n,1}^-,\phi)-\int_{\mathbb{V}}f(x,u_{n,1})\phi \,d\mu+o(1)\nonumber\\&=&\langle J'_{0}(u_{n,1}),\phi\rangle+o(1).
\end{eqnarray}
This means that $\langle J'_{0}(\bar{u}_{1}),\phi\rangle=0$ and $\bar{u}_1$ is a nontrivial critical point of $J_0$.

Let
\begin{equation}\label{3.46}
z_n(x)=u_n(x)-\bar{u}_0(x)-\bar{u}_1(x-x^1_{n}).
\end{equation}
Then we have that
\begin{eqnarray*}\label{3.48}
z_n\rightharpoonup 0, \qquad \text{in}~X,\qquad\text{and}\qquad
z_n\rightarrow 0, \qquad \text{pointwise~in}~\mathbb{V}.
\end{eqnarray*}
 Observe that $v_n(x)=\bar{u}_1(x-x^1_{n})+z_n(x)$, by (\ref{3.20}) and the Br\'{e}zis-Lieb lemma,
\begin{eqnarray*}\label{3.49}
\begin{array}{ll}
\|u^{{+}}_n\|^{2}=\|\bar{u}^{{+}}_0\|^{2}+\|\bar{u}^{+}_1\|^{2}+\|z^{+}_n\|^{2}+o(1),\qquad \|u^{{-}}_n\|^{2}=\|\bar{u}^{{-}}_0\|^{2}+\|\bar{u}^{-}_1\|^{2}+\|z^{{-}}_n\|^{2}+o(1).
\end{array}
\end{eqnarray*}
Then one has that
\begin{eqnarray*}
\|u_n\|^{2}=\|\bar{u}_0\|^{2}+\|\bar{u}_1\|^{2}+\|z_n\|^{2}+o(1).
\end{eqnarray*}
By (\ref{3.30}), one sees that $\{v_n\}$ is a Palais-Smale sequence of $J_{\rho}$ at level $c_\rho-J_{\rho}(\bar{u}_0)$. Then it follows from Lemma \ref{lm-v} that
\begin{eqnarray*}
\begin{array}{ll}
\underset{n\rightarrow+\infty}{\lim}J_\rho(z_n)=\underset{n\rightarrow+\infty}{\lim}(c_\rho-J_\rho(\bar{u}_0)-J_\rho(\bar{u}_1(x-x^1_{n})),\\
\underset{n\rightarrow+\infty}{\lim}J'_\rho(z_n)=0,\qquad \text{in~}X.
\end{array}
\end{eqnarray*}
Note that $\underset{n\rightarrow+\infty}{\lim}|x^1_{n}|=+\infty$, by the invariance of $\|\cdot\|$ with respect to translations, Lemma \ref{lm-3.1} and the periodicity of $F$ in $x\in \mathbb{V}$, we have that
\begin{eqnarray*}
\underset{n\rightarrow+\infty}{\lim}J_\rho(\bar{u}_1(x-x^1_{n}))&=&\underset{n\rightarrow+\infty}{\lim}[\frac{1}{2}(\|\bar{u}^+_1(x-x^1_{n})\|^2
-\|\bar{u}^-_1(x-x^1_{n})\|^2\\&&-\int_{\mathbb{V}}\frac{\rho}{(|x|^{2}+1)}|\bar{u}_1(x-x^1_{n})|^2\,d\mu)
-\int_{\mathbb{V}}F(x,\bar{u}_1(x-x^1_{n}))\,d\mu]\\&=&\frac{1}{2}(\|\bar{u}^+_1\|^2
-\|\bar{u}^-_1\|^2)-\int_{\mathbb{V}}F(x,\bar{u}_1)\,d\mu\\&=&J_0(\bar{u}_1).
\end{eqnarray*}
Hence we obtain that
\begin{eqnarray*}
\begin{array}{ll}
J_\rho(z_n)=c_\rho-J_\rho(\bar{u}_0)-J_0(\bar{u}_1)+o(1),\\
J'_\rho(z_n)=o(1),\qquad \text{in~}X.
\end{array}
\end{eqnarray*}
This implies that $\{z_n\}$ is a Palais-Smale sequence of $J_{\rho}$ at level $c_\rho-J_{\rho}(\bar{u}_0)-J_0(\bar{u}_1)$.


 For $\{z_{n}\}$,
if the vanishing case occurs for $\|z_n\|_{\infty}$, by similar arguments as in {\bf Case 1}, we obtain that $z_n\rightarrow 0$ in $X$,
and the proof ends with $k=1$.

If the non-vanishing occurs for $\|z_n\|_{\infty}$, by analogous discussions as in {\bf Case 2}, there exists a sequence $\{x_n^2\}\subset\mathbb{V}$ such that
$|z_n(x_n^2)|\geq\frac{\delta}{2}>0$. If we denote $u_{n,2}(x)=z_n(x+x_n^2)$ and assume that
\begin{eqnarray*}\label{3.58}
u_{n,2}\rightharpoonup \bar{u}_{2}, \qquad \text{in}~X,\qquad\text{and}\qquad
u_{n,2}\rightarrow \bar{u}_{2}, \qquad \text{pointwise~in}~\mathbb{V}.
\end{eqnarray*}
Then one gets that $|\bar{u}_2(0)|> 0$. This implies that $|x_n^2|\rightarrow+\infty$. In fact, we also have that $|x_n^2-x_n^1|\rightarrow+\infty$ as $n\rightarrow+\infty$. By contradiction, assume that $\{x^{2}_n-x^{1}_n\}$ is bounded in $\mathbb{V}$. Thus there exists a point $x_{0}\in\mathbb{V}$ such that $(x^{2}_n-x^{1}_n)\rightarrow x_0$ as $n\rightarrow+\infty.$ For $x_0\in \mathbb{V}$, it follows from (\ref{3.46}) that
\begin{eqnarray*}\label{7-18}
u_{n,2}(x_0+x^{1}_n-x^{2}_n)=u_{n,1}(x_0)-\bar{u}_{1}(x_0).
\end{eqnarray*}
Since $(x^{2}_n-x^{1}_n)\rightarrow x_0$ as $n\rightarrow+\infty$ and $|\bar{u}_{2}(0)|>0$, one sees that $u_{n(2)}(x_0-x^{2}_n+x^{1}_n)\not\rightarrow 0$ in the left hand side of the above equality. While the right hand side tends to zero as $n\rightarrow+\infty$. We get a contradiction.

Since $\langle J'_{\rho}(z_n),\phi(x-x^2_{n})\rangle=o(1)$, similar arguments to (\ref{bb}), we can prove that $\langle J'_{0}(\bar{u}_{2}),\phi\rangle=0$, and hence $\bar{u}_2$ is a nontrivial critical point of $J_0$.

Let
\begin{eqnarray*}\label{3.60}
w_n(x)=u_n(x)-\bar{u}_0(x)-\bar{u}_1(x-x_n^1)-\bar{u}_{2}(x-x_n^2).
\end{eqnarray*}
Then we have that
\begin{eqnarray*}\label{3.62}
w_n\rightharpoonup 0, \qquad \text{in}~X,\qquad\text{and}\qquad
w_n\rightarrow 0, \qquad \text{pointwise~in}~\mathbb{V}.
\end{eqnarray*}
Note that $z_n(x)=\bar{u}_2(x-x^2_n)+w_n(x)$, similarly, we have that
\begin{eqnarray*}
\begin{array}{ll}
\|u_n\|^{2}=\|\bar{u}_0\|^{2}+\|\bar{u}_1\|^{2}+\|\bar{u}_2\|^{2}+\|w_n\|^{2}+o(1),\\
J_{\rho}(w_n)=c_\rho-J_{\rho}(\bar{u}_0)-J_{0}(\bar{u}_1)-J_{0}(\bar{u}_2)+o(1),\\
J'_{\rho}(w_n)=o(1).
\end{array}
\end{eqnarray*}
We repeat the process again. We claim that the iterations must stop after finite steps.

We only need to prove that, for any $u\neq0$ with $J'_{0}(u)=0$,  there exist $\varepsilon_1>0$ and $\varepsilon_2>0$ such that $\|u\|\geq \varepsilon_1$ and $J_0(u)\geq\varepsilon_2$.

In fact, for $u\neq0$ satisfying $\langle J'_{0}(u),u^+\rangle=0$, by (F2), (F3) and the fact $\|u\|_{p}\leq\|u\|_{2}\leq C\|u\|$ for $p>2$, we have that for any $\varepsilon>0$, there is a constant $C_1>0$ such that
\begin{eqnarray*}
\|u^+\|^{2}=\int_{\mathbb{V}}f(x,u)u^+\,d\mu\leq\varepsilon\|u^+\|\|u\|+C_1\|u^+\|\|u\|^{p-1}.
\end{eqnarray*}
Analogously, for $\langle J'_{0}(u),u^-\rangle=0$, we get a constant $C_2>0$ such that
\begin{equation}\label{cb}
\|u^-\|^{2}=-\int_{\mathbb{V}}f(x,u)u^-\,d\mu\leq\varepsilon\|u^-\|\|u\|+C_2\|u^-\|\|u\|^{p-1}.
\end{equation}
The above two inequalities yield that
\begin{eqnarray*}
\|u\|^{2}\leq2\varepsilon\|u\|^2+2\max \{C_1, C_2\}\|u\|^{p}.
\end{eqnarray*}
Let $\varepsilon$ be small enough, then there exists $\varepsilon_1>0$  such that $\|u\|\geq \varepsilon_1$.

By Lemma \ref{lm-2.4} and (\ref{cb}), we get that
\begin{eqnarray*}
J_0(u)&=&\frac{1}{2}\|u^+\|^2-\frac{1}{2}\|u^-\|^2-\int_{\mathbb{V}}F(x,u)\,d\mu\\&=&\frac{1}{2}\|u\|^2-\|u^-\|^2-\int_{\mathbb{V}}F(x,u)\,d\mu\\&\geq&
\frac{1}{2}\|u\|^2-\varepsilon\|u\|^2-C\|u\|^{p}-\varepsilon\|u\|^2-C\|u\|^{p}\\&=&\frac{1}{2}\|u\|^2-\varepsilon\|u\|^2-C\|u\|^{p}
\end{eqnarray*}
Since $p>2$, there exists $\varepsilon_2>0$ small enough such that $J_0(u)\geq\varepsilon_2>0.$

The proof is completed.
\end{proof}

\section{Proofs of Theorem \ref{thm-1.1} and Theorem \ref{thm-1.2}}
In this section, we are devoted to prove Theorem \ref{thm-1.1} and Theorem \ref{thm-1.2}.

\
\

{\bf Proof of Theorem \ref{thm-1.1}}:
It follows from Theorem \ref{prop-2.2} and Lemma \ref{lm-3.2} that there exists a bounded Cerami sequence $\{u_n\}$ of $J_{\rho}$ at level $c_\rho>0$ in $X$. If $\rho=0$, by
Theorem \ref{prop-2.2},
 we obtain that
$$
\inf\limits_{N_0} J_{0}\geq c_0>0.
$$
Lemma \ref{lm-4.3} implies that there is a nontrivial critical point $u_0\in N_0$ of $J_{0}$ such that $J_{0}(u_0)=c_0$. Hence $u_0$ is a ground state solution of $J_{0}$, i.e. $J_{0}(u_0)=\inf\limits_{N_0} J_{0}$. In the following, we assume that $0<\rho<\frac{\tilde{\rho}^+}{\kappa}$ and consider
$$
M(u_0)=\{u=tu_0+v\in X| v\in X^-, \|u\|\leq R(u_0),t\geq 0   \}\subset\R^+ u_0^+ \oplus X^-.
$$
For $u_n=t_nu_0+v_n\in M(u_0)$, let $u_n\rightharpoonup u=t_0u_0+v_0$ in $X$.
Passing to  a subsequence if necessary, we may assume that
\begin{equation}\label{r1}
t_n\rightarrow t_0,\quad\text{in}~\R^+,\quad v_n\rightharpoonup v_{0},\quad \text{in}~ X^-,\quad
v_n\rightarrow v_0,\quad \text{pointwise\, in}\quad\mathbb{V}.
\end{equation}
Then we have that $u\in M(u_0)$, which implies that $M(u_0)$ is weakly closed. By (\ref{r1}) and the Fatou lemma, we can prove that $J_{\rho}$ is weakly upper semicontinuous. Then $J_{\rho}$
attains its maximum in $M(u_0)$, namely, there exists $t_0u_0+v_0\in M(u_0)$ such that
$$
J_{\rho}(t_0u_0+v_0)\geq J_{\rho}(w)
$$
for any $w\in M(u_0)$. By Lemma \ref{lm-2.7}, we have that $J_{\rho}(t_0u_0+v_0)>0$ and $t_0u_0+v_0\neq 0$.
Define $h(u,s)=u$ for $u\in M(u_0)$ and $s\in [0 ,1]$. Note that (h1)-(h4) in Theorem \ref{prop-2.2} are satisfied, that is $h\in \Gamma (u_0)$. Then by Theorem \ref{prop-2.2} and Lemma \ref{lm-2.8},
we have that
\begin{equation}\label{3.66}
c_0=J_{0}(u_0)\geq J_{0}(t_0u_0+v_0)> J_{\rho}(t_0u_0+v_0)=\max\limits_{u\in M(u_0)} J_{\rho}(h(u,1))\geq c_\rho.
\end{equation}
Then it follows from Lemma \ref{lm-4.3} that $k=0$ and $J_{\rho}(\bar{u}_0)=c_\rho>0$, that is $\bar{u}_0$ is a nontrivial critical point of $J_{\rho}$. By Theorem \ref{prop-2.2} again, we get that $c_{\rho}=\inf\limits_{N_\rho}J_{\rho}.$

\

\
In order to prove Theorem \ref{thm-1.2}, we first prove a crucial lemma for the relation  between $c_\rho$ and $ c_0$ as $\rho\rightarrow 0^+$.

\begin{lm}\label{vc}
 Let $0\leq\rho< \frac{\tilde{\rho}^+}{\kappa}$. Assume that (H) and (F1)-(F5) hold. If $u_{\rho}$ and $u_0$ are the ground state solutions of $J_\rho$ and $J_0$, then we have that
$$\lim\limits_{\rho\rightarrow 0^+}c_\rho=c_0.$$
 \end{lm}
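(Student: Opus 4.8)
The plan is to establish the two inequalities $\limsup_{\rho\to 0^+} c_\rho \leq c_0$ and $\liminf_{\rho\to 0^+} c_\rho \geq c_0$ separately, using the variational characterization of $c_\rho$ from Theorem \ref{prop-2.2} together with the fact that $c_\rho = \inf_{N_\rho} J_\rho$ once Theorem \ref{thm-1.1} has been proved, and the monotonicity of $\rho \mapsto J_\rho$. Observe first that for $u \in X$, since $\frac{\rho}{|x|^2+1}|u|^2 \geq 0$, we have $J_{\rho}(u) \leq J_0(u)$ whenever $\rho \geq 0$, and more generally $J_{\rho_1}(u) \leq J_{\rho_2}(u)$ when $\rho_1 \geq \rho_2 \geq 0$. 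This monotonicity, fed into the $\inf\inf\sup$ formula for $c_\rho$ over the classes $M(u)$ and $\Gamma(u)$ (which do not depend on $\rho$), immediately gives that $\rho \mapsto c_\rho$ is non-increasing, hence $c_\rho \leq c_0$ for all $\rho \in [0, \tilde\rho^+/\kappa)$, so $\limsup_{\rho\to 0^+} c_\rho \leq c_0$.

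For the reverse inequality, I would take a sequence $\rho_n \to 0^+$ and let $u_{\rho_n} \in N_{\rho_n}$ be the ground state solutions, so $J_{\rho_n}(u_{\rho_n}) = c_{\rho_n} \leq c_0$ and $J'_{\rho_n}(u_{\rho_n}) = 0$. Since $c_{\rho_n}$ is bounded above (by $c_0$) and $(1+\|u_{\rho_n}\|)J'_{\rho_n}(u_{\rho_n}) = 0 \to 0$, Lemma \ref{lm-3.2} applies with the uniform bound $\rho_n \leq \rho$ for some fixed $\rho < \tilde\rho^+/\kappa$, giving that $\{u_{\rho_n}\}$ is bounded in $X$. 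Now the key point: because $\rho_n \to 0$ and $\{u_{\rho_n}\}$ is bounded, the Hardy term $\int_{\mathbb V} \frac{\rho_n}{|x|^2+1}|u_{\rho_n}|^2\,d\mu \to 0$ (using the Hardy inequality \eqref{ff} and boundedness), and likewise the term $\int_{\mathbb V}\frac{\rho_n}{|x|^2+1}u_{\rho_n}\phi\,d\mu \to 0$; therefore $J_0(u_{\rho_n}) = c_{\rho_n} + o(1)$ and $J'_0(u_{\rho_n}) = J'_{\rho_n}(u_{\rho_n}) + o(1) = o(1)$ in $X$. Thus $\{u_{\rho_n}\}$ is a bounded Palais--Smale sequence for $J_0$ at level $\lim c_{\rho_n}$ (after passing to a subsequence so that $c_{\rho_n}$ converges).

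Next I would apply the decomposition Lemma \ref{lm-4.3} to this bounded Palais--Smale sequence of $J_0$ at level $c^* := \lim_n c_{\rho_n}$: there exist $\bar u_0, \bar u_1, \dots, \bar u_k$ with $J_0'(\bar u_i) = 0$, $\bar u_i \neq 0$ for $i \geq 1$, and $c^* = J_0(\bar u_0) + \sum_{i=1}^k J_0(\bar u_i)$. The decomposition must be nontrivial, i.e. not all $\bar u_i$ are zero: if it were, then $u_{\rho_n} \to 0$ in $X$, forcing $c^* = 0$; but $c_{\rho_n} = J_{\rho_n}(u_{\rho_n}) \geq m > 0$ uniformly (from (A3), valid for all the $\rho_n$ since $\rho_n < \tilde\rho^+$), a contradiction. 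So at least one $\bar u_i$ is a nontrivial critical point of $J_0$. Moreover each nontrivial critical point $w$ of $J_0$ lies in $N_0$ (one checks $\langle J_0'(w), v\rangle = 0$ for $v \in X^-$ and $\langle J_0'(w), w\rangle = 0$, and $w \notin X^-$ because $J_0$ is negative definite on $X^-$ so a nonzero element of $X^-$ cannot be a critical point — indeed for $w \in X^-\setminus\{0\}$, $\langle J_0'(w), w\rangle = -\|w\|^2 - \int f(x,w)w \leq -\|w\|^2 < 0$ using \eqref{30}), hence $J_0(\bar u_i) \geq \inf_{N_0} J_0 = c_0$ for each $i$ with $\bar u_i \neq 0$, and $J_0(\bar u_0) \geq 0$ whether or not $\bar u_0$ vanishes (by the estimate $J_0(u) \geq \varepsilon_2 > 0$ on nontrivial critical points shown at the end of Lemma \ref{lm-4.3}, or $J_0(0) = 0$). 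Therefore $c^* = J_0(\bar u_0) + \sum_{i\geq 1} J_0(\bar u_i) \geq c_0$. Combining, $\lim_n c_{\rho_n} \geq c_0$, and since this holds for every sequence $\rho_n \to 0^+$, we get $\liminf_{\rho \to 0^+} c_\rho \geq c_0$. Together with the first part this yields $\lim_{\rho\to 0^+} c_\rho = c_0$.

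The main obstacle, and the step needing the most care, is justifying that the vanishing of all the $\bar u_i$ is impossible and that each surviving $\bar u_i$ contributes at least $c_0$ — this is where one must invoke that $c_{\rho_n} \geq m > 0$ uniformly in $n$ and that every nonzero critical point of $J_0$ sits on the Nehari--Pankov manifold $N_0$ with energy at least $c_0 = \inf_{N_0} J_0$. A secondary technical point is verifying cleanly that the Hardy perturbation terms vanish in the limit along the bounded sequence; this is routine given \eqref{ff} and the uniform bound on $\|u_{\rho_n}\|$, but should be stated. Everything else is bookkeeping with the already-established lemmas.
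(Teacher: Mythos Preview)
Your argument is essentially correct and takes a genuinely different route from the paper's for the hard direction. One small inaccuracy: the class $\Gamma(u)$ \emph{does} depend on $\rho$, since condition (h3) reads $J_\rho(u)\ge J_\rho(h(u,t))$. So the blanket monotonicity-through-the-min-max step is not quite right as stated. The fix is immediate, though: take $h(u,s)=u$, which lies in every $\Gamma_\rho(u)$, and take $u=u_0$ the ground state of $J_0$; then by Lemma~\ref{lm-2.8}, $\sup_{M(u_0)}J_\rho\le \sup_{M(u_0)}J_0\le J_0(u_0)=c_0$, yielding $c_\rho\le c_0$. (This is exactly how the paper obtains $c_\rho<c_0$ in the proof of Theorem~\ref{thm-1.1}; alternatively the first display of the paper's proof of the present lemma gives the same inequality via Nehari projection.)

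For the converse inequality $\liminf c_{\rho_n}\ge c_0$, your approach differs from the paper's. The paper projects each ground state $u_\rho$ onto $N_0$ via Corollary~\ref{lm-5.0}, obtaining $t u_\rho+v\in N_0$ and the estimate $c_\rho\ge c_0-\tfrac12\int\frac{\rho}{|x|^2+1}|tu_\rho+v|^2$; the work then lies in proving the Hardy remainder vanishes, which requires showing the projection parameters $t_n,v_n$ stay bounded (via a Lions-type nonvanishing argument plus (F4)). You instead observe that $\{u_{\rho_n}\}$ is a bounded Palais--Smale sequence for $J_0$ at level $c^*=\lim c_{\rho_n}$ (this is clean: the Hardy correction is $O(\rho_n)$ uniformly on bounded sets by \eqref{ff}), and then feed it into the splitting Lemma~\ref{lm-4.3} with $\rho=0$; since $c^*\ge m>0$ rules out total vanishing, at least one profile is a nontrivial critical point of $J_0$, hence lies in $N_0$ with energy $\ge c_0$, and the remaining profiles contribute nonnegatively. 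Your route trades the ad hoc boundedness-of-projection argument for a direct appeal to the already-proved decomposition lemma; it is shorter and more structural, at the cost of invoking heavier machinery. Both are valid.
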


\begin{proof}
Let $u_0\in N_0$ be a ground state solution of $J_0$.  By Corollary \ref{lm-5.0}, there exist $t'>0$ and $v'\in X^-$ such that $t'u_0+v'\in N_\rho$. Then it follows from Lemma \ref{lm-2.8} that
\begin{eqnarray*}
c_0 &= &
J_{0}(u_0)\geq J_{0}(t'u_0+v')=J_{\rho}(t'u_0+v')+\frac{1}{2}\int_{\mathbb{V}}\frac{\rho}{(|x|^2+1)}|t'u_0+v'|^2\,d\mu\\
& \geq & c_\rho+\frac{1}{2}\int_{\mathbb{V}}\frac{\rho}{(|x|^2+1)}|t'u_0+v'|^2\,d\mu,
\end{eqnarray*}
which implies that
\begin{equation}\label{vv}
c_0\geq c_\rho.
\end{equation}
Let $u_\rho \in N_\rho$ be a ground state solution of $J_{\rho}$. Similarly, there exist $t>0$ and $v\in X^-$
such that $tu_\rho+v\in N_0$, and hence
\begin{eqnarray}\label{4.6}
c_\rho &= &
J_{\rho}(u_\rho)\geq J_{\rho}(tu_\rho+v)=J_{0}(tu_\rho+v)-\frac{1}{2}\int_{\mathbb{V}}\frac{\rho}{(|x|^2+1)}|tu_\rho+v|^2\,d\mu\nonumber\\
& \geq & c_0-\frac{1}{2}\int_{\mathbb{V}}\frac{\rho}{(|x|^2+1)}|tu_\rho+v|^2\,d\mu.
\end{eqnarray}

We claim that as $\rho\rightarrow0^+$,
\begin{equation}\label{vx}
\int_{\mathbb{V}}\frac{\rho}{(|x|^2+1)}|t u_\rho+v|^2\,d\mu\rightarrow0.
\end{equation}
Then the result of this lemma follows from (\ref{vv}), (\ref{4.6}) and (\ref{vx}).

Now we prove (\ref{vx}). In fact, by Theorem \ref{prop-2.2} and Lemma \ref{lm-3.2}, we have that $\{u_\rho\}$ is bounded if $\rho\rightarrow 0^+$. Take any sequence $\rho_n\rightarrow 0^+$ such that
$\rho_n\leq \rho<\frac{\tilde{\rho}^+}{\kappa}$ and let $u_n=u_{\rho_n}$.

We first prove that there is a sequence $\{y_{n}\}\subset\mathbb{V}$ such that $$|u_n^+(y_{n})|>0.$$
Otherwise by Lemma \ref{88}, we get that $u_n^+\rightarrow 0$ in $\ell^t(\mathbb{V})$ for $t>2$. Since $u_n\in N_{\rho_n}$, by (\ref{nn}) and the H\"{o}lder inequality, we have that
$$
\| u_n^+ \|^2=\int_{\mathbb{V}}\frac{\rho_n}{(|x|^2+1)}u_n u_n^+\,d\mu+\int_{\mathbb{V}}f(x, u_n)u_n^+\,d\mu\rightarrow 0,\quad n\rightarrow\infty.
$$
Hence $\limsup\limits_{n\rightarrow\infty} J_{\rho_n}(u_n)\leq 0$. However, for sufficiently small $r>0$,
\begin{eqnarray*}\label{4.6.0}
J_{\rho_n}(u_n)\geq J_{\rho_n}(\frac{r}{\| u_n^+ \|}   u_n^+)\geq \inf\limits_{n\in \N} \inf\limits_{u\in X^+: \|u\|=r} J_{\rho_n}(u)>0.
\end{eqnarray*}
This yields a contradiction.

Then passing to a subsequence if necessary, there exists $u\in X$ with $u^+(0)\neq0$ such that
\begin{equation}\label{4.10.0}
u_n(x+y_n)\rightharpoonup u(x), \quad\text{ in } X, \qquad\text{and}\qquad
u_n(x+y_n)\rightarrow u(x), \quad\text{  pointwise~in } \mathbb{V}.
\end{equation}


Denote $\tilde{u}_n(x)=u_n(x+y_n)$, let $t_n\tilde{u}_n+\tilde{v}_n\in N_{0}$ with $t_n>0$, $\tilde{v}_n(x)=v_n(x+y_n)\in X^-$. By (\ref{30}), we have that
\begin{eqnarray}\label{4.12}
\| \tilde{u}_n^+ \|^2 &= &
\| \tilde{u}_n^-+\frac{\tilde{v}_n}{t_n} \|^2+\frac{1}{t_n^2}\int_{\mathbb{V}}f(x, t_n\tilde{u}_n+\tilde{v}_n)(t_n\tilde{u}_n+\tilde{v}_n)\,d\mu\nonumber\\
& \geq & \| \tilde{u}_n^-+\frac{\tilde{v}_n}{t_n} \|^2+2\int_{\mathbb{V}}  \frac{F(x, t_n(\tilde{u}_n+ \frac{\tilde{v}_n}{t_n} ))}{t_n^2}  \,d\mu,
\end{eqnarray}
which means that $\|  \tilde{u}_n^-+ \frac{\tilde{v}_n}{t_n}  \|$ is bounded. We may assume that $\tilde{u}_n^-(x)+ \frac{\tilde{v}_n(x)}{t_n} \rightarrow v(x)$
 pointwise in $\mathbb{V}$ for some $v\in X^-$. If $t_n\rightarrow+\infty$, then $|t_n\tilde{u}_n+\bar{v}_n|=t_n | \tilde{u}^+_n+(\tilde{u}_n^-+\frac{\tilde{v}_n}{t_n}) |\rightarrow+\infty$ since  $u^+(x)+v(x)\neq 0$. By the Fatou lemma and (F4), we obtain that
$$
\int_{\mathbb{V}} \frac{F(x, t_n(\tilde{u}_n+ \frac{\tilde{v}_n}{t_n} )}{t_n^2|\tilde{u}_n+\frac{\tilde{v}_n}{t_n}|^2} |\tilde{u}_n+\frac{\tilde{v}_n}{t_n}|^2\,d\mu\rightarrow+\infty,
$$
which contradicts (\ref{4.12}). Therefore $\{t_n\}$ is bounded.  As a consequence,
$\|t_n \tilde{u}_n^+\|$ and $\|t_n \tilde{u}_n^-+\tilde{v}_n\|$ are bounded. Then by the Hardy inequality (\ref{ff}),
\begin{eqnarray*}
\frac{1}{2}\int_{\mathbb{V}}\frac{\rho_n}{(|x|^2+1)}|t_n \tilde{u}_n+\tilde{v}_n|^2\,d\mu\rightarrow0~ \text{as}~n\rightarrow+\infty.
\end{eqnarray*}

\end{proof}

{\bf Proof of Theorem \ref{thm-1.2}}:
Let $\{u_n\}$ be a sequence of ground state solutions of $J_{\rho_n}$. By similar arguments as in Lemma \ref{vc}, we can find a sequence $\{x_{n}\}\subset\mathbb{V}$ such that $|u_n^+(x_{n})|>0.$  Passing to a subsequence if necessary, there exists $u\in X$ with $u^+(0)\neq0$ such that
\begin{equation}\label{fq}
u_n(x+x_n)\rightharpoonup u(x), \quad\text{ in } X, \qquad\text{and}\qquad
u_n(x+x_n)\rightarrow u(x), \quad\text{  pointwise~in } \mathbb{V}.
\end{equation}

For $(x,u)\in \mathbb{V}\times \R$, we define
$$
G(x,u)=\frac{1}{2}f(x,u)u-F(x,u).
$$
Note that $\rho_n\rightarrow 0$ as $n\rightarrow\infty.$  Hence for any $\phi\in X$, 
\begin{eqnarray*}
\langle J'_0(u_n(x+x_n)),\phi\rangle =\langle J'_{\rho_n}(u_n(x)),\phi(x-x_n)\rangle +\int_{\mathbb{V}}\frac{\rho_n}{(|x|^2+1)}u_n(x)\phi(x-x_n)\,d\mu
\rightarrow 0.
\end{eqnarray*}
Similar to the proof of (ii) in Lemma \ref{lmaa}, we get that $\langle J'_0(u_n(x+x_n)),\phi\rangle \rightarrow \langle J'_0(u),\phi\rangle$.
Then $u$ is a nontrivial critical point of $J_0$, and hence $u\in \mathcal{N}_0$. By Lemma \ref{vc} and the Fatou lemma, we have that
\begin{eqnarray}\label{4.20}
c_0 &= &
\liminf\limits_{n\rightarrow\infty} J_{\rho_n}(u_n)=\liminf\limits_{n\rightarrow\infty} (J_{\rho_n}(u_n)-\frac{1}{2}\langle J'_{\rho_n}(u_n),u_n\rangle)\nonumber\\
& = & \liminf\limits_{n\rightarrow\infty} \int_{\mathbb{V}} G(x,u_n)\,d\mu=\liminf\limits_{n\rightarrow\infty} \int_{\mathbb{V}} G(x,u_n(x+x_n))\,d\mu\nonumber\\ &\geq& \int_{\mathbb{V}} G(x,u)\,d\mu
= J_0(u)\geq c_0.
\end{eqnarray}
This implies that $u$ is a ground state solution of $J_0$.

Let us denote $w_n(x)=u_n(x+x_n)$ and observe that
\begin{eqnarray*}\label{4.22}
\int_{\mathbb{V}}G(x,w_n)-G(x,w_n-u)\,d\mu &= &
\int_{\mathbb{V}} \int_0^1 \frac{d}{dt} G(x, w_n-u+tu)\,dt\,d\mu\nonumber\\
& = & \int_0^1  \int_{\mathbb{V}}g(x, w_n-u+tu)u\,d\mu \,dt,
\end{eqnarray*}
where $g(x,s)=\frac{\partial}{\partial s}G(x,s)$ for $s\in \R$ and $x\in\mathbb{V}$. Since $\{w_n-u+tu\}$ is bounded in $X$, by (F6) and (\ref{nn}), we can prove that the family $\{g(x, w_n-u+tu)u\}$ is uniformly summable and tight over $\mathbb{V}$.
In addition, note that $g(x, w_n-u+tu)u\rightarrow g(tu)u$  pointwise in $\mathbb{V}$,
then  by the Vitali convergence theorem, we get that
$g(x, tu)u$ is summable and
$$
\int_{\mathbb{V}}g(x, w_n-u+tu)u\,d\mu\rightarrow \int_{\mathbb{V}}g(x, tu)u\,d\mu, \quad n\rightarrow+\infty.
$$
Then we have that
$$
\int_{\mathbb{V}}G(x,w_n)-G(x,w_n-u)\,d\mu\rightarrow\int_0^1 \int_{\mathbb{V}} g(x, tu)u\,d\mu\,dt=\int_{\mathbb{V}}G(x,u)\,d\mu, \quad n\rightarrow\infty.
$$
Combined with (\ref{4.20}), we get that $\lim\limits_{n\rightarrow\infty} \int_{\mathbb{V}}G(x,w_n-u)\,d\mu=0.$
By (F6), we have that $w_n\rightarrow u$ in $\ell^q(\mathbb{V})$ with $2<q\leq p$. Note that $\{w_n\}$ is bounded in $\ell^2(\mathbb{V})$, and hence in $\ell^{\infty}(\mathbb{V})$.
By interpolation inequality, for $p<t<+\infty$,
$$
\| w_n-u\|_{t}^t\leq \| w_n-u\|_{p}^p \| w_n-u\|_{\infty}^{t-p}\rightarrow 0.
$$
Hence, one has that
$w_n\rightarrow u$ in $\ell^t(\mathbb{V})$ for $2<t<+\infty$. Combined with $\rho_n\rightarrow 0$ as $n\rightarrow\infty.$, we get that
\begin{eqnarray*}
\| w_n^+ - u^+ \|^2 &= &
\langle J'_{\rho_n}(u_n),(w_n^+ - u^+)(x-x_n)\rangle-(u^+,  w_n^+ - u^+ )\nonumber\\
&  & +\int_{\mathbb{V}}\frac{\rho_n}{(|x|^2+1)}u_n( w_n^+ - u^+)(x-x_n)\,d\mu+\int_{\mathbb{V}}f(x, w_n)(w_n^+ - u^+)\,d\mu\nonumber\\
& \rightarrow&0,
\end{eqnarray*}
\begin{eqnarray*}
\| w_n^- - u^- \|^2 &= &
\langle J'_{\rho_n}(u_n),(w_n^- - u^-)(x-x_n)\rangle-(u^-,  w_n^- - u^-)\nonumber\\
&  & -\int_{\mathbb{V}}\frac{\rho_n}{(|x|^2+1)}u_n( w_n^- - u^-)(x-x_n)\,d\mu-\int_{\mathbb{V}}f(x, w_n)(w_n^- - u^-)\,d\mu\nonumber\\
& \rightarrow&0.
\end{eqnarray*}
Therefore, $\| w_n- u\|^2=\| w_n^+ - u^+ \|^2+\| w_n^- - u^- \|^2\rightarrow0$, as $n\rightarrow+\infty$, which means that $w_n\rightarrow u$ in $X$.
\qed

\
\section{Acknowledgements}
The author would like to thank Bobo Hua and Tao Zhang for helpful discussions and suggestions.

\end{document}